\DeclareMathAlphabet{\pazocal}{OMS}{zplm}{m}{n}
\newcommand{\beq}{\begin{equation}}
\newcommand{\eeq}{\end{equation}}
\newcommand{\mb}[1]{\ensuremath{\mathbb{#1}}}
\newcommand{\N}{\mb{N}}
\newcommand{\R}{\mb{R}}
\newcommand\Rn{{\mathbb R}^n}
\newcommand{\C}{\mb{C}}
\newcommand{\Dx}{\langle D_x \rangle}
\newcommand{\eps}{\varepsilon}
\newcommand{\lara}[1]{\langle #1 \rangle}
\newcommand{\tbf}{\textbf}
\newcommand{\pac}{\pazocal}
\newtheorem{theorem}{Theorem}[section]
\newtheorem{lemma}[theorem]{Lemma}
\newtheorem{proposition}[theorem]{Proposition}
\newtheorem{corollary}[theorem]{Corollary}
\newtheorem{remark}{Remark}[section]
\newtheorem{definition}{Definition}[section]
\newtheorem{example}{Example}[section]
\DeclareMathOperator{\adj}{\textbf{adj}}
\DeclareMathOperator{\cof}{\textbf{cof}}
\DeclareMathOperator{\ima}{Im}
\DeclareMathOperator{\diag}{diag}
\DeclareMathOperator{\tr}{tr}
\title[Hyperbolic systems with multiplicities]{Well--posedness of hyperbolic systems with multiplicities and smooth coefficients}
\author[Claudia Garetto]{Claudia Garetto}
\address{
	Department of Mathematical Sciences
	\endgraf
	Loughborough University
	\endgraf
	Loughborough, Leicestershire, LE11 3TU
	\endgraf
	United Kingdom
	\endgraf
	{\it E-mail address} {\rm c.garetto@lboro.ac.uk}
}
\author[Christian J\"ah]{Christian J\"ah}
\address{
	Department of Mathematical Sciences
	\endgraf
	Loughborough University
	\endgraf
	Loughborough, Leicestershire, LE11 3TU
	\endgraf
	United Kingdom
	\endgraf
	{\it E-mail address} {\rm c.jaeh@lboro.ac.uk}
}
\thanks{The first author was partially and the second author fully supported by the EPSRC grant EP/L026422/1}
\date{}
\subjclass[2010]{Primary 35G10; 35L30; Secondary 46F05;}
\keywords{Hyperbolic systems, coalecing eigenvalues, well-posedness, Gevrey spaces}
\begin{document}

\maketitle

\begin{abstract}
	We study hyperbolic systems with multiplicities and smo\-oth coefficients. In the case of non-analytic, smooth coefficients, we prove well-posedness in any Gevrey class and when the coefficients are analytic, we prove $C^\infty$ well-posedness.  The proof is based on a transformation to block Sylvester form introduced by D'Ancona and Spagnolo in \cite{DS} which increases the system size but does not change the eigenvalues. This reduction introduces lower order terms for which appropriate Levi-type conditions are found. These translate then into conditions on the original coefficient matrix. This paper can be considered as a generalisation of \cite{GR}, where weakly hyperbolic higher order equations with lower order terms were considered.
\end{abstract}

\setcounter{tocdepth}{1}
\tableofcontents
 
\section{Introduction}

We consider the Cauchy problem \begin{equation} \label{CauchyP} 
\left\{ \begin{array}{ll}
& D_t u - A(t,D_x)u = 0,~(t,x) \in [0,T] \times \R^n,\\
& \left. u \right|_{t=0} = u_0,~x \in \R^n,
\end{array} \right.
\end{equation} 
where $D_t=-{\rm i}\partial_t$, $D_x=-{\rm i}\partial_x$, and $A(t,D_x)$ is an $m \times m$ matrix of first-order differential operators with time-dependent coefficients and $u$ is a column vector with components $u_1$, $\dots$, $u_m$. We assume that \eqref{CauchyP} is hyperbolic, whereby we mean that the matrix $A(t,\xi)$ has only real eigenvalues. These eigenvalues, rescaled to order $0$ by multiplying by $\lara{\xi}^{-1}$, will be denoted by  $\lambda_1(t,\xi),\dots,\lambda_m(t,\xi)$. Following Kinoshita and Spagnolo in \cite{KS}, we assume throughout this paper that there exists a positive constant $C$ such that \begin{equation} \label{eq:CondKinSp} 
\lambda_i^2(t,\xi) + \lambda_j^2(t,\xi) \leq C (\lambda_i(t,\xi) - \lambda_j(t,\xi))^2,~(t,\xi) \in [0,T] \times \R^n
\end{equation} for all $1 \leq i < j \leq m$.

As observed in \cite{GR:15} combining the well-posedness results in \cite{KY:06,Yu:05} we already know that the Cauchy problem \eqref{CauchyP} is well-posed in the Gevrey class $\gamma^s$, with 
\[
1\le s<1+\frac{1}{m}
\]
as well as in the corresponding spaces of (Gevrey-Beurling) ultradistributions. In this paper we want to prove that when $A(t,D_x)$ has smooth coefficients and the condition \eqref{eq:CondKinSp} on the eigenvalues holds, then the Gevrey well-posedness result above can be extended to any $s \geq 1$. Since, by the results of Kajitani and Yuzawa when $s\ge 1+\frac{1}{m}$ at least an ultradistributional solution to the Cauchy problem \eqref{CauchyP} exists, we will prove that this solution does actually belong to the Gevrey class $\gamma^s$. In the case of analytic coefficients, we will prove instead that the Cauchy problem \eqref{CauchyP} is $C^\infty$ well-posed.

In this paper we assume that the Gevrey classes $\gamma^s(\Rn)$ are well-known: these are spaces of all
 $f\in C^\infty(\R^n)$ such that for every compact set $K\subset\R^n$ there
exists a constant $C>0$ such that for all $\beta\in\mathbb N_0^n$ we have the estimate
\[
\sup_{x\in K}|\partial^\beta f(x)|\le C^{|\beta|+1}(\beta!)^s.
\]
For $s=1$, we obtain the class of analytic functions.
We refer to 
\cite{GarRuz:1} for a detailed discussion and Fourier characterisations of Gevrey spaces of different types and the definition of the corresponding spaces of ultradistributions. 

The well-posedness of hyperbolic equations and systems with multiplicities has been a challenging problem for a long time. In the last decades several results have been obtained for scalar equations with $t$-dependent coefficients (\cite{ColKi:02,ColKi:02-2,COr,CDS,dAKi:05,GarRuz:1,GR,GarRuz:3,KS}, to quote a few)  but the research on hyperbolic systems with multiplicities has not been as successful. We mention here the work of D'Ancona, Kinoshita and Spagnolo \cite{dAKi:04} on weakly hyperbolic systems (i.e. systems with multiplicities) of size $2\times 2$ and $3\times 3$ with H\"older dependent coefficients later generalised to any matrix size by Yuzawa in \cite{Yu:05} and to $(t,x)$-dependent coefficients by  Kajitani and Yuzawa in \cite{KY:06}. In all these papers, well-posedness is obtained in Gevrey classes of a certain order depending on the regularity of the coefficients and the system size. Systems of this type have recently also been investigated in \cite{G:15,GR:15}.

It is a natural question to ask if under stronger assumptions on the regularity of the coefficients, for instance smooth or analytic coefficients, the well-posedness of the corresponding Cauchy problem could be improved, in the sense if one could get well-posedness in every Gevrey class or $C^\infty$--well-posedness. It is known that this is possible for scalar equations under suitable assumptions on the multiple roots and Levi conditions on the lower order terms, see \cite{GR,KS} for $C^k$ and $C^\infty$ coefficients and \cite{GR,JT,KS} for analytic coefficients. This paper gives a positive answer to this question by extending the results for scalar equations in \cite{GR,KS} to systems with multiplicities. This will require a transformation of the system in \eqref{CauchyP} into block-diagonal form with Sylvester blocks which increases the system size from $m\times m$ to $m^2\times m^2$ but does not change the eigenvalues, in the sense that every block will have the same eigenvalues as $A(t,\xi)$. Such a transformation, introduced by D'Ancona and Spagnolo in \cite{DS}, has the side effect to generate a matrix of lower order terms even when the original system is homogeneous, i.e., \eqref{CauchyP} will be transformed into a Cauchy problem of the type
\[
\left\{ \begin{array}{l}
	D_t U = \pazocal A(t,D_x) U + \pazocal B(t,D_x)U, \\
	\left. U \right|_{t=0} = U_0.
	\end{array} \right.
\]
It becomes therefore crucial to understand how the lower order terms in $\pazocal{B}(t,\xi)$ are related to the matrix $\pazocal{A}(t,\xi)$, which is in turn related to $A(t,\xi)$, and which Levi-type conditions have to be formulated on them to get the desired well-posedness. These Levi-type conditions will then be expressed in terms of the matrix $A(t,\xi)$. In the next subsection we collect our main results and we give a scheme of the proof.

\subsection{Results and scheme of the proof} \label{sec:ResultScheme}

In the sequel, we denote the elementary symmetric polynomials $\sigma_h^{(m)}(\lambda)$ by
\[
\sigma_h^{(m)}(\lambda)=(-1)^h\sum_{1\le i_1<...<i_h\le m}\lambda_{i_1}...\lambda_{i_h},
\]
for $1 \leq h \leq m$ and $\sigma_0^{(m)}(\lambda) = 1$, where $\lambda=(\lambda_1,\dots,\lambda_m)$ is given by the rescaled eigenvalues $\lambda_i = \lambda_i(t,\xi)$ of $A(t,\xi)$ and $\pi_i\lambda=(\lambda_1,...,\lambda_{i-1},\lambda_{i+1},...,\lambda_m)$. Moreover, given $f=f(t,\xi)$ and $g(t,\xi)$ we use the notation $f\prec g$, when it exists a constant $C>0$ such that $f(t,\xi)\le C g(t,\xi)$ for all $t\in[0,T]$ and $\xi\in\R^n$. We will also use $(\cdot)$ in the upper left corner of a symbol as in $b_{ij}^{(l)}$. By that we will not denote derivatives but use this as an index.

\begin{theorem} \label{theo_GJ_intro}
Let $A(t,D_x)$, $t\in[0,T]$, $x\in\R^n$, be an $m\times m$ matrix of first order differential operators with $C^\infty$-coefficients. Let $A(t,\xi)$ have real eigenvalues satisfying condition \eqref{eq:CondKinSp}. Assume that the Cauchy problem 
\[
\left\{ \begin{array}{l}
	D_t U = \pazocal A(t,D_x) U + \pazocal B(t,D_x)U, \\
	\left. U \right|_{t=0} = U_0,
	\end{array} \right.
\]
obtained from \eqref{CauchyP} by block Sylvester transformation has the lower order terms matrix $ \pazocal B(t,\xi)$ with entries $b_{kj}^{(l)}(t,\xi)$ fulfilling the Levi-type conditions \begin{equation} \label{cond:LeviPrec}
\sum_{k=1}^m|b_{kj}^{(l)}(t,\xi)|^2\prec \sum_{i=1}^m |\sigma_{m-l}^{(m-1)}(\pi_i \lambda)|^2,
\end{equation} for $l=1,\dots,m-1$ and $j=1,\dots,m$. Hence, for all $s\ge 1$ and for all $u_0\in \gamma^s(\R^n)^m$ there exists a unique solution $u\in C^1([0,T], \gamma^s(\R^n))^m$ of the Cauchy problem \eqref{CauchyP}.
\end{theorem}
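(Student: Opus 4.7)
The plan is to work directly with the block-Sylvester reduced system $D_t U = \pazocal{A}(t,D_x) U + \pazocal{B}(t,D_x) U$ provided by the D'Ancona--Spagnolo transformation from \cite{DS}. Here $\pazocal{A}(t,\xi)$ is block diagonal with $m$ identical $m\times m$ Sylvester blocks $S(t,\xi)$, each with characteristic polynomial $\prod_{i=1}^m(\tau-\lambda_i(t,\xi)\lara{\xi})$. Because every block shares the same eigenvalues, it suffices to run an energy argument on a single block and assemble the result blockwise, and well-posedness of the original Cauchy problem \eqref{CauchyP} follows from well-posedness of the transformed one via the explicit invertibility of the Sylvester reduction on the Fourier side.

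First I would pass to the partial Fourier transform $\hat U(t,\xi)$, reducing matters to an ODE system with parameter $\xi\in\R^n$. Second, following Kinoshita--Spagnolo \cite{KS}, I would attach to each Sylvester block $S(t,\xi)$ a Hermitian quasi-symmetrizer $Q_\eps(t,\xi)$ depending on a small parameter $\eps>0$ that satisfies $c_0\eps^{2(m-1)}I\le Q_\eps(t,\xi)\le c_1 I$, with $Q_\eps S - S^\ast Q_\eps$ and $\partial_t Q_\eps$ both controlled by the hyperbolicity condition \eqref{eq:CondKinSp}. I would then define the total energy
\[
E_\eps(t,\xi)=\sum_{k=1}^{m}\bigl(Q_\eps(t,\xi)\,\hat U^{(k)}(t,\xi),\hat U^{(k)}(t,\xi)\bigr)_{\C^m},
\]
where $\hat U^{(k)}$ is the $k$-th Sylvester block of $\hat U$.

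Third, differentiating $E_\eps$ in $t$ produces three contributions: (i) a $\partial_t Q_\eps$ term, (ii) a commutator term from $\pazocal{A}$ whose imaginary part is absorbed using the symmetrizer property of $Q_\eps$, and (iii) a cross term coming from $\pazocal{B}$. The first two are estimated by Kinoshita--Spagnolo by $C\lara{\xi}\,E_\eps$. For (iii), the Sylvester structure forces the only possibly nonzero entries of $\pazocal{B}(t,\xi)$ to be the $b_{kj}^{(l)}$; an application of Cauchy--Schwarz, the Levi-type bound \eqref{cond:LeviPrec}, and the lower bound $Q_\eps\ge c_0\eps^{2(m-1)}I$ shows that the sum $\sum_{k,j,l}|b_{kj}^{(l)}|^2$ is controlled precisely by the quantities arising in the quasi-symmetrizer construction, yielding
\[
\partial_t E_\eps(t,\xi)\le C\bigl(\lara{\xi}+\lara{\xi}\,\eps^{-(m-1)}\bigr)E_\eps(t,\xi).
\]
Matching the Levi conditions exactly to the $\sigma_{m-l}^{(m-1)}(\pi_i\lambda)$ entries that naturally appear in the $Q_\eps S - S^\ast Q_\eps$ estimate is the main algebraic obstacle: this is what guarantees no extra loss in $\eps$.

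Finally, I would freeze $\eps=\eps(\xi)=\lara{\xi}^{-1/s}$ for any prescribed $s\ge 1$ and apply Gronwall, obtaining
\[
|\hat U(t,\xi)|^2\lesssim \eps(\xi)^{-2(m-1)}\,\exp\bigl(C\,t\,\lara{\xi}^{1/s}\bigr)\,|\hat U_0(\xi)|^2,
\]
which after absorbing the polynomial factor gives the Fourier-side bound $|\hat U(t,\xi)|\lesssim e^{C'\lara{\xi}^{1/s}}|\hat U_0(\xi)|$ characterising Gevrey-$s$ regularity. Inverting the block Sylvester transformation then yields $u\in C^1([0,T],\gamma^s(\R^n))^m$ for every $s\ge 1$, as claimed. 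The decisive point throughout is that the Levi-type conditions \eqref{cond:LeviPrec} are tailored so that $\pazocal{B}$ contributes exactly the power $\eps^{-(m-1)}\lara{\xi}$, no worse, which is what allows the free choice of any $s\ge 1$.
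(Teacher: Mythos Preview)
Your overall architecture (Fourier transform, block quasi-symmetriser, three-term energy inequality, Gronwall, then invert the reduction) matches the paper, but the quantitative content of each of the three terms is wrong, and this breaks the argument.

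First, the commutator term $(\pazocal Q_\eps^{(m)}\pazocal A_0-\pazocal A_0^\ast \pazocal Q_\eps^{(m)})$ is not bounded by $C\lara{\xi}$ but by $C\eps\lara{\xi}$ (Proposition~\ref{prop_qs}(iii)); the factor $\eps$ is essential. Second, the $\partial_t\pazocal Q_\eps^{(m)}$ term is \emph{not} bounded pointwise by $C\lara{\xi}$: it is only controlled in $L^1_t$ by $C\eps^{-2(m-1)/k}$ via Lemma~\ref{lem_new}, and this is precisely where the $C^\infty$ hypothesis on the coefficients enters, since $k$ may be taken arbitrarily large. Third, and most importantly, you misread what the Levi conditions \eqref{cond:LeviPrec} buy. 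They do not produce a term of size $\lara{\xi}\eps^{-(m-1)}$; the whole point of Theorem~\ref{thm:LOT} is that they yield $|\pazocal W^{(m)}\pazocal B V|\prec |\pazocal W^{(m)} V|$, hence
\[
|((\pazocal Q_0^{(m)}\pazocal B-\pazocal B^\ast\pazocal Q_0^{(m)})V|V)|\le C_3\,(\pazocal Q_0^{(m)}V|V)\le C_3\,E_\eps,
\]
a bound by a \emph{constant} independent of $\eps$ and $\xi$. The correct energy inequality is therefore
\[
\partial_t E_\eps\le \bigl(K_\eps(t,\xi)+C_2\,\eps\lara{\xi}+C_3\bigr)E_\eps,\qquad \int_0^T K_\eps\,dt\le C_1\,\eps^{-2(m-1)/k}.
\]
Balancing $\eps^{-2(m-1)/k}$ against $\eps\lara{\xi}$ and letting $k\to\infty$ is what gives every $s\ge 1$; your choice $\eps=\lara{\xi}^{-1/s}$ applied to your inequality would put $\lara{\xi}^{1+(m-1)/s}$ in the exponent, not $\lara{\xi}^{1/s}$, so the conclusion would fail even on its own terms.
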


The formulation of the Levi-type conditions given above requires a precise knowledge of the matrix $\pazocal{B}(t,\xi)$. For that see the section \ref{sec:StructBtxi}. It is possible to state the previous well-posedness result completely in terms of the matrix $A(t,\xi)$ and the Cauchy problem \eqref{CauchyP}. This means to introduce an additional hypothesis on the coefficients of $A(t,\xi)$ which implies the Levi-type conditions on $\pazocal{B}(t,\xi)$. In the final section of the paper we will prove that in some cases, for instance when $m=2$, this second formulation is equivalent to the one given in Theorem \ref{theo_GJ_intro}.

\begin{theorem} \label{theo_GJ_2_intro}
Let $A(t,D_x)$, $t\in[0,T]$, $x\in\R^n$, be an $m\times m$ matrix of first order differential operators with $C^\infty$-coefficients. Let $A(t,\xi)$ have real eigenvalues satisfying condition \eqref{eq:CondKinSp} and let $Q=(q_{ij})$ be the symmetriser of $A_0=\lara{\xi}^{-1}A$. Assume that \begin{equation} \label{eq:LeviIntro}
\max_{k=1,\dots,m-1}\Vert D_t^k A_0(t,\xi)\Vert^2 \prec q_{j,j}(t,\xi)
\end{equation} for all $j=1,\dots,m-1$ and all $(t,\xi) \in [0,T] \times \R^n$. Hence, for all $s\ge 1$ and for all $u_0\in \gamma^s(\R^n)^m$ there exists a unique solution $u\in C^1([0,T], \gamma^s(\R^n))^m$ of the Cauchy problem \eqref{CauchyP}. Here, $\| \cdot\|$ denotes the standard matrix norm. \end{theorem}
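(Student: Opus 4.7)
The plan is to derive Theorem \ref{theo_GJ_2_intro} from Theorem \ref{theo_GJ_intro}: I would show that the hypothesis \eqref{eq:LeviIntro} on $A_0$ is strong enough to imply the Levi-type condition \eqref{cond:LeviPrec} on the remainder $\pazocal{B}(t,\xi)$ produced by the block Sylvester reduction of \eqref{CauchyP}. Once this implication is in place, the Gevrey well-posedness in every class $\gamma^s$, $s\ge 1$, is immediate from Theorem \ref{theo_GJ_intro}.

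The implementation proceeds in three steps. First, using the explicit description of the block Sylvester transformation recalled from Section \ref{sec:StructBtxi} and \cite{DS}, I would express each entry $b_{kj}^{(l)}(t,\xi)$ of $\pazocal B$ as a universal polynomial combination of entries of $A_0$ and of its $t$-derivatives $D_t^r A_0$ with $1\le r\le m-1$. A direct inspection then yields the uniform pointwise estimate
\[
|b_{kj}^{(l)}(t,\xi)|^2 \prec \max_{r=1,\dots,m-1}\|D_t^r A_0(t,\xi)\|^2.
\]
Second, I would read off from the Kinoshita-Spagnolo construction that the diagonal entries of the symmetriser $Q$ of $A_0$ satisfy
\[
q_{j,j}(t,\xi) \asymp \sum_{i=1}^m \bigl|\sigma_{m-j}^{(m-1)}(\pi_i\lambda)(t,\xi)\bigr|^2, \qquad j=1,\dots,m-1,
\]
so that the weight appearing on the right-hand side of \eqref{cond:LeviPrec} coincides, up to equivalence, with $q_{l,l}$.

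Third, combining these two facts with the hypothesis \eqref{eq:LeviIntro} applied at the index $l\in\{1,\dots,m-1\}$ gives, for every $j\in\{1,\dots,m\}$,
\[
\sum_{k=1}^m|b_{kj}^{(l)}|^2 \le m\max_k|b_{kj}^{(l)}|^2 \prec \max_{r=1,\dots,m-1}\|D_t^r A_0\|^2 \prec q_{l,l} \asymp \sum_{i=1}^m\bigl|\sigma_{m-l}^{(m-1)}(\pi_i\lambda)\bigr|^2,
\]
which is exactly \eqref{cond:LeviPrec}. Theorem \ref{theo_GJ_intro} then produces the desired unique solution $u\in C^1([0,T],\gamma^s(\R^n))^m$.

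I expect the bottleneck to be the first step. One must follow the block Sylvester change of variables carefully enough to verify that the entries of $\pazocal B$ are controlled by $\|D_t^r A_0\|$ with $r\le m-1$ alone and, crucially, that no positive powers of $\lara{\xi}$ or inverse factors survive once one rescales by $\lara{\xi}^{-1}$. This is essentially a bookkeeping exercise intrinsic to the construction of the transformation matrix $T(t,\xi)$ and to its commutators with $D_t$, but needs to be executed cleanly before the three estimates can be chained. The identification of $q_{j,j}$ with $\sum_i\bigl|\sigma_{m-j}^{(m-1)}(\pi_i\lambda)\bigr|^2$ in the second step is, by contrast, a direct consequence of the explicit Kinoshita-Spagnolo formula for the symmetriser.
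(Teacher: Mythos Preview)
Your proposal is correct and follows essentially the same route as the paper. The paper's proof (given for the restated Theorem \ref{theo_GJ_2}) derives the bound $\sum_{k=1}^m|b_{kj}^{(l)}|^2\prec\max_{r}\|D_t^rA_0\|^2$ by invoking the explicit formulas for $\tbf{B}_{h+1}$ in Proposition \ref{prop_B_lot} and Corollary \ref{cor:LOT}, then combines this with \eqref{eq:LeviIntro} to obtain \eqref{cond:LeviPrec} and appeals to Theorem \ref{theo_GJ_intro}; your identification $q_{j,j}\asymp\sum_i|\sigma_{m-j}^{(m-1)}(\pi_i\lambda)|^2$ is exactly Proposition \ref{prop_qs}(v), which the paper uses implicitly.
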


\begin{remark} For some more concrete examples in the cases $m = 2$ and $3$, see the remarks in Section \ref{sec_final}. \end{remark}

Since the entries of the symmetriser are polynomials depending on the eigenvalues of $A(t,\xi)$, we require in Theorem \ref{theo_GJ_2_intro} that the $t$-derivatives of $A(t,\xi)$ up to order $m-1$ are bounded by suitable polynomials of the eigenvalues $\lambda_1(t,\xi),\dots,\lambda_m(t,\xi)$. Note that, as observed already in the appendix of \cite{GR}, these polynomials can be expressed in terms of the entries of $A(t,\xi)$.

When the entries of $A(t,\xi)$ are analytic, then we prove that the Cauchy problem \eqref{CauchyP} is $C^\infty$ well-posed. The precise statements can be obtained by replacing $\gamma^s$ with $C^\infty$ in Theorem \ref{theo_GJ_intro} and Theorem \ref{theo_GJ_2_intro}.

We conclude this subsection by presenting the scheme of the proof of Theorem \ref{theo_GJ_intro} which combines ideas from \cite{DS} and \cite{GR} .

 \begin{enumerate}[{\quad Step~}1{~}]
	\item Compute the adjunct matrix $\adj(I_m\tau - A(t,\xi)) = \cof(I_m\tau - A^T(t,\xi))$, where $I_m$ is the identity matrix of size $m \times m$. We thus have the relation \begin{equation*}
	\adj(I_m \tau - A(t,\xi))(I_m \tau - A(t,\xi)) = \sum\limits_{h=0}^{m} c_{h}(t,\xi)I_m \tau^{m-h},
	\end{equation*} where the $c_{h}(t,\xi)$ are homogeneous polynomials of order $h$ in $\xi$ and are given by the coefficients of the characteristic polynomial of $A(t,\xi)$. See Appendix \ref{sec:Appendix}.
	\item Apply the operator $\adj(I_m D_t-A(t,D_x))$, associated to the symbol $\adj(I_m \tau - A(t,\xi))$, to the system \eqref{CauchyP} and obtain a set of scalar equations for $u_1$ to $u_m$, where the operator acting on these is associated to $\det(I_m \tau - A(t,\xi))$. Additionally, one gets some lower order terms which can be computed explicitly.
	\item Convert the resulting set of equations \begin{equation*}
	\det(I_m D_t - A(t,D_x))u + l.o.t. = 0
	\end{equation*} to Sylvester block diagonal form following the method of Taylor in \cite{Taylor:81}, i.e by setting \begin{equation} \label{eq:DefTrafo} \begin{aligned}
			U &= \left( U_1 , U_2, \dots, U_m \right)^T,~\text{where} \\
			U_{k} &= \left( \Dx^{m-1} u_k, D_t\Dx^{m-2} u_k, \dots, D_t^{m-1} u_k \right)
	\end{aligned} \end{equation} for $k=1,\dots,m$. This transformation maps each equation to a system in Sylvester form and glues those systems in block diagonal form together. Hence, we achieve a block diagonal form with Sylvester blocks associated to the characteristic polynomial of \eqref{CauchyP}. This means that each block will have the same eigenvalues as $A(t,\xi)$. The initial data will be transformed in the same way to obtain a new set of initial data $U_0$ for the new system.
	\item Consider the resulting system \begin{equation} \label{eq:TrafoIntro}
		\left\{ \begin{array}{ll}
		&D_t U = \pac A(t,D_x)U + \pac B(t,D_x)U \\
		&\left. U \right|_{t=0} = U_0
		\end{array} \right.,
	\end{equation} where $\pazocal A(t,D_x)$ and $\pazocal B(t,D_x)$ are matrices of size $m^2 \times m^2$ with a special structure. As explained above, $\pazocal A(t,D_x)$ is a block diagonal matrix with $m$ identical blocks of Silvester matrices having the same eigenvalues as $A(t,\xi)$ and $\pazocal B(t,D_x)$ is composed of $m \times m^2$ blocks with only the last row not identically zero. Since the original homogeneous system has been transformed into a system with lower order terms, to get well-posedness of the corresponding Cauchy problem \eqref{eq:TrafoIntro}, we need to find some Levi-type conditions. These are obtained by following the ideas for scalar equations in \cite{GR}.
	\item We apply the partial Fourier transform with respect to $x$ to \eqref{eq:TrafoIntro} and we prove an energy estimate from which the assertions of the well-posedness theorems follow in a standard way. A key point is the construction of the quasi-symmetriser of the matrix $\pac{A}(t,\xi)$.
\end{enumerate}

The remainder of the paper is organised as follows. In Section \ref{SEC:qs}, we present a short survey on the quasi-symmetriser which will be employed to formulate and prove the energy estimate. The core of Section \ref{sec:BlockDiagonal} is the transformation of $A(t,\xi)$ from \eqref{CauchyP} to block Sylvester form. An explicit description of $\adj(I_m D_t - A(t,D_x))$ and the resulting lower order terms is also given in Section \ref{sec:BlockDiagonal}, together with a detailed scheme of the proof in the cases $m=2$ and $m=3$. Section \ref{sec_energy} is devoted to the energy estimate and Section \ref{sec_lot} to the estimates for the lower order terms. The paper ends with the well-posedness results in Section \ref{sec_final} and the Appendix L, where we collect some algebraic results concerning $\adj(I_m \tau - A(t,\xi))$.

\section{The quasi-symmetriser} \label{SEC:qs}

Here we recall some facts about the quasi-symmetriser that we will need throughout the paper. For more details see \cite{DS,KS}. Note that for $m\times m$ matrices $A_1$ and $A_2$ the notation $A_1\le A_2$ means $(A_1v,v)\le (A_2v,v)$ for all $v\in\C^m$ with $(\cdot,\cdot)$ the scalar product in $\C^m$. Let $M(\lambda)$ be a $m\times m$ Sylvester matrix with real eigenvalues $\lambda_l$, i.e.,
\[
M(\lambda)=\left(
\begin{array}{ccccc}
0 & 1 & 0 & \dots & 0\\
0 & 0 & 1 & \dots & 0 \\
\dots & \dots & \dots & \dots & 1 \\
-\sigma_m^{(m)}(\lambda) & -\sigma_{m-1}^{(m)}(\lambda) & \dots & \dots & -\sigma_1^{(m)}(\lambda) \\
\end{array}
\right),
\]
where the $\sigma_h^{(m)}(\lambda)$ are defined as \begin{equation} \label{eq:DefSymPol}
	\sigma_h^{(m)}(\lambda)=(-1)^h\sum_{1\le i_1<...<i_h\le m}\lambda_{i_1}...\lambda_{i_h}
\end{equation} for all $1\le h\le m$. We further set $\sigma_0^{(m)}(\lambda) = 1$. In the sequel we make use of the following notations: $\pazocal{P}_m$ for the class of permutations of $\{1,...,m\}$, $\lambda_\rho=(\lambda_{\rho_1},...,\lambda_{\rho_m})$ with $\lambda\in\R^m$ and $\rho\in\pazocal{P}_m$, $\pi_i\lambda=(\lambda_1,...,\lambda_{i-1},\lambda_{i+1},...,\lambda_m)$ and $\lambda'=\pi_m\lambda=(\lambda_1,...,\lambda_{m-1})$.

To construct the quasi-symmetriser, we follow \cite{KS} and define $P^{(m)}(\lambda)$ inductively by $P^{(1)}(\lambda)=1$ and \begin{equation*}
P^{(m)}(\lambda)=\left(
\begin{array}{ccccc}
\, & \, & \, & \, & 0\\
\, & \, & P^{(m-1)}(\lambda') & \, & \vdots \\
\, & \, & \, & \, & 0 \\
\sigma_{m-1}^{(m-1)}(\lambda') & \dots & \dots & \sigma_1^{(m-1)}(\lambda') & 1 \\
\end{array}
\right).
\end{equation*} Further, we set, for $\eps\in(0,1]$, \begin{equation*}
	P_\eps^{(m)}(\lambda)=H^{(m)}_\eps P^{(m)}(\lambda),
\end{equation*} where $H_\eps^{(m)}={\rm diag}\{\eps^{m-1},...,\eps,1\}$. We remark that $P^{(m)}(\lambda)$ depends only on $\lambda'$. Finally, the quasi-symmetriser is the Hermitian matrix \begin{equation*}
	Q^{(m)}_\eps(\lambda)=\sum_{\rho\in\pazocal{P}_m} P_\eps^{(m)}(\lambda_\rho)^\ast P_\eps^{(m)}(\lambda_\rho).
\end{equation*} To describe the properties of $Q^{(m)}_\eps(\lambda)$ in more detail in the next proposition, we denote by $W^{(m)}_i(\lambda)$ the row vector \begin{equation*}
\big(\sigma_{m-1}^{(m-1)}(\pi_i\lambda),...,\sigma_1^{(m-1)}(\pi_i\lambda),1\big),\quad 1\le i\le m,
\end{equation*} and let $W^{(m)}(\lambda)$ be the matrix with row vectors $W^{(m)}_i$.

The following proposition collects the main properties of the quasi-symmetriser $Q^{(m)}_\eps(\lambda)$. For a detailed proof we refer the reader to Propositions 1 and 2 in \cite{KS} and to Proposition 1 in \cite{DS}. \pagebreak

\begin{proposition}
\label{prop_qs}
\leavevmode
\begin{enumerate}[{\qquad}(i){}]
\item The quasi-symmetriser $Q_\eps^{(m)}(\lambda)$ can be written as
\[
Q_0^{(m)}(\lambda)+\eps^2 Q_1^{(m)}(\lambda)+...+\eps^{2(m-1)}Q_{m-1}^{(m)}(\lambda),
\]
where the matrices $Q^{(m)}_i(\lambda)$, $i=1,...,m-1,$ are non-negative and Hermitian with
entries being symmetric polynomials in $\lambda_1,...,\lambda_m$.
\item There exists a function $C_m(\lambda)$ bounded for
bounded $|\lambda|$ such that
		\[
		C_m(\lambda)^{-1}\eps^{2(m-1)}I\le Q^{(m)}_\eps(\lambda)\le C_m(\lambda)I.
		\]
		\item We have
		\[
		-C_m(\lambda)\eps Q_\eps^{(m)}(\lambda)\le Q_\eps^{(m)}(\lambda) M(\lambda)- M(\lambda)^\ast Q_\eps^{(m)}(\lambda)\le C_m(\lambda)\eps Q_\eps^{(m)}(\lambda).
		\]
		\item For any $(m-1)\times(m-1)$ matrix $T$ let $T^\sharp$ denote the $m\times m$ matrix
		\[
		\left(
		\begin{array}{cc}
		T & 0\\
		0 & 0 \\
		\end{array}
		\right).
		\]
		Then, $Q_\eps^{(m)}(\lambda)=Q_0^{(m)}(\lambda)+\eps^2\sum_{i=1}^m Q_\eps^{(m-1)}(\pi_i\lambda)^\sharp$.
		\item We have
		\[
		Q_0^{(m)}(\lambda)=(m-1)! W^{(m)}(\lambda)^\ast W^{(m)}(\lambda).
		\]
		\item We have
		\[
		\det Q_0^{(m)}(\lambda)=(m-1)!\prod_{1\le i<j\le m}(\lambda_i-\lambda_j)^2.
		\]
		\item There exists a constant $C_m$ such that
		\[
		q_{0,11}^{(m)}(\lambda)\cdots q_{0,mm}^{(m)}(\lambda)\le C_m\prod_{1\le i<j\le m}(\lambda^2_i+\lambda^2_j).
		\]
		\end{enumerate}
		\end{proposition}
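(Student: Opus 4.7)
The plan is to establish the seven assertions by exploiting the defining formula
\[
Q_\eps^{(m)}(\lambda) = \sum_{\rho\in\pazocal{P}_m} P^{(m)}(\lambda_\rho)^\ast (H_\eps^{(m)})^2 P^{(m)}(\lambda_\rho)
\]
together with the recursive block structure of $P^{(m)}(\lambda)$. For (i), since $(H_\eps^{(m)})^2 = \diag(\eps^{2(m-1)},\dots,\eps^2,1)$, expanding produces a polynomial in $\eps^2$ of degree $m-1$ whose coefficient of $\eps^{2i}$, namely $Q_i^{(m)}(\lambda)$, is a sum over $\pazocal{P}_m$ of positive semidefinite rank-one outer products; hence $Q_i^{(m)}$ is Hermitian and non-negative, and the averaging over all permutations forces its entries to be symmetric polynomials in $\lambda$. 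For (iv), I would regroup the sum according to the value of $\rho_m=i$ and use the block form of $P^{(m)}$, whose top-left $(m-1)\times(m-1)$ block equals $P^{(m-1)}(\pi_i\lambda_\rho)$ and whose bottom row is the ``symmetric polynomial'' row; the contribution of the bottom-right entry of $(H_\eps^{(m)})^2$ (which equals $1$) yields exactly $Q_0^{(m)}$, while the remaining diagonal entries contribute $\eps^2 \sum_i Q_\eps^{(m-1)}(\pi_i\lambda)^\sharp$.

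For (v), note that the last row of $P^{(m)}(\lambda_\rho)$ is $W^{(m)}_{\rho_m}(\lambda)$, since it depends only on $\pi_{\rho_m}\lambda$; summing over $\rho$ each index $1\le i\le m$ is hit $(m-1)!$ times, giving $Q_0^{(m)}(\lambda) = (m-1)!\, W^{(m)}(\lambda)^\ast W^{(m)}(\lambda)$. Formula (vi) then reduces to computing $\det W^{(m)}(\lambda)$: since $\sigma_h^{(m-1)}(\pi_i\lambda)$ are, up to sign, the coefficients of $\prod_{j\ne i}(t-\lambda_j)$, the matrix $W^{(m)}(\lambda)$ is Vandermonde-equivalent and its determinant equals $\pm\prod_{i<j}(\lambda_i-\lambda_j)$. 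For (vii), from (v) one has $q_{0,jj}^{(m)}(\lambda) = (m-1)!\sum_i|\sigma_{m-j}^{(m-1)}(\pi_i\lambda)|^2$; expanding the product $\prod_j q_{0,jj}^{(m)}$ and bounding each factor via Cauchy--Schwarz in terms of monomials in $\lambda_k^2$ yields the claimed estimate by $\prod_{i<j}(\lambda_i^2+\lambda_j^2)$. The upper bound in (ii) is immediate from boundedness of the entries, while the lower bound $C_m(\lambda)^{-1}\eps^{2(m-1)} I \le Q_\eps^{(m)}$ follows by induction on $m$ via (iv), the top-left block being controlled by the inductive hypothesis (losing one factor $\eps^2$ per step) and the last direction being controlled by the $\eps$-independent $Q_0^{(m)}$ piece, since the final component of each $W_i^{(m)}$ equals $1$.

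The main obstacle is part (iii). The strategy is to first establish the algebraic identity $P^{(m)}(\lambda) M(\lambda) = L^{(m)}(\lambda) P^{(m)}(\lambda)$, where $L^{(m)}(\lambda)$ has $M(\pi_m\lambda)$ in its top-left $(m-1)\times(m-1)$ block, $\lambda_m$ in its bottom-right corner, and off-diagonal coupling concentrated in the last column; this partial diagonalisation is verified row-by-row using the Sylvester structure of $M$ and the inductive definition of $P^{(m)}$. Conjugating by $H_\eps^{(m)}$ gives $P_\eps^{(m)}(\lambda) M(\lambda) = \bigl(H_\eps L^{(m)} H_\eps^{-1}\bigr) P_\eps^{(m)}(\lambda)$, and the off-diagonal entries of $H_\eps L H_\eps^{-1}$ carry a positive power of $\eps$. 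Forming $Q_\eps M - M^\ast Q_\eps$ by summation over $\rho$, the ``symmetric part'' cancels and one is left with a remainder of order $\eps$; the delicate point is to bound this remainder by $C_m(\lambda)\eps Q_\eps^{(m)}$ rather than the weaker $C_m(\lambda)\eps I$, which requires applying Cauchy--Schwarz to each off-diagonal contribution against the diagonal blocks of $Q_\eps^{(m)}$ with the correct $\eps$-weights, and is where the inductive recursion from (iv) and the two-sided bounds from (ii) enter crucially.
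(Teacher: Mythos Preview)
The paper does not give its own proof of this proposition: it simply states ``For a detailed proof we refer the reader to Propositions 1 and 2 in \cite{KS} and to Proposition 1 in \cite{DS}.'' So there is no in-paper argument to compare against; your proposal is an actual reconstruction of the proofs that the paper outsources.

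That said, your outline is faithful to the arguments in the cited sources. The expansion in (i), the regrouping by $\rho_m$ in (iv), the last-row identification in (v), and the Vandermonde computation in (vi) are exactly how Kinoshita--Spagnolo proceed. For (iii) your key algebraic step, the intertwining relation $P^{(m)}(\lambda)M(\lambda)=L^{(m)}(\lambda)P^{(m)}(\lambda)$ with $L^{(m)}$ block upper-triangular (top-left block $M(\pi_m\lambda)$, bottom-right entry $\lambda_m$, coupling only in the last column), is precisely Lemma~1 of \cite{DS}; after conjugation by $H_\eps^{(m)}$ the off-diagonal coupling picks up positive powers of $\eps$, and the bound by $\eps\,Q_\eps^{(m)}$ rather than $\eps I$ is obtained in \cite{KS} via the recursion (iv), as you indicate. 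Your sketches for (ii) and (vii) are correct in spirit but thinner than the cited proofs: for (ii) the induction on $m$ through (iv) needs the observation that $q^{(m)}_{0,mm}=m!$ to control the last diagonal direction uniformly in $\eps$, and for (vii) the product inequality in \cite{KS} is proved by a counting argument on the multi-indices appearing in $\prod_j\sum_i|\sigma^{(m-1)}_{m-j}(\pi_i\lambda)|^2$ rather than a bare Cauchy--Schwarz, so that step would benefit from a more explicit combinatorial bookkeeping.
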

		
We finally recall that a family $\{Q_\alpha\}$ of non-negative Hermitian matrices is called \emph{nearly diagonal} if there exists a positive constant $c_0$ such that \begin{equation*}
		Q_\alpha\ge c_0\,{\rm diag}\,Q_\alpha
\end{equation*} for all $\alpha$, with ${\rm diag}\,Q_\alpha ={\rm diag}\{q_{\alpha,11},...,q_{\alpha, mm}\}$. The following linear algebra result is proven in \cite[Lemma 1]{KS}.
		
\begin{lemma} \label{lem_old}
Let $\{Q_\alpha\}$ be a family of non-negative Hermitian $m\times m$ matrices such that $\det Q_\alpha>0$ and	\[
	\det Q_\alpha \ge c\, q_{\alpha,11}q_{\alpha,22}\cdots q_{\alpha,mm}
	\] for a certain constant $c>0$ independent of $\alpha$. Then, \[
	Q_\alpha\ge c\, m^{1-m}\,{\rm diag}\,Q_\alpha
	\] for all $\alpha$, i.e., the family $\{Q_\alpha\}$ is nearly diagonal.
\end{lemma}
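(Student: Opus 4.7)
The plan is to reduce to a normalised setting and then run a short eigenvalue argument. First, since $\det Q_\alpha>0$ and $Q_\alpha$ is non-negative Hermitian, it is actually positive definite, so every diagonal entry $q_{\alpha,jj}=(Q_\alpha e_j,e_j)$ is strictly positive. This allows me to introduce the positive diagonal matrix $D_\alpha=\diag\{q_{\alpha,11},\dots,q_{\alpha,mm}\}$, its invertible square root $D_\alpha^{1/2}$, and the normalised matrix $\widetilde Q_\alpha=D_\alpha^{-1/2}Q_\alpha D_\alpha^{-1/2}$, which is positive definite Hermitian, has every diagonal entry equal to $1$, and satisfies $\det\widetilde Q_\alpha=\det Q_\alpha/(q_{\alpha,11}\cdots q_{\alpha,mm})\ge c$ by hypothesis. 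In particular $\tr\widetilde Q_\alpha=m$.

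The second step is to extract a lower bound on the smallest eigenvalue of $\widetilde Q_\alpha$. Writing its eigenvalues as $0<\mu_1\le\cdots\le\mu_m$, the trace constraint forces $\mu_j\le m$ for each $j$, hence $\mu_2\cdots\mu_m\le m^{m-1}$, and combining with the determinant lower bound gives
\[
\mu_1=\frac{\det\widetilde Q_\alpha}{\mu_2\cdots\mu_m}\ge c\,m^{1-m}.
\]
Therefore $\widetilde Q_\alpha\ge c\,m^{1-m}\,I$ in the sense of Hermitian forms.

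Finally, I undo the normalisation by conjugating back with $D_\alpha^{1/2}$:
\[
Q_\alpha=D_\alpha^{1/2}\widetilde Q_\alpha D_\alpha^{1/2}\ge c\,m^{1-m}\,D_\alpha=c\,m^{1-m}\,\diag Q_\alpha,
\]
which is exactly the nearly diagonal inequality. All constants depend only on $m$ and the given $c$, so the bound is uniform in $\alpha$. There is no real obstacle: the one observation worth isolating is that conjugation by $D_\alpha^{-1/2}$ turns the hypothesis into the purely numerical statement that a positive definite Hermitian matrix with unit diagonal and determinant at least $c$ has smallest eigenvalue at least $c\,m^{1-m}$, which is immediate from the trace bound $\mu_1+\cdots+\mu_m=m$.
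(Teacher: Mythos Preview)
Your argument is correct. The paper does not supply its own proof of this lemma; it merely cites \cite[Lemma~1]{KS}, and the normalisation-by-$D_\alpha^{-1/2}$ followed by the trace/determinant eigenvalue bound that you give is precisely the standard argument used there.
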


Lemma \ref{lem_old} is employed to prove that the family  $Q_\eps^{(m)}(\lambda)$ of quasi-sym\-me\-tri\-sers defined above is nearly diagonal when $\lambda$ belongs to a suitable set. The following statement is proven in \cite[Proposition 3]{KS}.

\begin{proposition}
	\label{prop_SM}
	For any $M>0$ define the set
	\[
	\pazocal{S}_M=\{\lambda\in\R^m:\, \lambda_i^2+\lambda_j^2\le M (\lambda_i-\lambda_j)^2,\quad 1\le i<j\le m\}.
	\]
	Then the family of matrices $\{Q_\eps^{(m)}(\lambda):\, 0<\eps\le 1, \lambda\in\pazocal{S}_M\}$ is nearly diagonal.
\end{proposition}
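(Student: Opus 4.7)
The strategy is to prove by induction on $m$ that the family $\{Q_\eps^{(m)}(\lambda) : \eps \in (0,1],\, \lambda \in \pazocal{S}_M\}$ is nearly diagonal with a constant $c_m > 0$ depending only on $m$ and $M$. The base case $m = 1$ is trivial since $Q_\eps^{(1)} \equiv 1$. For the inductive step, we exploit the recursive identity
\begin{equation*}
Q_\eps^{(m)}(\lambda) = Q_0^{(m)}(\lambda) + \eps^2 \sum_{i=1}^m Q_\eps^{(m-1)}(\pi_i\lambda)^\sharp
\end{equation*}
from item (iv) of Proposition~\ref{prop_qs}, noting that the defining inequalities of $\pazocal{S}_M$ descend under the projection $\lambda \mapsto \pi_i\lambda$, so $\pi_i\lambda \in \pazocal{S}_M \subset \R^{m-1}$ whenever $\lambda \in \pazocal{S}_M \subset \R^m$.

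First we treat the $\eps$-independent part $Q_0^{(m)}$. Items (vi) and (vii) combined with the Kinoshita--Spagnolo inequality defining $\pazocal{S}_M$ yield
\begin{equation*}
\det Q_0^{(m)}(\lambda) = (m-1)!\!\!\prod_{1\le i<j\le m}\!(\lambda_i - \lambda_j)^2 \ge \frac{(m-1)!}{M^{m(m-1)/2}}\prod_{i<j}(\lambda_i^2+\lambda_j^2) \ge \frac{(m-1)!}{C_m M^{m(m-1)/2}}\prod_{k=1}^m q_{0,kk}^{(m)}(\lambda),
\end{equation*}
so Lemma~\ref{lem_old} delivers a uniform constant $c_0 = c_0(m,M) > 0$ with $Q_0^{(m)}(\lambda) \ge c_0\, \diag Q_0^{(m)}(\lambda)$ on $\pazocal{S}_M$. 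The inductive hypothesis provides $Q_\eps^{(m-1)}(\pi_i\lambda) \ge c_{m-1}\, \diag Q_\eps^{(m-1)}(\pi_i\lambda)$ for each $i$; since the ordering $A \ge c \diag A$ is preserved by the $\sharp$ operation and by summation of positive semi-definite matrices, we deduce
\begin{equation*}
\sum_{i=1}^m Q_\eps^{(m-1)}(\pi_i\lambda)^\sharp \ge c_{m-1}\, \diag \sum_{i=1}^m Q_\eps^{(m-1)}(\pi_i\lambda)^\sharp.
\end{equation*}
Setting $c_m = \min(c_0, c_{m-1})$ and adding the two inequalities (the second one multiplied by $\eps^2 \ge 0$) gives, via (iv), the desired bound $Q_\eps^{(m)}(\lambda) \ge c_m\, \diag Q_\eps^{(m)}(\lambda)$ on $\pazocal{S}_M$, closing the induction.

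\textbf{Main obstacle.} The only substantive algebraic step is the base estimate for $Q_0^{(m)}$: the Kinoshita--Spagnolo inequality is exactly what is required to close the chain between (vi), which expresses $\det Q_0^{(m)}$ as a Vandermonde-type product in the factors $(\lambda_i-\lambda_j)^2$, and (vii), which bounds the product of diagonals by factors of the form $(\lambda_i^2+\lambda_j^2)$. The remainder of the inductive argument is formal and depends only on the compatibility of the near-diagonal ordering with $\sharp$ and with PSD sums.
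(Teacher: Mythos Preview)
Your argument is correct and is essentially the inductive proof of \cite[Proposition~3]{KS}, which the paper cites without reproducing: the recursion in Proposition~\ref{prop_qs}(iv) reduces matters to near-diagonality of $Q_0^{(m)}$ on $\pazocal S_M$, and this follows from Lemma~\ref{lem_old} via items (vi) and (vii) exactly as you indicate. One small point worth making explicit: Lemma~\ref{lem_old} requires $\det Q_\alpha>0$, yet $\det Q_0^{(m)}(\lambda)=0$ whenever two $\lambda_i$ coincide (on $\pazocal S_M$ this forces both to vanish); since $\pazocal S_M\subset\pazocal S_{M'}$ for $M\le M'$ you may assume $M$ large enough that the locus of pairwise distinct $\lambda_i$ is dense in $\pazocal S_M$, apply the lemma on that dense subset, and extend the closed inequality $Q_0^{(m)}(\lambda)\ge c_0\,\diag Q_0^{(m)}(\lambda)$ by continuity.
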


We conclude this section with a result on nearly diagonal matrices depending on three parameters, $\eps$, $t$, and $\xi$ which will be crucial in Section \ref{sec_energy}. Note that this is a straightforward extension of Lemma 2 in \cite{KS} valid for matrices depending on two parameters, $\eps$ and $t$.
				
\begin{lemma}
	\label{lem_new}
	Let $\{ Q^{(m)}_\eps(t,\xi): 0<\eps\le 1, 0\le t\le T, \xi\in\R^n\}$ be a nearly diagonal family of coercive Hermitian matrices of class ${C}^k$ in $t$, $k\ge 1$. Then, there exists a constant $C_T>0$ such that for any	continuous function $V:[0,T]\times\R^n\to \C^m$ we have
	\[
		\int_{0}^T \frac{|(\partial_t Q^{(m)}_\eps(t,\xi)
		V(t,\xi),V(t,\xi))|}{(Q^{(m)}_\eps(t,\xi)V(t,\xi),V(t,\xi))^{1-1/k}
		|V(t,\xi)|^{2/k}}\, dt\le C_T
		\Vert Q^{(m)}_\eps(\cdot,\xi)\Vert^{1/k}_{{C}^k([0,T])}
	\]
	for all $\xi\in\Rn.$
\end{lemma}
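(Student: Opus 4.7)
The plan is to reduce the three-parameter statement to the already-known two-parameter version, Lemma 2 of \cite{KS}, by freezing $\xi$ and then checking that the resulting constant is uniform in $\xi$.

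Fix $\xi \in \R^n$ arbitrarily. By the hypothesis of the present lemma, the subfamily $\{Q_\eps^{(m)}(t,\xi) : 0 < \eps \le 1,\ 0 \le t \le T\}$, now viewed as being parametrised by $(\eps,t)$ alone, is a nearly diagonal family of coercive Hermitian matrices of class $C^k$ in $t$, with near-diagonality constant $c_0$ inherited directly from the three-parameter hypothesis and hence independent of $\xi$. The map $t \mapsto V(t,\xi)$ is continuous from $[0,T]$ to $\C^m$. Thus the two-parameter result of \cite[Lemma~2]{KS} applies pointwise in $\xi$ and gives the inequality
\[
\int_0^T \frac{|(\partial_t Q_\eps^{(m)}(t,\xi) V(t,\xi),V(t,\xi))|}{(Q_\eps^{(m)}(t,\xi) V(t,\xi),V(t,\xi))^{1-1/k}|V(t,\xi)|^{2/k}}\,dt \le C_T(\xi)\, \|Q_\eps^{(m)}(\cdot,\xi)\|_{C^k([0,T])}^{1/k}
\]
with some constant $C_T(\xi)$.

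The only non-trivial point is that $C_T(\xi)$ can be taken independent of $\xi$. Inspection of the proof of \cite[Lemma~2]{KS} shows that the constant depends only on $m$, $T$, and the near-diagonality constant $c_0$: the two ingredients are (a) the near-diagonality bound $(Q_\eps V,V) \ge c_0 \sum_j q_{\eps,jj}|V_j|^2$, which isolates the diagonal of $Q_\eps$, and (b) a scalar Glaeser-type inequality of the form $|f'(t)| \le C_k \|f\|_{C^k([0,T])}^{1/k} f(t)^{(k-1)/k}$ applied entrywise to the non-negative $C^k$ functions $q_{\eps,jj}(\cdot,\xi)$. Both ingredients are insensitive to $\xi$ once $c_0$ is uniform in $\xi$, so the same $C_T$ works for every $\xi$, and the stated three-parameter estimate follows.

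The only obstacle, then, is bookkeeping: one has to verify that every constant appearing in the original two-parameter argument of \cite{KS} depends only on $m$, $T$ and the near-diagonality constant $c_0$, and not on any quantity that might secretly depend on $\xi$ (such as pointwise bounds on the entries of $Q_\eps^{(m)}$). This is indeed the case because the estimates in \cite{KS} are scale-invariant in $Q_\eps$, and the $C^k$-norm appears explicitly on the right-hand side. No new analytical idea beyond the original two-parameter proof is required; in particular, no regularity of $Q_\eps^{(m)}$ in $\xi$ is used.
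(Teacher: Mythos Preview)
Your proposal is correct and matches the paper's own treatment: the paper does not give a proof of this lemma but simply states that it is a ``straightforward extension of Lemma 2 in \cite{KS} valid for matrices depending on two parameters, $\eps$ and $t$,'' which is exactly the reduction-by-freezing-$\xi$ argument you outline. Your additional remark that the constant $C_T$ depends only on $m$, $T$ and the near-diagonality constant $c_0$ (and hence is uniform in $\xi$) is the right point to check and makes the extension explicit.
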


\begin{remark} \label{rem:QuasiSym}
	All results of this section hold true in the when $Q_{\eps}^{(m)}(t,\xi)$ is replaced by a block diagonal matrix $\pac{Q}_{\eps}^{(m)}(t,\xi)$ with $m$ identical matrices $Q_{\eps}^{(m)}(t,\xi)$ on its diagonal. The corresponding block diagonal matrix with $W^{m}(\lambda)$ blocks is denoted by $\pac{W}^{(m)}(\lambda)$. Proofs follow from a block-wise treatment and application of the results above.
\end{remark}

\subsection{The quasi-symmetriser in the case $m=2$ and $m=3$}
\label{Ex_qs}

For the advantage of the reader, we conclude this section by computing the quasi-symmetrisers $Q^{(2)}_{\eps}$ and $Q^{(3)}_\eps$. For $m=2$, we obtain
\begin{eqnarray*}
	W^{(2)}(\lambda) &=& \begin{pmatrix}
		-\lambda_2 & 1 \\
		-\lambda_1 & 1
	\end{pmatrix} \\
	Q_\eps^{(2)}(\lambda) &=& \begin{pmatrix}
		\lambda_1^2 + \lambda_2^2 & -(\lambda_1 + \lambda_2) \\
		-(\lambda_1 + \lambda_2) & 2
	\end{pmatrix} + 2\eps^2 \begin{pmatrix}
	1 & 0 \\
	0 & 0
\end{pmatrix}.
\end{eqnarray*} Similarly, for $m=3$, we obtain \begin{eqnarray*}
 W^{(3)}(\lambda) &=& \begin{pmatrix}
 	\lambda_2 \lambda_3 & -(\lambda_2+\lambda_3) & 1 \\
 	\lambda_3 \lambda_1 & -(\lambda_3+\lambda_1) & 1 \\
 	\lambda_1 \lambda_2 & -(\lambda_1+\lambda_2) & 1
 \end{pmatrix} \\
 Q_\eps^{(3)}(\lambda) &=& 2 \sum\limits_{1 \leq i < j \leq 3} \begin{pmatrix}
 	(\lambda_i\lambda_j)^2 & -\lambda_i\lambda_j(\lambda_i+\lambda_j) & \lambda_i\lambda_j \\
 	-\lambda_i\lambda_j(\lambda_i+\lambda_j) & (\lambda_i + \lambda_j)^2 & -(\lambda_i + \lambda_j) \\
 	\lambda_i\lambda_j & -(\lambda_i+\lambda_j) & 1
 \end{pmatrix} \\
 && \quad + 2\eps^2 \sum\limits_{1 \leq i \leq 3} \begin{pmatrix}
 	\lambda_i^2 & -\lambda_i & 0 \\
 	-\lambda_i & 1 & 0 \\
 	0 & 0 & 0
 \end{pmatrix} + 6\eps^4 \begin{pmatrix}
 1 & 0 & 0 \\
 0 & 0 & 0 \\
 0 & 0 & 0
\end{pmatrix}.
\end{eqnarray*}

\section{Sylvester block diagonal reduction} \label{sec:BlockDiagonal}

This section is devoted to the Sylvester block diagonal reduction that will be employed on the system \eqref{CauchyP}. This transformation has been introduced by D'Ancona and Spagnolo in \cite{DS}. Here we give a detailed description of how this reduction works on the system $I_m D_t - A(t,D_x)$ and we present explicit formulas for the matrix of lower order terms generated by the procedure. Note that these results are obtained from general linear algebra statements that are collected in the appendix at the end of the paper. We will refer to Appendix \ref{sec:Appendix} throughout this section. The subsections refer to the steps of the proof outlined in Subsection \ref{sec:ResultScheme}.

\subsection{Step 1: The adjunct $\adj(I_mD_t - A(t,D_x))$}

A straightforward application of Lemma \ref{lem:AdjunctRep} leads us to the following proposition.

 \begin{proposition} \label{prop:FormulaLOT} 
 Let $I_m D_t - A(t,D_x)$ be the operator in \eqref{CauchyP}. Then,  
 \begin{equation*}
	\adj(I_m D_t - A(t,D_x)) = \sum\limits_{h = 0}^{m-1} \tbf{A}_{h}(t,D_x) D_t^{m-1-h}
	\end{equation*} where \begin{equation} \label{eq:AuxEqDef1}
	\tbf{A}_{h}(t,D_x) = \sum\limits_{h'=0}^{h} \sigma_{h'}^{(m)}(\lambda) A^{h-h'}(t,D_x),
	\end{equation} 
$\lambda = (\lambda_1, \dots, \lambda_m)$ and $\sigma_h^{(m)}(\lambda)$ as defined in \eqref{eq:DefSymPol}. The differential operator $\adj(I_m D_t - A(t,D_x))$ is of order $m-1$ with respect to $D_t$ and every differential operator $\tbf{A}_h(t,D_x)$, $1 \leq h \leq m$, is of order $h$ with respect to $D_x$. We set $A^0(t,D_x) = I_m$.
\end{proposition}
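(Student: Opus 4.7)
The plan is to derive the expression for $\adj(I_m\tau - A(t,\xi))$ purely at the symbolic level by combining the defining identity of the adjunct with Lemma \ref{lem:AdjunctRep} from the Appendix, and then to quantize the resulting polynomial identity in $\tau$. The starting point is
\[
(I_m\tau - A)\,\adj(I_m\tau - A) = \det(I_m\tau - A)\,I_m,
\]
together with the fact, coming from Vieta's formulas for the eigenvalues $\lambda=(\lambda_1,\dots,\lambda_m)$ of $A(t,\xi)$, that the right-hand side equals $\sum_{h=0}^{m}\sigma_h^{(m)}(\lambda)\,\tau^{m-h}\,I_m$.

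I would then write $\adj(I_m\tau - A) = \sum_{h=0}^{m-1}\tbf{A}_h\,\tau^{m-1-h}$ with coefficients $\tbf{A}_h$ not depending on $\tau$, which is the content of Lemma \ref{lem:AdjunctRep} in the Appendix. Multiplying out the left-hand side of the identity and matching coefficients of equal powers of $\tau$ yields the recursion $\tbf{A}_0 = I_m$ and $\tbf{A}_{h+1} = A\,\tbf{A}_h + \sigma_{h+1}^{(m)}(\lambda)\,I_m$, which unwinds to the closed form
\[
\tbf{A}_h = \sum_{h'=0}^{h}\sigma_{h'}^{(m)}(\lambda)\,A^{h-h'},
\]
matching \eqref{eq:AuxEqDef1}; the Cayley--Hamilton identity reappears as the consistency relation at $h = m$.

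It then remains to quantize and to verify the orders. Since each $\tbf{A}_h$ is independent of $\tau$, the substitution $\tau \mapsto D_t$ is unambiguous after fixing the convention of placing $D_t^{m-1-h}$ to the right of $\tbf{A}_h(t,D_x)$, which gives the stated representation of $\adj(I_m D_t - A(t,D_x))$ as a polynomial of degree $m-1$ in $D_t$. For the order in $D_x$, the operator $A(t,D_x)$ is of first order and the scalar factors $\sigma_{h'}^{(m)}(\lambda)$ contribute the remaining powers of $\xi$, so that each summand $\sigma_{h'}^{(m)}(\lambda)\,A^{h-h'}(t,D_x)$ has total order $h$ in $D_x$, and hence so does $\tbf{A}_h(t,D_x)$. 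The main point requiring care is the bookkeeping of the ordering convention in passing from the symbolic identity to the operator identity, since the algebraic content is entirely delegated to Lemma \ref{lem:AdjunctRep}.
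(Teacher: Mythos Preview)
Your proposal is correct and follows essentially the same route as the paper: the paper simply invokes Lemma~\ref{lem:AdjunctRep}, whose formula \eqref{eq:AdjSym} already gives $\adj(I_m\tau-A)=\sum_{h=0}^{m-1}\bigl[\sum_{h'=0}^{h}c_{h'}A^{h-h'}\bigr]\tau^{m-1-h}$ after an index shift, and then identifies $c_{h'}=\sigma_{h'}^{(m)}(\lambda)$. Your recursion argument is precisely the content of the proof of that lemma in the Appendix, so you are re-deriving what the paper delegates; you could shorten your write-up by citing \eqref{eq:AdjSym} directly rather than only using the lemma for the polynomial form of the adjunct.
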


Proposition \ref{prop:FormulaLOT} completes Step $1$ of our proof as outlined in the scheme. We can therefore proceed to Step 2.   

\subsection{Step 2: Computation of the lower order terms}
 
 \begin{proposition} 
 \label{prop_B_lot}
 
 The lower order terms that arise after applying the adjunct $\adj(I_mD_t-A(t,D_x))$ to the original operator $I_mD_t -A(t,D_x)$ are given by \begin{equation} \label{eq:LemLOT}
	B(t,D_t,D_x)u = - \sum\limits_{h =0}^{m-2} \tbf{A}_{h}(t,D_x)\tbf{A}'_{h}(t,D_t,D_x) ,
	\end{equation} where $\tbf{A}_{h}(t,D_x)$ is defined in \eqref{eq:AuxEqDef1} and \begin{equation} \label{eq:auxDef2}
	\tbf{A}'_{h}(t,D_t,D_x) = \sum_{h'=h}^{m-2} \binom{m-1-h}{h'+1-h} (D_t^{h'+1-h}A)(t,D_x)D_t^{m-2-h'}u.
	\end{equation}
\end{proposition}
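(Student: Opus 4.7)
The plan is a direct computation that tracks precisely how the non-commutativity of $D_t$ with the $t$-dependent coefficients of $A(t,D_x)$ produces lower order terms. Starting from the representation
\[
\adj(I_m D_t - A(t,D_x)) = \sum_{h=0}^{m-1} \tbf{A}_h(t,D_x) D_t^{m-1-h}
\]
of Proposition \ref{prop:FormulaLOT}, I would multiply on the right by $(I_m D_t - A(t,D_x))$ and write the result as
\[
\sum_{h=0}^{m-1} \tbf{A}_h(t,D_x) D_t^{m-h} \;-\; \sum_{h=0}^{m-1} \tbf{A}_h(t,D_x)\, D_t^{m-1-h} A(t,D_x).
\]
The first sum is already in the ordered form (all $D_t$'s on the right), so the entire issue is to bring $D_t^{m-1-h}$ past $A(t,D_x)$ in the second sum.

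For this I would invoke the iterated operator Leibniz formula
\[
D_t^{m-1-h} A(t,D_x) = \sum_{k=0}^{m-1-h} \binom{m-1-h}{k} (D_t^k A)(t,D_x)\, D_t^{m-1-h-k},
\]
proved by induction on $m-1-h$ starting from $[D_t, A(t,D_x)] = (D_t A)(t,D_x)$. I would then split the resulting double sum into the $k=0$ piece and the $k \ge 1$ piece. The $k=0$ piece is $\sum_h \tbf{A}_h(t,D_x) A(t,D_x) D_t^{m-1-h}$; combined with the first sum, it reproduces at the operator level the symbol-level identity
\[
\adj(I_m \tau - A(t,\xi))(I_m \tau - A(t,\xi)) = \sum_{h=0}^{m} c_h(t,\xi) I_m \tau^{m-h}
\]
from Appendix \ref{sec:Appendix}. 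Since every $D_t$ sits to the right of all $t$-dependent coefficients, this identity lifts directly to operators and yields exactly the ordered determinant $\det(I_m D_t - A(t,D_x))u$, i.e., the intended principal part of the reduced scalar equation.

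The remainder $B(t,D_t,D_x)u$ is therefore the full $k \ge 1$ contribution,
\[
-\sum_{h=0}^{m-1}\sum_{k=1}^{m-1-h}\binom{m-1-h}{k}\,\tbf{A}_h(t,D_x)(D_t^k A)(t,D_x)\, D_t^{m-1-h-k} u,
\]
and the final step is purely combinatorial: set $h' := h+k-1$, so $k = h'+1-h$. The constraints $1 \le k \le m-1-h$ become $h \le h' \le m-2$, which in particular forces $h \le m-2$; reading off the exponents gives $D_t^{m-1-h-k} = D_t^{m-2-h'}$. Factoring $\tbf{A}_h(t,D_x)$ out of the inner sum then produces exactly \eqref{eq:LemLOT} and \eqref{eq:auxDef2}. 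I expect this reindexing to be the only delicate point; the rest is just the operator Leibniz rule together with the algebraic identity recalled in the appendix.
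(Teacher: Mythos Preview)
Your proposal is correct and follows essentially the same route as the paper: expand the adjunct, apply the Leibniz rule to $D_t^{m-1-h}A(t,D_x)$, split off the $k=0$ terms, and reindex the $k\ge 1$ remainder via $h'=h+k-1$ to obtain \eqref{eq:LemLOT}--\eqref{eq:auxDef2}. The only difference is in the verification that the $k=0$ piece together with the first sum yields $\det(I_mD_t-A(t,D_x))$: the paper does this by the explicit recursion $\tbf{A}_h A=\tbf{A}_{h+1}-\sigma_{h+1}^{(m)}(\lambda)I_m$ and a telescoping sum, whereas you lift the symbol identity from the appendix directly, noting that all $D_t$'s already sit to the right of the $t$-dependent coefficients---a cleaner but equivalent justification.
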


\begin{proof} From Proposition \ref{prop:FormulaLOT} and Leibniz rule, we have 
\begin{equation} \label{eq:ComputeLOT1}
	\begin{aligned}
	& \adj(I_m D_t - A(t,D_x))(I_m D_t u - A(t,D_x)u) \\
	& \qquad =  \sum\limits_{h = 0}^{m-1} \tbf{A}_{h}(t,D_x) D_t^{m-1-h} (I_m D_t u - A(t,D_x)u) \\
	& \qquad =  \sum\limits_{h = 0}^{m-1} \tbf{A}_{h}(t,D_x) D_t^{m-h} u -  \sum\limits_{h =0}^{m-1} \tbf{A}_{h}(t,D_x) D_t^{m-1-h} (A(t,D_x)u) \\
	& \qquad = \sum\limits_{h =0}^{m-1} \tbf{A}_{h}(t,D_x) D_t^{m-h} u \\
	& \qquad \quad - \sum\limits_{h =0}^{m-1} \tbf{A}_{h}(t,D_x) \sum_{h'=0}^{m-1-h} \binom{m-1-h}{h'}(D_t^{h'}A)(t,D_x)D_t^{m-1-h-h'}u.
	\end{aligned}
	\end{equation} 
	Now we write the second summand in the last equation in \eqref{eq:ComputeLOT1} as $Xu+Yu$ where $Xu$ contains all terms with $h'=0$ and \begin{equation}
	\begin{aligned}
	\label{formula_Y}
		Yu & = -\sum\limits_{h =0}^{m-1} \tbf{A}_{h}(t,D_x) \sum_{h'=1}^{m-1-h} \binom{m-1-h}{h'}(D_t^{h'}A)(t,D_x)D_t^{m-1-h-h'}u\\
		&= -\sum\limits_{h =0}^{m-2} \tbf{A}_{h}(t,D_x) \sum_{h'=1}^{m-1-h} \binom{m-1-h}{h'}(D_t^{h'}A)(t,D_x)D_t^{m-1-h-h'}u.
		\end{aligned}
	\end{equation} 
	By replacing $h'$ with $h'+1-h$ in the second sum in \eqref{formula_Y} we get
	\[
	Yu=-\sum\limits_{h =0}^{m-2} \tbf{A}_{h}(t,D_x) \sum_{h'=h}^{m-2} \binom{m-1-h}{h'+1-h} (D_t^{h'+1-h}A)(t,D_x)D_t^{m-2-h'}u
	\]
and then by \eqref{eq:auxDef2} we conclude that $Yu=B(t,D_t,D_x)u$ as desired. It remains to show that \begin{equation} \label{eq:aux6}
		\sum\limits_{h =0}^{m-1} \tbf{A}_{h}(t,D_x) D_t^{m-h} u + Xu = \det(I_mD_t - A(t,D_x))u.
	\end{equation} 
	By \eqref{eq:AuxEqDef1}, we obtain \begin{equation*}
	\tbf{A}_{h}(t,D_x) A(t,D_x) = \tbf{A}_{h+1}(t,D_x) - \sigma_{h+1}^{(m)}(\lambda) I_m
	\end{equation*} and, thus, \begin{equation*}\begin{aligned}
	X &= -\sum_{h=0}^{m-1} \tbf{A}_h(t,D_x)A(t,D_x)D_t^{m-1-h} \\
	&= -\sum_{h=1}^{m} \tbf{A}_{h}(t,D_x)D_t^{m-h} + \underbrace{\sum_{h=1}^{m}\sigma_{h}^{(m)}(\lambda) I_m D_t^{m-h}}_{=\det(I_m D_t - A(t,D_x))-I_mD_t^m\,(\text{see}\, \eqref{eq:CharPolDef})}.
	\end{aligned}
	\end{equation*} Using that $\tbf{A}_m =0$ (thanks to the Cayley-Hamilton theorem, see \eqref{eq:CH}) and $\tbf{A}_0 = I_m$, we obtain \eqref{eq:aux6} which concludes the proof. $\blacksquare$ \end{proof}

It will be convenient for the description of some important matrices in this paper to rewrite the lower order terms in a different way. More precisely, we have the following corollary. 
\begin{corollary} \label{cor:LOT}
	We can write the lower order term in \eqref{eq:LemLOT} as
	\begin{equation} \label{eq:GoodLOTdef}
	B(t,D_t,D_x) = -\sum_{h=0}^{m-2} \tbf{B}_{h+1}(t,D_x)D_t^{h},
	\end{equation} where \begin{equation} \label{eq:GoodLOTdef2}
	\tbf{B}_{h+1}(t,D_x) = \sum_{h'=0}^{m-2-h} \binom{m-1-h'}{h} \tbf{A}_{h'}(t,D_x)(D_t^{m-1-h-h'}A)(t,D_x)
	\end{equation} and $\tbf{A}_{h'}(t,D_x)$ is given by \eqref{eq:AuxEqDef1}.
\end{corollary}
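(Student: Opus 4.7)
The corollary is essentially a rewriting/reorganisation of the double sum defining $B(t,D_t,D_x)$, so my plan is to verify it by direct manipulation of indices and a change in the order of summation.

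First, I would substitute the definition \eqref{eq:auxDef2} of $\tbf{A}'_h$ into \eqref{eq:LemLOT}, obtaining the double sum
\[
B(t,D_t,D_x) = -\sum_{h=0}^{m-2} \sum_{h'=h}^{m-2} \binom{m-1-h}{h'+1-h} \tbf{A}_{h}(t,D_x) (D_t^{h'+1-h}A)(t,D_x) D_t^{m-2-h'}.
\]
The goal is to collect the terms according to the remaining power of $D_t$ at the end. Thus I would introduce the change of variable $k = m-2-h'$, so that $D_t^{m-2-h'}=D_t^k$ and the exponent on the differentiated $A$ becomes $h'+1-h = m-1-h-k$. The summation range $h \le h' \le m-2$ becomes $0 \le k \le m-2-h$, and the binomial coefficient becomes $\binom{m-1-h}{m-1-h-k}=\binom{m-1-h}{k}$.

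Next, I would swap the order of summation. The original constraints $0 \le h \le m-2$ and $0 \le k \le m-2-h$ are equivalent to $0 \le k \le m-2$ and $0 \le h \le m-2-k$, so after swapping I get
\[
B(t,D_t,D_x) = -\sum_{k=0}^{m-2} \Biggl( \sum_{h=0}^{m-2-k} \binom{m-1-h}{k} \tbf{A}_{h}(t,D_x) (D_t^{m-1-h-k}A)(t,D_x)\Biggr) D_t^{k}.
\]
Relabelling $k \mapsto h$ on the outside and $h \mapsto h'$ on the inside, this is exactly \eqref{eq:GoodLOTdef} with $\tbf{B}_{h+1}$ given by \eqref{eq:GoodLOTdef2} (noting that $\binom{m-1-h'}{m-1-h'-h}=\binom{m-1-h'}{h}$ by the symmetry of the binomial coefficient, which reconciles my intermediate form with the one in the statement).

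I do not expect any genuine obstacle: the corollary is a bookkeeping identity, with the only subtlety being correct tracking of the summation range after the index shift and the use of binomial symmetry. The $D_t^{m-1-h-h'}$ applied to $A$ here means the pure time derivative acting on the coefficient matrix, so no operator/symbol ordering issue arises when moving these factors around, since $\tbf{A}_{h'}(t,D_x)$ commutes with the scalar $D_t^k$ acting only on $u$.
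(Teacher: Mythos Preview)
Your proposal is correct and follows essentially the same approach as the paper: both arguments substitute \eqref{eq:auxDef2} into \eqref{eq:LemLOT}, interchange the order of summation, and relabel indices, invoking the binomial symmetry $\binom{m-1-h'}{m-1-h'-h}=\binom{m-1-h'}{h}$ along the way. The only cosmetic difference is that you perform the substitution $k=m-2-h'$ before swapping the sums, whereas the paper swaps first and then substitutes; the net computation is identical.
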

\begin{proof}
	Formula \eqref{eq:GoodLOTdef} follows from \eqref{eq:LemLOT} by interchanging the order of the sums appropriately. Indeed, we have, using \eqref{eq:AuxEqDef1} and \eqref{eq:auxDef2}, that \begin{equation} \label{eq:ComputeBDt}
	\begin{aligned}
	& B(t,D_t,D_x) \\
	&\quad = - \sum_{h=0}^{m-2} \tbf{A}_h(t,D_x) \sum_{h'=h}^{m-2} \binom{m-1-h}{h'+1-h} (D_t^{h'+1-h}A)(t,D_x)D_t^{m-2-h'} \\
	&\quad = -\sum_{h'=0}^{m-2} \underbrace{\sum_{h=0}^{h'} \tbf{A}_h(t,D_x) \binom{m-1-h}{h'+1-h} (D_t^{h'+1-h}A)(t,D_x)}_{=: \tbf{B}_{m-1-h'}(t,D_x)} D_t^{m-2-h'} \\
	& \quad = - \sum_{h=0}^{m-2} \tbf{B}_{h+1}(t,D_x)D_t^h ,
	\end{aligned}
	\end{equation} with \begin{equation*}
		\tbf{B}_{h+1}(t,D_x) = \sum_{h'=0}^{m-2-h} \binom{m-1-h'}{h} \tbf{A}_{h'}(t,D_x)(D_t^{m-1-h-h'}A)(t,D_x).
	\end{equation*}
 Note that in computing $\tbf{B}_{h+1}$ in the last line of \eqref{eq:ComputeBDt}, we use the binomial identity $\binom{m-1-h}{m-1-h-k} = \binom{m-1-h}{k}$ and reorder the summation. This completes the proof after relabelling summation indices.
$\blacksquare$ 
 \end{proof}
 Note that by rewriting the lower order terms as in Corollary \ref{cor:LOT} we clearly see that $B(t,D_t,D_x)$ is of order $m-2$ in $D_t$ rather than of order $m-1$.
As explanatory examples we give a closer look to the operator $B(t,D_t,D_x)$ in the cases $m=2$ and $m=3$.

\begin{example}
	Consider $m=2$: The sum in \eqref{eq:GoodLOTdef} has only one term. We have \begin{equation*}
		\tbf{B}_1(t,D_x) = \tbf{A}_{0}(t,D_x)(D_tA)(t,D_x)
	\end{equation*} with $\tbf{A}_{0}(t,D_x) = \sigma_{0}^{(2)}(\lambda)A^0(t,D_x) = I_2$ (see Lemma \ref{lem:AdjunctRep}).  
	\end{example}

\begin{example} Consider $m=3$. The sum in \eqref{eq:GoodLOTdef} has two terms. We have \begin{equation*}
	\begin{aligned}
	\tbf{B}_1(t,D_x) &= \sum_{h'=0}^1 \binom{2-h'}{0}\tbf{A}_{h'}(t,D_x)(D_t^{2-h'}A)(t,D_x),  \\
	&= \tbf{A}_{0}(t,D_x)(D_t^2A)(t,D_x) + \tbf{A}_1(t,D_x)(D_t A)(t,D_x), \\
	&= (D_t^2 A)(t,D_x) + (A(t,D_x)-\tr(A)(t,D_x)I_3)(D_tA)(t,D_x), 
	\end{aligned} \end{equation*} and \[ 
	\tbf{B}_2(t,D_x) = 2 \tbf{A}_0(t,D_x) (D_tA)(t,D_x) = 2 (D_tA)(t,D_x).
	\] Here  we used the fact that $\tbf{A}_{0}(t,D_x) = \sigma_{0}^{(3)}(\lambda)A^0(t,D_x) = I_3$ and $\sigma_1^{(3)}(\lambda) = -\tr(A)(t,D_x)$ (see Lemma \ref{lem:AdjunctRep}). 
\end{example}

Corollary \ref{cor:LOT} completes Step $2$ of our proof and allows us to transform \eqref{CauchyP} into
 \begin{equation}
 \label{system_m}
 \begin{aligned}
	\adj(I_mD_t - A(t,D_x))(I_mD_t - &A(t,D_x))u \\ 
	&= \delta(t,D_t,D_x)I_mu + B(t,D_t,D_x)u=0,
	\end{aligned}
\end{equation} where $\delta(t,D_t,D_x)$ has symbol $\det(I_m\tau - A(t,\xi))$ and $B(t,D_t,D_x)$ is given by \eqref{eq:GoodLOTdef}. Note that $\delta(t,D_t,D_x)$ is the scalar operator \begin{equation*}
	D_t^m + \sum_{h=1}^{m} c_{h}(t,D_x)D_t^{m-h},
\end{equation*} with $c_{h}(t,\xi)$ homogeneous polynomial of order $h$ with respect to $\xi$ and therefore $\delta(t,D_t,D_x)I_m$ is a decoupled system of $m$ identical scalar differential operators of order $m$ while $B(t,D_t,D_x)$ is a system of differential operators of order $m-1$. As mentioned before, the $c_h(t,\xi)$ are the coefficients of the characteristic polynomial of $A(t,\xi)$, see Appendix \ref{sec:Appendix}.

\subsection{Step 3: Reduction to a first order system of pseudodifferential equations} \label{sec:Reduction}

We now transform the system in \eqref{system_m} into a system of pseudodifferential equations by following Taylor in \cite{Taylor:81}. More precisely, we transform each $m$-th order scalar equation in  $\delta(t,D_t,D_x)I_m$ into a first order pseudodifferential system in Sylvester form. In this way we obtain $m$ systems with identical Sylvester matrix which can be put together in block-diagonal form obtaining a block-diagonal $m^2\times m^2$ matrix with $m$ identical Sylvester blocks. The precise structure of the lower order terms will be worked out in the next subsection. To carry out this transformation, we set \begin{equation}
\label{trans_T}
\begin{aligned}
U &= (U_1,\dots,U_m)^T \in \R^{m^2} \\
U_i &:= \left(D_t^{j-1}\Dx^{m-j}u_i \right)_{j=1,\dots,m} \in \R^m, \quad i=1,\dots,m,
\end{aligned}
\end{equation} where the $u_i$ are the components of the original vector $u$ in \eqref{CauchyP}. We can rewrite the Cauchy problem for \eqref{system_m} as  
\begin{equation} \label{eq:TrafoSys}
	\left\{ \begin{array}{l}
	D_t U = \pazocal A(t,D_x) U + \pazocal B(t,D_x)U, \\
	\left. U \right|_{t=0} = U_0=(U_{0,1}, \cdots, U_{0,m})^T,
	\end{array} \right.
\end{equation}
 where the components $U_{0,i}$  of  the $m^2$-column vector $U_0$ are given by
 \[
 U_{0,i}=\left(D_t^{j-1}\Dx^{m-j}u_{i}(0,x)\right)_{j=1,\cdots, m},
 \]
and $u$ is the solution of the Cauchy problem \eqref{CauchyP} with $u(0,x)=u_0$. Passing now to analyse the matrices $\pazocal A(t,D_x)$ and $\pazocal{B}(t,D_x)$, we have that $\pazocal{A}(t,D_x)$ is an $m^2 \times m^2$ block diagonal matrix of $m$ identical blocks of size $m \times m$ of the type
 \begin{equation} \label{eq:StructurePP}
	\Dx \begin{pmatrix}
	0 & 1 & 0 & \cdots & 0 \\
	0 & 0 & 1 & \cdots & 0 \\
	\vdots & \vdots & \vdots & \ddots & 0 \\
	\vdots & \vdots & \vdots & \cdots & 1 \\
	-c_{m}(t,D_x)\Dx^{-m} & -c_{m-1}(t,D_x)\Dx^{-m+1} & \dots & \dots & -c_1(t,D_x)\Dx^{-1}
	\end{pmatrix}.
\end{equation} 
and the matrix $\pazocal B(t,D_x)$ is composed of $m$ matrices of size $m \times m^2$ as follows:
 \begin{equation} \label{eq:LOTFOS}
	\begin{pmatrix}
	0 &0 & 0 & \dots & 0 & 0 \\
	0 &0 & 0 & \dots & 0 & 0 \\
	\vdots & \vdots & \vdots & \vdots & \vdots & \vdots \\
	l_{i,1}(t,D_x) & l_{i,2}(t,D_x) & \hdots & \hdots & l_{i,m^2-1}(t,D_x) & l_{i,m^2}(t,D_x),
	\end{pmatrix}
\end{equation} $i=1,\dots,m$. Note that the entries of the matrices $\pazocal{A}(t,D_x)$ and $\pazocal{B}(t,D_x)$ are pseudodifferential operators of order $1$ and $0$, respectively.

\subsection{Step 4: Structure of the matrix $\pazocal{B}(t,D_x)$ of the lower order terms} \label{sec:StructBtxi}

To analyse the structure of the $m^2\times m^2$ matrix  $\pac B(t,D_x)$ we recall that it is obtained from  the $m\times m$ matrix $B(t,D_t,D_x)$ in \eqref{system_m} via the transformation \eqref{trans_T}.

From Corollary \ref{cor:LOT} we have that
  \begin{equation} \label{eq:AuxLOT}
	B(t,D_t,D_x)u = \left(- \sum_{h=0}^{m-2} \sum_{j=1}^m b_{ij}^{(h+1)}(t,D_x)D_t^h u_j \right)_{i=1,\dots,m},
\end{equation} where the $b_{ij}^{(h+1)}(t,D_x)$ denote the $(i,j)$-element of $\tbf{B}_{h+1}(t,D_x)$ in \eqref{eq:GoodLOTdef}. By the previously described transform \eqref{trans_T}, we obtain that \begin{eqnarray*}
	D_t^m u_i = -\sum_{h=0}^{m-1} c_{m-h}(t,D_x)D_t^h u_i + \sum_{j=1}^{m} \sum_{h=0}^{m-2} b^{(h+1)}_{ij}(t,D_x)D_t^h u_j
\end{eqnarray*} and, thus, see that the coefficients $b_{ij}^{(1)}(t,D_x)$ in \eqref{eq:AuxLOT} will be associated to $l_{i,1+(j-1)m}(t,D_x)$ for $j=1,\dots,m$, the coefficients $b_{ij}^{(2)}(t,D_x)$ to $l_{i,2+(j-1)m}(t,D_x)$ for $j=1,\dots,m$ and so forth. In particular, we get that $l_{i,m+(j-1)m}(t,D_x) \equiv 0$ for $j=1,\dots,m$ which is due to the fact that \eqref{CauchyP} is homogeneous. As a general formula for the non-zero elements of $\pac B(t,D_x)$, we can write
\begin{equation} \label{l_ij}
	l_{i,h+1+(j-1)m}(t,D_x) = b_{ij}^{(h+1)}(t,D_x)\Dx^{1-m+h} 
\end{equation} for $j=1,\dots,m$ and $h=0,\dots,m-2$. \\

To avoid further complication of the notation, we consider the $b_{ij}^{(l)}(t,\xi)$ from now on as the by $\lara{\xi}^{l-m}$ scaled elements in \eqref{l_ij} if referenced as elements of $\pac B(t,\xi)$. \\

For the convenience of the reader, we conclude this section by illustrating the Steps 1-4 in the case $m=2$ and $m=3$. For simplicity, we take $x\in\R$.

\subsection{Steps 1--4 for $m=2$} \label{sec:ExCase2}

We consider the system \begin{equation} \label{eq:ExCPtbt}
D_t u - A(t)D_x  u = D_t \begin{pmatrix}
u_1 \\ u_2
\end{pmatrix} - \begin{pmatrix}
a_{11}(t) & a_{12}(t) \\
a_{21}(t) & a_{22}(t)
\end{pmatrix} D_x \begin{pmatrix}
u_1 \\ u_2
\end{pmatrix} = 0
\end{equation} for $(t,x) \in [0,T] \times \R$. Computing the adjunct of $I_2 \tau - A(t)\xi$  we obtain \begin{equation*}
\adj(I_2 \tau - A(t)\xi) = \begin{pmatrix}
\tau & 0 \\
0 & \tau
\end{pmatrix} - \begin{pmatrix}
a_{22}(t) & -a_{12}(t) \\
-a_{21}(t) & a_{11}(t)
\end{pmatrix} \xi = I_2 \tau - \adj(A)(t)\xi.
\end{equation*}
Applying the corresponding operator to \eqref{eq:ExCPtbt}, we obtain
 \begin{eqnarray}
	\nonumber \left(I_2 D_t - \adj(A)D_x \right) \left( I_2 D_t - A(t)D_x u\right) &=& \delta(t,D_t,D_x)u - (D_tA)(t)D_x u \\
	\label{eq:TrafoExample1} &=& \delta(t,D_t,D_x)u - \tbf{B}_1(t,D_x)u,
\end{eqnarray} where $\tbf{B}_1(t,D_x)$ is given by \eqref{eq:GoodLOTdef2} with $h=0$. 

Now we set 
\begin{eqnarray*}
U &=& (U_1 , U_2, U_3, U_4)^T = (\Dx u_1, D_t u_1 , \Dx u_2, D_t u_2)^T \\
D_t U &=& ( \Dx U_2, D_t^2 u_1 , \Dx U_4, D_t^2 u_2)^T.
\end{eqnarray*} and, thus, get the system \begin{eqnarray*}
D_t U = \pazocal A(t,D_x) U + \pazocal B(t,D_x) U,
\end{eqnarray*} where $\pac A(t,D_x)$ is a $4 \times 4$ block diagonal matrix, as in \eqref{eq:StructurePP}, with the block  \begin{equation*}
\Dx \begin{pmatrix}
0 & 1 \\
-{\rm det}(A)(t)D_x^2\Dx^{-2} & \tr(A)(t)D_x\Dx^{-1}
\end{pmatrix} \end{equation*} and $\pac B(t,D_x)$ is a $4 \times 4$ matrix of  two $2 \times 4$ blocks \begin{equation*}
\pazocal{B}_i(t,D_x)=
\begin{pmatrix}
0 & 0 & 0 & 0 \\
D_t a_{1i}(t)D_x\Dx^{-1} & 0 & D_t a_{2i}(t)D_x\Dx^{-1} & 0
\end{pmatrix}, \quad i=1,2.
\end{equation*} Note that the entries of the matrix $\pazocal{B}_i(t,D_x)$ can be obtained from \eqref{l_ij} by setting $h=0$ and $j=1,2$.

\subsection{Steps 1--4 for $m=3$} \label{sec:ExCase3}

We consider \begin{equation*}
D_t \begin{pmatrix}
u_1 \\ u_2 \\ u_3
\end{pmatrix} - \begin{pmatrix}
a_{11}(t) & a_{12}(t) & a_{13}(t) \\
a_{21}(t) & a_{22}(t) & a_{23}(t) \\
a_{31}(t) & a_{32}(t) & a_{33}(t)
\end{pmatrix} D_x \begin{pmatrix}
u_1 \\ u_2 \\ u_3
\end{pmatrix} = 0
\end{equation*} for $(t,x) \in [0,T] \times \R$. We have \begin{equation*}
\adj(I_3 \tau - A(t)\xi) = I_3 \tau^2 + (A(t)-\tr(A)(t)I_3)\xi \tau + \adj(A)(t)\xi^2
\end{equation*} and therefore \begin{equation*}
\adj(I_3 D_t - A(t)D_x) = I_3 D_t^2 + (A(t)-\tr(A)(t))I_3 D_tD_x + \adj(A)(t)D_x^2.
\end{equation*} Applying this operator to the original system, we obtain \begin{eqnarray*}
	\adj(I_3 D_t - A(t)D_x)(I_3 D_t - A(t)D_x)u = \delta(t,D_t,D_x)u + B(t,D_t,D_x)u,
\end{eqnarray*} where we used the fact that $\adj(A) = A^2 + c_1A + c_2I_3$ (see example Example \ref{ex:Adjm3}) and set \begin{equation} \label{eq:AuxCompare1} \begin{aligned}
B(t,D_t,D_x) &= - (D_t^2 A)(t)D_x - 2(D_t A)(t)D_xD_t + \tr(A)(t)(D_t A)(t)D_x^2 \\
& \qquad - A(t)(D_t A)(t)D_x^2, \\
&= -\tbf{B}_1(t,D_x) - \tbf{B}_2(t,D_x)D_t,
\end{aligned}
\end{equation} corresponding to \eqref{eq:GoodLOTdef}. Now we introduce \begin{eqnarray*}
	&U &= (U_1, U_2, U_3)^T \in \R^9 \,\,\text{with} \\
	&U_j &= ( \Dx^2 u_j, D_t \Dx u_j, D_t^2 u_j),\quad j=1,2,3.
\end{eqnarray*} Thus, we obtain \begin{equation*}
D_t U = \pazocal A(t,D_x) U + \pazocal B(t,D_x)U,
\end{equation*} where $\pac A(t,D_x)$ is a block diagonal matrix with three blocks of the type

 \begin{equation*}
\Dx \begin{pmatrix}
0 & 1 & 0 \\
0 & 0 & 1 \\
-c_3(t,D_x)\lara{D_x}^{-3} & -c_2(t,D_x) \lara{D_x}^{-2}& -c_1(t,D_x)\lara{D_x}^{-1}
\end{pmatrix}. \end{equation*} By direct computation (see Appendix \ref{sec:Appendix}), we get that $c_{h}(t,D_x) = \sigma_h^{(3)}(\lambda)$, where \begin{eqnarray*}
\sigma_1^{(3)}(\lambda) &=& - \tr(A)(t,D_x) \\
\sigma_2^{(3)}(\lambda) 
&=& a_{11}(t)a_{22}(t)D_x^2 + a_{11}(t)a_{33}(t)D_x^2 + a_{22}(t)a_{33}(t)D_x^2 \\
&& \quad - a_{23}(t)a_{32}(t)D_x^2 - a_{12}(t)a_{21}(t)D_x^2 - a_{31}(t)a_{13}(t)D_x^2\\
\sigma_3^{(3)}(\lambda) &=& -\det(A)(t,D_x).
\end{eqnarray*} Indeed, since
\begin{equation*} \label{eq:Vieta}
	\det(I_3\tau - A) = \prod_{h=1}^3 (\tau-\lambda_i) =\sum_{h=0}^3 \sigma^{(3)}_h(\lambda) \tau^{3-h},
\end{equation*}
it follows that  
\begin{eqnarray*}
	&& \det(I_3\tau - A) = \tau^3 + \underbrace{(-a_{11}-a_{22}-a_{33})}_{\sigma_{1}^{(3)}(\lambda) = -\tr(A)} \tau^2 \\
	&& \quad + \underbrace{(a_{11}a_{22}-a_{12}a_{21}+a_{11}a_{33}-a_{13}a_{31}+a_{22}a_{33}-a_{23}a_{32})}_{\sigma_2^{(3)}(\lambda)} \tau \\
	&& \quad + \underbrace{(-a_{11}a_{22}a_{33}+a_{11}a_{23}a_{32}+a_{12}a_{21}a_{33} - a_{11}a_{23}a_{31} - a_{13}a_{21}a_{32} + a_{13}a_{22}a_{31})}_{\sigma_3^{(3)}(\lambda) = -\det(A)}.
\end{eqnarray*}

Finally, the matrix $\pac B(t,D_x)$ is made of three blocks of $3 \times 9$ matrices \begin{equation*}
\pac B_k(t,D_x)=
\begin{pmatrix}
0 & 0 & 0 & 0 & 0 & 0 & 0 & 0 & 0 \\
0 & 0 & 0 & 0 & 0 & 0 & 0 & 0 & 0 \\
b_{k1}^{(1)} & b_{k1}^{(2)} & 0 & b_{k2}^{(1)} & b_{k2}^{(2)} & 0 & b_{k3}^{(1)} & b_{k3}^{(2)} & 0
\end{pmatrix},
\end{equation*} $k=1,2,3$ which correspond to \eqref{eq:LOTFOS} via formula \eqref{l_ij}. More precisely, we get
\begin{equation}
\label{form_b_3}
\begin{split}
	b^{(1)}_{kj} &= (D_t^2 a_{kj} + 2D_t a_{kj} - \tr(A_0)D_t a_{kj})D_x\lara{D_x}^{-1},\\
 b^{(2)}_{kj}& = (a_{k1}D_t a_{1j} + a_{k2}D_t a_{2j} + a_{k3}D_t a_{3j})D_x^2\lara{D_x}^{-2},
 \end{split}
\end{equation}
for $k=1,2,3$ and $j=1,2$. The elements $b^{(1)}_{kj}$ and $b^{(2)}_{kj}$ can are the scaled $(k,j)$-elements of the matrices $\tbf{B}_1(t,D_x)$ and $\tbf{B}_2(t,D_x)$ from \eqref{eq:AuxCompare1} respectively.

\section{Energy estimate}
\label{sec_energy}

Now we apply the Fourier transform with respect to $x$ to the Cauchy problem in  \eqref{eq:TrafoSys} and set $\pazocal F_{x \rightarrow \xi}(U)(t,\xi) =: V(t,\xi)$. We then obtain 

 \begin{equation} \label{eq:FTSysTrafo} \left\{ \begin{aligned}[l]
& D_t V = \pazocal A(t,\xi) V + \pazocal B(t,\xi) V, \\
& \left. V\right|_{t=0} = V_0,
\end{aligned} \right. \end{equation} 
where $V_0=\widehat{U_0}.$ From now on, we will concentrate on \eqref{eq:FTSysTrafo} and the matrix 

\begin{equation*}
	\pazocal A_0(t,\xi) := \langle \xi \rangle^{-1} \pazocal A(t,\xi).
\end{equation*} 
Note that by construction of $\pazocal A(t,\xi)$, the matrix $\pazocal A_0(t,\xi)$ is made of $m$ identical Sylvester type blocks with eigenvalues $\lambda_l(t,\xi)$, $l=1,\dots,m$, where $\lambda_l(t,\xi)\lara{\xi}$, $l=1,\dots, m$ are the rescaled eigenvalues of the original matrix $A(t,\xi)$ in \eqref{CauchyP}.

\subsection{Step 5: Computing the energy estimate}

Let $\pac{Q}_\eps^{(m)}(t,\xi)$ be the quasi-symmetriser of the matrix $\pazocal A_0(t,\xi)$. By Remark \ref{rem:QuasiSym} it will be a $m^2\times m^2$ block diagonal matrix with $m$ identical blocks given by the quasi-symmetriser $Q_\eps^{(m)}(t,\xi)$ of the defining block of $\pazocal A_0(t,\xi)$ (see Section \ref{SEC:qs} for definition and properties). Hence, we define the energy
 \begin{equation*}
	E_\eps(V)(t,\xi) = \big( \pac{Q}_\eps^{(m)}(t,\xi)V(t,\xi)| V(t,\xi) \big)
\end{equation*} where $(\cdot | \cdot)$ denotes the scalar product in $\R^{m^2}$. To improve the readability, we drop the dependencies on $t$ and $\xi$ in the following unless we find it important to stress. By direct computations we have \begin{eqnarray*}
\partial_t E_\eps &=&(\partial_t\pac{Q}^{(m)}_\eps V | V)+ i(\pac{Q}^{(m)}_\eps D_tV| V)-i(\pac{Q}^{(m)}_\eps V | D_tV)\\
&=&(\partial_t\pac{Q}^{(m)}_\eps V | V)+i(\pac{Q}^{(m)}_\eps(\pac A  V+\pac BV) | V)-i(\pac{Q}^{(m)}_\eps V | \pac A V+ \pac BV)\\
&=&(\partial_t\pac{Q}^{(m)}_\eps V | V)+i\lara{\xi}((\pac{Q}^{(m)}_\eps \pac A_0- \pac A_0^\ast \pac{Q}^{(m)}_\eps)V | V)\\
&& \quad +i((\pac{Q}^{(m)}_\eps \pac B- \pac B^\ast \pac{Q}^{(m)}_\eps)V | V).
\end{eqnarray*}

It follows that \begin{equation} \label{eq:EE} \begin{aligned}
\partial_t E_\eps &\le \frac{|(\partial_t\pac{Q}^{(m)}_\eps V | V)|E_\eps}{(\pac{Q}^{(m)}_\eps V | V)}+|\lara{\xi}((\pac{Q}^{(m)}_\eps \pac A_0- \pac A_0^\ast \pac{Q}^{(m)}_\eps)V | V)| \\
& \quad +|((\pac{Q}^{(m)}_\eps \pac B- \pac B^\ast \pac{Q}^{(m)}_\eps)V | V)|.
\end{aligned} \end{equation}

By Proposition \ref{prop_qs} it follows that $\pac{Q}_\eps^{(m)}(t,\xi)$ is a family of $C^\infty$, non-negative Hermitian matrices such that \begin{equation*} \label{eq:p1}
\pac{Q}_\eps^{(m)}(t,\xi)=\pac{Q}_0^{(m)}(t,\xi)+\eps^2 \pac{Q}_1^{(m)}(t,\xi)+...+\eps^{2(m-1)}\pac{Q}_{m-1}^{(m)}(t,\xi).
\end{equation*} In addition, by the same proposition, there exists a constant $C_m>0$ such that for all $t\in[0,T]$, $\xi\in\R^n$ and $\eps\in(0,1]$ the following estimates hold uniformly in $V \in \R^{m^2}$: \begin{eqnarray}
	\label{eq:p2} & C_m^{-1}\eps^{2(m-1)}|V|^2\le (\pac{Q}^{(m)}_\eps V|V)\le C_m|V|^2,\\
	\label{eq:p3} & |((\pac{Q}_\eps^{(m)}\pac A_0-\pac A_0^\ast \pac{Q}_\eps^{(m)}) V|V)|\le C_m\eps (\pac{Q}_\eps^{(m)} V|V)
\end{eqnarray} 
Finally, the hypothesis \eqref{eq:CondKinSp} on the eigenvalues and Proposition \ref{prop_SM} ensure that the family
$$\{ \pac{Q}_\eps^{(m)}(t,\xi):\, \eps\in(0,1],\, t\in[0,T],\, \xi\in\R^n\}$$ is nearly diagonal.

Note that since the entries of the matrix $A(t,\xi)$ in \eqref{CauchyP} are $C^\infty$ with respect to $t$, the matrices $\pazocal A(t,\xi)$ and $\pazocal B(t,\xi)$ as well as the quasi-symmetriser  have the same regularity properties.

We now proceed by estimating the three summands in the right-hand side of  \eqref{eq:EE}. Due to the block diagonal structure of the matrices involved we can make use of the proof strategy adopted for the scalar case in \cite[Subsections 4.1, 4.2, 4.3]{GR}.

\subsection{First term}
\label{first_term}

Let $k\ge 1$. We write $\frac{|(\partial_t\pac{Q}^{(m)}_\eps V|V)|}{(\pac{Q}^{(m)}_\eps V| V)}$ as
\[
\frac{|(\partial_t\pac{Q}^{(m)}_\eps V,V)|}{(\pac{Q}^{(m)}_\eps V | V)^{1-1/k}(\pac{Q}^{(m)}_\eps V, V)^{1/k}}.
\]
From \eqref{eq:p2} we have \begin{eqnarray*}
\frac{|(\partial_t\pac{Q}^{(m)}_\eps V|V)|}{(\pac{Q}^{(m)}_\eps V| V)}
&\le& \frac{|(\partial_t\pac{Q}^{(m)}_\eps V|V)|}{(\pac{Q}^{(m)}_\eps V | V)^{1-1/k}(C_m^{-1}\eps^{2(m-1)}|V|^2)^{1/k}}\\
&\le& C_m^{1/k}\eps^{-2(m-1)/k}\frac{|(\partial_t \pac{Q}^{(m)}_\eps V|V)|}{(\pac{Q}^{(m)}_\eps V | V)^{1-1/k}|V|^{2/k}}.
\end{eqnarray*}
A block-wise application of Lemma \ref{lem_new} yields the estimate
\begin{eqnarray*}
\int_{0}^T\frac{|(\partial_t\pac{Q}^{(m)}_\eps V|V)|}{(\pac{Q}^{(m)}_\eps V| V)}\, dt
&\le& C_m^{1/k}\eps^{-2(m-1)/k}C_T\sup_{\xi\in\R^n}\Vert \pac{Q}_\eps(\cdot,\xi)\Vert^{1/k}_{{C}^k([0,T])}\\
&\le& C_1\eps^{-2(m-1)/k},
\end{eqnarray*}
for all $\eps\in(0,1]$. Setting $\frac{|(\partial_t\pac{Q}^{(m)}_\eps V | V)|}{(\pac{Q}^{(m)}_\eps V | V)} =: K_\eps(t,\xi)$, we can conclude that
\begin{equation*} \label{eq:Ke}
\frac{|(\partial_t \pac{Q}^{(m)}_\eps V|V)|E_\eps}{(Q^{(m)}_\eps V | V)}=K_\eps(t,\xi)E_\eps,
\end{equation*} with \begin{equation*}\label{EQ:Ke2}
\int_{0}^T K_\eps(t,\xi)\, dt\le C_1\eps^{-2(m-1)/k}.
\end{equation*}

\subsection{Second term}
\label{second_term}

From the property \eqref{eq:p3} we immediately have that
\[
|\lara{\xi}((\pac{Q}^{(m)}_\eps \pac A_0- \pac A_0^\ast \pac{Q}^{(m)}_\eps)V|V)|\le C_m\eps\lara{\xi}(\pac{Q}_\eps^{(m)}V|V)\le C_2\eps\lara{\xi}E_\eps.
\]

\subsection{Third term}
\label{third_term}

In this subsection, we treat the third term on the right-hand side of \eqref{eq:EE}.  By Proposition \ref{prop_qs}(iv) and the definition of the matrix $\pazocal B(t,\xi)$ we have that 
\begin{eqnarray*}
&&((\pac{Q}^{(m)}_\eps \pac B- \pac B^\ast \pac{Q}^{(m)}_\eps)V|V) = ((\pac{Q}_0^{(m)}\pac B- \pac B^\ast \pac{Q}_0^{(m)})V|V)\\
&& \qquad \qquad +\eps^2\sum_{i=1}^m((\pac{Q}^{(m-1)}_\eps(\pi_i\lambda)^\sharp \pac B-\pac B^\ast \pac{Q}^{(m-1)}_\eps(\pi_i\lambda)^\sharp)V|V),
\end{eqnarray*} 
with $\pac{Q}^{(m-1)}_\eps(\pi_i\lambda)^\sharp$ block diagonal matrix with $m$ blocks ${Q}^{(m-1)}_\eps(\pi_i\lambda)^\sharp$ as defined in Proposition \ref{prop_qs}(iv). Note that
\[
(\pac{Q}^{(m-1)}_\eps(\pi_i\lambda)^\sharp \pac B - \pac B^\ast \pac{Q}^{(m-1)}_\eps(\pi_i\lambda)^\sharp)= 0,
\]
for all $i=1,\dots,m$, due to the structure of zeros in $\pac B$ and in $\pac{Q}^{(m-1)}_\eps(\pi_i\lambda)^\sharp$. Thus,
 \begin{equation*}
((\pac{Q}^{(m)}_\eps \pac B - \pac B^\ast \pac{Q}^{(m)}_\eps)V|V)=((\pac{Q}_0^{(m)} \pac B - \pac B^\ast \pac{Q}_0^{(m)})V|V).
\end{equation*} 
Since from Proposition \ref{prop_qs}(i) the quasi-symmetriser is made of non-negative matrices we have that 
\[
(\pac{Q}^{(m)}_0 V,V)\le E_\eps.
\]
It is purpose of the next section to find suitable Levi conditions on $\pac B(t,\xi)$ such that 
 \begin{equation} \label{eq:cB}
|((\pac{Q}_0^{(m)} \pac B - \pac B^\ast \pac{Q}_0^{(m)})V | V)|\le C_3 (\pac{Q}^{(m)}_0 V | V)\le C_3 E_\eps
\end{equation}
holds for some constant $C_3>0$ independent of $t\in[0,T]$, $\xi\in\R^n$ and $V\in\C^{m^2}$. We will then formulate these Levi-type conditions in terms of the matrix $A$ in \eqref{CauchyP}.

\section{Estimates for the lower order terms}
\label{sec_lot}

We remind the reader of the fact that the $b_{ij}^{(l)}(t,\xi)$, if referenced as elements of $\pac B(t,\xi)$, are the by $\lara{\xi}^{l-m}$ scaled $(i,j)$-elements of $\tbf{B}_l(t,\xi)$ in \eqref{eq:GoodLOTdef}. See also Section \ref{sec:StructBtxi} for details. \\

To start, we rewrite $((\pac{Q}_0^{(m)} \pac B - \pac B^\ast \pac{Q}_0^{(m)})V | V)$ in terms of the matrix $\pac W^{(m)}$. Recall that from Section \ref{SEC:qs}, $\pac W^{(m)}$ is the $m^2\times m^2$ block diagonal matrix with $m$ identical blocks
\[
W^{(m)}=\begin{pmatrix}
W^{(m)}_1(\lambda)\\
\vdots\\
W^{(m)}_m(\lambda)
\end{pmatrix},
\]
with 
\[
W^{(m)}_i(\lambda)=\big(\sigma_{m-1}^{(m-1)}(\pi_i\lambda),...,\sigma_1^{(m-1)}(\pi_i\lambda),1\big),\quad 1\le i\le m.
\]

From Proposition \ref{prop_qs}(v) we have
\begin{eqnarray*}
((\pac{Q}_0^{(m)} \pac B - \pac B^\ast \pac{Q}_0^{(m)})V | V) &=& (m-1)!((\pac{W}^{(m)} \pac BV | \pac{W}^{(m)}V)-(\pac{W}^{(m)}V |\pac{W}^{(m)}\pac BV))\\
&=& 2i(m-1)!\ima (\pac{W}^{(m)}\pac B V | \pac{W}^{(m)}V).
\end{eqnarray*}
It follows that \begin{equation*}
	|((\pac{Q}_0^{(m)} B-B^\ast \pac{Q}_0^{(m)})V | V)|\le 2(m-1)!|\pac{W}^{(m)}BV||\pac{W}^{(m)}V|.
\end{equation*} Since \begin{equation*}
	(\pac{Q}_0^{(m)} V | V)=(m-1)!|\pac{W}^{(m)}V|^2,
\end{equation*} we have that if \begin{equation} \label{eq:cB2}
|\pac{W}^{(m)}\pac BV| \le C|\pac{W}^{(m)}V|
\end{equation} holds true for some constant $C>0$, independent of $t$, $\xi$ and $V$, then estimate \eqref{eq:cB} will hold true as well. 

In the sequel, for the sake of simplicity we will make use of the following notation: given $f$ and $g$ two real valued functions in the variable $y$, $f(y)\prec g(y)$ if there exists a constant $C>0$ such that $f(y)\le C g(y)$ for all $y$. More precisely, we will set $y=(t,\xi)$ or $y=(t,\xi,V)$. Thus, \eqref{eq:cB2} can be rewritten as 
\[
|\pac{W}^{(m)}\pac BV| \prec|\pac{W}^{(m)}V|.
\]
In analogy to the scalar case in \cite{GR} we will now focus on \eqref{eq:cB2}. Before proceeding with our general result, for advantage of the reader we will illustrate the main ideas leading to the Levi-type conditions on $\pac B$ in the case $m=2$ and $m=3$.

\subsection{The case $m=2$}
\label{lot_2_zone}

For simplicity we take $n=1$. From Subsection \ref{sec:ExCase2} and Subsection \ref{Ex_qs} we have that
\[
\pazocal B(t,\xi)=\begin{pmatrix}
	0 & 0 & 0 & 0 \\
	D_t a_{11}(t) & 0 & D_t a_{21}(t) & 0 \\
	0 & 0 & 0 & 0 \\
	D_t a_{12}(t) & 0 & D_t a_{22}(t) & 0
\end{pmatrix}\xi\lara{\xi}^{-1}
\]
and
\[
\pazocal W^{(2)}(t,\xi)=\begin{pmatrix}
		-\lambda_2 & 1 & 0 & 0 \\
		-\lambda_1 & 1 & 0 & 0 \\
		0 & 0 & -\lambda_2 & 1 \\
		0 & 0 & -\lambda_1 & 1 \\
	\end{pmatrix} ,
\]
respectively. We have \begin{eqnarray*}
	\pazocal W^{(2)} \pazocal B V &=& \begin{pmatrix}
		-\lambda_2 & 1 & 0 & 0 \\
		-\lambda_1 & 1 & 0 & 0 \\
		0 & 0 & -\lambda_1 & 1 \\
		0 & 0 & -\lambda_2 & 1 \\
	\end{pmatrix} \begin{pmatrix}
	0 & 0 & 0 & 0 \\
	D_t a_{11}(t) & 0 & D_t a_{21}(t) & 0 \\
	0 & 0 & 0 & 0 \\
	D_t a_{12}(t) & 0 & D_t a_{22}(t) & 0
\end{pmatrix} \xi\lara{\xi}^{-1}\begin{pmatrix}
V_1 \\ V_2 \\ V_3 \\ V_4
\end{pmatrix} \\
&=& \begin{pmatrix}
	D_t a_{11}(t) & 0 & D_t a_{21}(t) & 0 \\
	D_t a_{11}(t) & 0 & D_t a_{21}(t) & 0 \\
	D_t a_{12}(t) & 0 & D_t a_{22}(t) & 0 \\
	D_t a_{12}(t) & 0 & D_t a_{22}(t) & 0
\end{pmatrix} \begin{pmatrix}
V_1 \\ V_2 \\ V_3 \\ V_4
\end{pmatrix}\xi\lara{\xi}^{-1}
= \begin{pmatrix}
D_t a_{11}(t)V_1 + D_t a_{21}(t) V_3 \\
D_t a_{11}(t)V_1 + D_t a_{21}(t) V_3 \\
D_t a_{12}(t)V_1 + D_t a_{22}(t) V_3 \\
D_t a_{12}(t)V_1 + D_t a_{22}(t) V_3
\end{pmatrix}\xi\lara{\xi}^{-1}
\end{eqnarray*} and \begin{equation*}
\pazocal W ^{(2)}V = \begin{pmatrix}
-\lambda_2 & 1 & 0 & 0 \\
-\lambda_1 & 1 & 0 & 0 \\
0 & 0 & -\lambda_2 & 1 \\
0 & 0 & -\lambda_1 & 1 \\
\end{pmatrix} \begin{pmatrix}
V_1 \\ V_2 \\ V_3 \\ V_4
\end{pmatrix}  = \begin{pmatrix}
-\lambda_2 V_1 + V_2 \\
-\lambda_1 V_1 + V_2 \\
-\lambda_2 V_3 + V_4 \\
-\lambda_1 V_3 + V_4
\end{pmatrix}. 
\end{equation*}
Thus, we obtain that $|\pazocal W^{(2)} \pazocal B V|^ 2\prec |\pazocal W^{(2)}V|^2$ is equivalent to 
\begin{equation} 
\label{est_case2}\begin{aligned}
& |D_t a_{11}(t)V_1 + D_t a_{21}(t) V_3|^2\xi\lara{\xi}^{-1}
+ |D_t a_{12}(t)V_1 + D_t a_{22}(t) V_3|^2\xi\lara{\xi}^{-1} \\
& \quad\prec |-\lambda_2 V_1 + V_2|^2 +
|-\lambda_1 V_1 + V_2|^2 + |-\lambda_2 V_3 + V_4|^2
+ |-\lambda_1 V_3 + V_4|^2.
\end{aligned} \end{equation} 
We now estimate the left-hand side of \eqref{est_case2} from above and the right-hand side from below. We get
\begin{equation*} \label{eq:EstLC1}
\begin{aligned}
& |D_t a_{11}(t)V_1 + D_t a_{21}(t) V_3|^2
+ |D_t a_{12}(t)V_1 + D_t a_{22}(t) V_3|^2 \\
& \quad\prec \left( |D_t a_{11}(t)|^2 + |D_t a_{12}(t)|^2 \right)|V_1|^2 + \left( |D_t a_{21}(t)|^2 + |D_t a_{22}(t)|^2 \right) |V_3|^2
\end{aligned}	
\end{equation*} and, by using the inequality $|z_1|^2 + |z_2|^2 \ge \frac{1}{2} |z_1 - z_2|^2$, $z_1,z_2\in \C$, and the condition \eqref{eq:CondKinSp} on the eigenvalues,\begin{equation*} \label{eq:EstLC2}
\begin{aligned}
& |-\lambda_2 V_1 + V_2|^2 +
|-\lambda_1 V_1 + V_2|^2 + |-\lambda_2 V_3 + V_4|^2
+ |-\lambda_1 V_3 + V_4|^2 \\
& \quad \succ (\lambda_2-\lambda_1)^2|V_1|^2 + (\lambda_2-\lambda_1)^2|V_3|^2 \\
& \quad \succ (\lambda^2_1 + \lambda_2^2)|V_1|^2 + (\lambda^2_1 + \lambda_2^2)|V_2|^2.
\end{aligned}
\end{equation*} 
Combining the last two inequalities, we finally obtain that $|\pazocal W^{(2)} \pazocal B V|^2\prec |\pazocal W V|^2$ provided that \begin{equation} \label{cond:LevytypeEx} \begin{aligned}
	&(|D_t a_{11}(t)|^2 + |D_t a_{21}(t)|^2)\xi\lara{\xi}^{-1} \prec  \lambda_1^2(t,\xi) + \lambda_2^2(t,\xi), \\
	&(|D_t a_{12}(t)|^2 + |D_t a_{22}(t)|^2)\xi\lara{\xi}^{-1} \prec \lambda_1^2(t,\xi) + \lambda_2^2(t,\xi).
\end{aligned} \end{equation}

This is a Levi-type condition on the matrix of the lower order terms $\pazocal B$ written in terms of the entries of the original matrix $A$ in \eqref{CauchyP}. Note that by adopting the notations introduced in Subsection \ref{sec:ExCase3} for the matrix $\pazocal B$ in the case $m=2$ as well, i.e., 
\[
\pazocal B=\begin{pmatrix}
0 & 0 & 0 & 0\\
b_{11}^{(1)}(t) & 0 & b_{12}^{(1)}(t) &0 \\
0 & 0 & 0 & 0\\
b_{21}^{(1)}(t) & 0 & b_{22}^{(1)}(t) & 0
\end{pmatrix}
\]
the Levi-type conditions above can be written as
\[
\begin{aligned}
|b_{11}^{(1)}|^2 +|b_{21}^{(1)}|^2 &\prec  \lambda_1^2 +\lambda_2^2\\
|b_{12}^{(1)}|^2 +|b_{22}^{(1)}|^2 &\prec  \lambda_1^2 +\lambda_2^2,
\end{aligned}
\]
where $\lambda_1^2+\lambda_2^2$ is the entry $q_{11}$ of the symmetriser of the matrix $A_0=A\lara{\xi}^{-1}$.

\subsection{The case $m=3$}
\label{lot_3_zone}

We begin by recalling that from Subsection \ref{sec:ExCase3} the $9\times 9$ matrix $\pazocal B(t,\xi)$ is given by the $3\times 9$ matrices $\pazocal B_k(t,\xi)$, $k=1,2,3$, as follows:
\[
\pazocal B=\begin{pmatrix}
\pazocal B_1\\
\pazocal B_2\\
\pazocal B_3
\end{pmatrix}
=\begin{pmatrix}
0 & 0 & 0 & 0 & 0 & 0 & 0 & 0 & 0 \\
0 & 0 & 0 & 0 & 0 & 0 & 0 & 0 & 0 \\
b_{11}^{(1)}(t) & b_{11}^{(2)}(t) & 0 & b_{12}^{(1)}(t) & b_{12}^{(2)}(t) & 0 & b_{13}^{(1)}(t) & b_{13}^{(2)}(t) & 0\\
0 & 0 & 0 & 0 & 0 & 0 & 0 & 0 & 0 \\
0 & 0 & 0 & 0 & 0 & 0 & 0 & 0 & 0 \\
b_{21}^{(1)}(t) & b_{21}^{(2)}(t) & 0 & b_{22}^{(1)}(t) & b_{22}^{(2)}(t) & 0 & b_{23}^{(1)}(t) & b_{23}^{(2)}(t) & 0\\
0 & 0 & 0 & 0 & 0 & 0 & 0 & 0 & 0 \\
0 & 0 & 0 & 0 & 0 & 0 & 0 & 0 & 0 \\
b_{31}^{(1)}(t) & b_{31}^{(2)}(t) & 0 & b_{32}^{(1)}(t) & b_{32}^{(2)}(t) & 0 & b_{33}^{(1)}(t) & b_{33}^{(2)}(t) & 0
\end{pmatrix}.
\]
Hence,
\begin{equation} \label{eq:aux1}
\pazocal W^{(3)} \pazocal B = \begin{pmatrix}
b_{11}^{(1)} & b_{11}^{(2)} & 0 & b_{12}^{(1)} & b_{12}^{(2)} & 0 & b_{13}^{(1)} & b_{13}^{(2)} & 0 \\
b_{11}^{(1)} & b_{11}^{(2)} & 0 & b_{12}^{(1)} & b_{12}^{(2)} & 0 & b_{13}^{(1)} & b_{13}^{(2)} & 0 \\
b_{11}^{(1)} & b_{11}^{(2)} & 0 & b_{12}^{(1)} & b_{12}^{(2)} & 0 & b_{13}^{(1)} & b_{13}^{(2)} & 0 \\
b_{21}^{(1)} & b_{21}^{(2)} & 0 & b_{22}^{(1)} & b_{22}^{(2)} & 0 & b_{23}^{(1)} & b_{23}^{(2)} & 0 \\
b_{21}^{(1)} & b_{21}^{(2)} & 0 & b_{22}^{(1)} & b_{22}^{(2)} & 0 & b_{23}^{(1)} & b_{23}^{(2)} & 0 \\
b_{21}^{(1)} & b_{21}^{(2)} & 0 & b_{22}^{(1)} & b_{22}^{(2)} & 0 & b_{23}^{(1)} & b_{23}^{(2)} & 0 \\
b_{31}^{(1)} & b_{31}^{(2)} & 0 & b_{32}^{(1)} & b_{32}^{(2)} & 0 & b_{33}^{(1)} & b_{33}^{(2)} & 0 \\
b_{31}^{(1)} & b_{31}^{(2)} & 0 & b_{32}^{(1)} & b_{32}^{(2)} & 0 & b_{33}^{(1)} & b_{33}^{(2)} & 0 \\
b_{31}^{(1)} & b_{31}^{(2)} & 0 & b_{32}^{(1)} & b_{32}^{(2)} & 0 & b_{33}^{(1)} & b_{33}^{(2)} & 0
\end{pmatrix}, \end{equation} and \begin{equation} \label{eq:aux2}
\pazocal W^{(3)} V = \begin{pmatrix}
\lambda_2\lambda_3 V_1 - (\lambda_2 + \lambda_3)V_2 + V_3 \\
\lambda_3\lambda_1 V_1 - (\lambda_3 + \lambda_1)V_2 + V_3 \\
\lambda_1\lambda_2 V_1 - (\lambda_1 + \lambda_2)V_2 + V_3 \\
\lambda_2\lambda_3 V_4 - (\lambda_2 + \lambda_3)V_5 + V_6 \\
\lambda_3\lambda_1 V_4 - (\lambda_3 + \lambda_1)V_5 + V_6 \\
\lambda_1\lambda_2 V_4 - (\lambda_1 + \lambda_2)V_5 + V_6 \\
\lambda_2\lambda_3 V_7 - (\lambda_2 + \lambda_3)V_8 + V_9 \\
\lambda_3\lambda_1 V_7 - (\lambda_3 + \lambda_1)V_8 + V_9 \\
\lambda_1\lambda_2 V_7 - (\lambda_1 + \lambda_2)V_8 + V_9 \\
\end{pmatrix}.
\end{equation}
Note that $\pazocal W^{(3)} \pazocal B$ is a $9\times 9$ matrix with three blocks of three identical rows and  $\pazocal W^{(3)} V$ is a $9\times 1$ matrix with three blocks of rows having the same structure in $\lambda_1$, $\lambda_2$ and $\lambda_3$.

From \eqref{eq:aux1}, we deduce that 
\begin{equation*} \begin{aligned}
& |\pazocal W \pazocal B V|^2\prec \left( |b_{11}^{(1)}|^2 + |b_{21}^{(1)}|^2 + |b_{31}^{(1)}|^2 \right) |V_1|^2 \left( |b_{11}^{(2)}|^2 + |b_{21}^{(2)}|^2 + |b_{31}^{(2)}|^2 \right) |V_2|^2 \\
& \qquad \quad + \left( |b_{12}^{(1)}|^2 + |b_{22}^{(1)}|^2 + |b_{32}^{(1)}|^2 \right) |V_4|^2 + \left( |b_{12}^{(2)}|^2 + |b_{22}^{(2)}|^2 + |b_{32}^{(21)}|^2 \right) |V_5|^2 \\
& \qquad \quad + \left( |b_{13}^{(1)}|^2 + |b_{23}^{(1)}|^2 + |b_{33}^{(1)}|^2 \right) |V_7|^2 + \left( |b_{13}^{(2)}|^2 + |b_{23}^{(2)}|^2 + |b_{33}^{(21)}|^2 \right) |V_8|^2.
\end{aligned} \end{equation*} 
Taking inspiration from the Levi conditions in \cite{GR} and in analogy with the case $m=2$ we set
\begin{equation}\label{eq:condm3}\begin{aligned}
	|b_{11}^{(1)}|^2 +|b_{21}^{(1)}|^2+|b_{31}^{(1)}|^2 &\prec \lambda_1^2 \lambda_2^2 + \lambda_1^2\lambda_3^2 + \lambda_2^2 \lambda_3^2 \\
	|b_{12}^{(1)}|^2 +|b_{22}^{(1)}|^2+|b_{32}^{(1)}|^2 &\prec \lambda_1^2 \lambda_2^2 + \lambda_1^2\lambda_3^2 + \lambda_2^2 \lambda_3^2 \\
	|b_{13}^{(1)}|^2 +|b_{23}^{(1)}|^2+|b_{33}^{(1)}|^2 &\prec \lambda_1^2 \lambda_2^2 + \lambda_1^2\lambda_3^2 + \lambda_2^2 \lambda_3^2 \\
	|b_{11}^{(2)}|^2 +|b_{21}^{(2)}|^2+|b_{31}^{(2)}|^2 &\prec (\lambda_1+\lambda_2)^2 + (\lambda_1+\lambda_3)^2 + (\lambda_2+\lambda_3)^2 \\
	|b_{12}^{(2)}|^2 +|b_{22}^{(2)}|^2+|b_{32}^{(2)}|^2 &\prec (\lambda_1+\lambda_2)^2 + (\lambda_1+\lambda_3)^2 + (\lambda_2+\lambda_3)^2 \\
	|b_{13}^{(2)}|^2 +|b_{23}^{(2)}|^2+|b_{33}^{(2)}|^2 &\prec (\lambda_1+\lambda_2)^2 + (\lambda_1+\lambda_3)^2 + (\lambda_2+\lambda_3)^2.
\end{aligned} \end{equation} 
Note that $\lambda_1^2 \lambda_2^2 + \lambda_1^2\lambda_3^2 + \lambda_2^2 \lambda_3^2 $ and $(\lambda_1+\lambda_2)^2 + (\lambda_1+\lambda_3)^2 + (\lambda_2+\lambda_3)^2$ are the entries $q_{11}$ and $q_{22}$ of the symmetriser of $A_0=\lara{\xi}^{-1}A$, respectively. By imposing these conditions on the lower order terms we have that
\begin{equation} \label{eq:Aux1}
\begin{aligned}
& |\pazocal W^{(3)} \pazocal B V|^2\prec \left( \lambda_1^2 \lambda_2^2 + \lambda_1^2\lambda_3^2 + \lambda_2^2 \lambda_3^2 \right) (|V_1|^2 + |V_4|^2 + |V_7|^2) \\
& \quad + \left( (\lambda_1+\lambda_2)^2 + (\lambda_1+\lambda_3)^2 + (\lambda_2+\lambda_3)^2 \right) (|V_2|^2 + |V_5|^2 + |V_8|^2).
\end{aligned} \end{equation} 
Making a comparison with \cite{GR}, we observe that $V_1$, $V_4$, and $V_7$ play the role of $V_1$ in \cite{GR} and  $V_2$, $V_5$ and $V_8$ play the role of $V_2$ in \cite{GR}. Finally, from \eqref{eq:aux2}, we obtain that 
\begin{eqnarray*}
 |\pazocal W^{(3)} V|^2 &=& |\lambda_2\lambda_3 V_1 - (\lambda_2 + \lambda_3)V_2 + V_3|^2 + |\lambda_3\lambda_1 V_1 - (\lambda_3 + \lambda_1)V_2 + V_3|^2 \\
	&& \quad + |\lambda_1\lambda_2 V_1 - (\lambda_1 + \lambda_2)V_2 + V_3|^2
	+ |\lambda_2\lambda_3 V_4 - (\lambda_2 + \lambda_3)V_5 + V_6|^2 \\
	&& \quad + |\lambda_3\lambda_1 V_4 - (\lambda_3 + \lambda_1)V_5 + V_6|^2 + |\lambda_1\lambda_2 V_4 - (\lambda_1 + \lambda_2)V_5 + V_6|^2 \\
	&& \quad + |\lambda_2\lambda_3 V_7 - (\lambda_2 + \lambda_3)V_8 + V_9|^2 + |\lambda_3\lambda_1 V_7 - (\lambda_3 + \lambda_1)V_8 + V_9|^2 \\
	&& \quad + |\lambda_1\lambda_2 V_7 - (\lambda_1 + \lambda_2)V_8 + V_9|^2.
\end{eqnarray*} 
It is our aim to prove that $|\pazocal W^{(3)} \pazocal B V|^2\prec|\pazocal W^{(3)} V|^2$. We do this by estimating $|\pazocal W^{(3)} \pazocal B V|^2$ and $|\pazocal W^{(3)} V|^2$ in different zones. More precisely, inspired by \cite{GR} we decompose $\R^9$ as \begin{equation*}
\Sigma_1^{\delta_1} \cup (\Sigma_1^{\delta_1})^c,
\end{equation*} where  \begin{eqnarray*}
	&& \Sigma_1^{\delta_1} := \Big\{ V \in \R^9 : \sum_{1\leq i<j\leq3} (\lambda_i + \lambda_j)^2 (|V_2|^2 + |V_5|^2 + |V_8|^2) \\
	&& \qquad \qquad \qquad \qquad \qquad \leq \delta_1 \sum_{1\leq i<j\leq3} \lambda_i^2\lambda_j^2 (|V_1|^2 + |V_4|^2 + |V_7|^2) \Big\}
\end{eqnarray*} 
for some $\delta_1 >0$.

\paragraph{Estimate on $\Sigma_1^{\delta_1}$.} By definition of the zone, we obtain from \eqref{eq:Aux1} \begin{equation*}
|\pazocal W^{(3)} \pazocal B V|^2\prec \left( \lambda_1^2\lambda_2^2 +\lambda_2^2\lambda_3^2 + \lambda_1^2\lambda_3^2 \right)(|V_1|^2 + |V_4|^2 + |V_7|^2).
\end{equation*} Thanks to the hypothesis \eqref{eq:CondKinSp} on the eigenvalues, we have the following estimates\footnote{Using $|z_1|^2 + |z_2|^2 + |z_3|^2 \ge\frac{1}{2}( |z_1 - z_2|^2 + |z_1-z_3|^2 + |z_2 - z_3|^2)$, $z_1,z_2,z_3\in\C$.} 
\begin{eqnarray*}
	|\pazocal W^{(3)} V|^2 &\succ & |(\lambda_2\lambda_3 - \lambda_3\lambda_1)V_1 - (\lambda_2-\lambda_1)V_2 |^2 \\
	&& \qquad + |(\lambda_2\lambda_3-\lambda_1\lambda_2 )V_1 - (\lambda_3-\lambda_1)V_2|^2\\
	&& \qquad + |(\lambda_3\lambda_1 - \lambda_1\lambda_2)V_1 - (\lambda_3-\lambda_2)V_2|^2 \\
	 &\succ &  (\lambda_1^2 + \lambda_2^2)|\lambda_3V_1-V_2|^2 + (\lambda_3^2 + \lambda_1^2)|\lambda_2V_1-V_2|^2 \\  
	&& \qquad + (\lambda_2^2 + \lambda_3^2)|\lambda_1V_1-V_2|^2 \\
	&\succ & \lambda_1^2|(\lambda_3-\lambda_2)V_1|^2+\lambda_3^2|(\lambda_2-\lambda_1)V_1|^2\\
	&\succ & \left( \lambda_1^2\lambda_2^2 +\lambda_2^2\lambda_3^2 + \lambda_1^2\lambda_3^2 \right)|V_1|^2.
\end{eqnarray*} 
Note that in the previous bound from below we have taken in considerations only the terms with $V_1$, $V_2$ and $V_3$. Repeating the same arguments for the groups of terms with $V_4$, $V_5$, $V_6$ and $V_7$, $V_8$, $V_9$, respectively, we get that 
\[
|\pazocal W^{(3)} V|^2 \succ \left( \lambda_1^2\lambda_2^2 +\lambda_2^2\lambda_3^2 + \lambda_1^2\lambda_3^2 \right)|V_4|^2
\]
and
\[
|\pazocal W^{(3)} V|^2 \succ \left( \lambda_1^2\lambda_2^2 +\lambda_2^2\lambda_3^2 + \lambda_1^2\lambda_3^2 \right)|V_7|^2.
\]
Hence, 
\begin{equation*}
|\pazocal W^{(3)}V|^2 \succ \Big( \sum_{1\leq i<j\leq3} \lambda_i^2\lambda_j^2 \Big) (|V_1|^2 + |V_4|^2 + |V_7|^2).
\end{equation*} 
Thus, combining the last estimate with \eqref{eq:Aux1}, we obtain $|\pazocal W^{(3)} \pazocal B V|\prec |\pazocal W^{(3)} V|$ for all $V \in \Sigma_1^{\delta_1}$. No assumptions have been made on $\delta_1>0$.

\paragraph{Estimate on $(\Sigma^{\delta_1}_1)^c$.} By definition of the zone $(\Sigma^{\delta_1}_1)^c$, we obtain from \eqref{eq:Aux1} that   
\begin{equation} \label{eq:aux4}
|\pazocal W^{(3)} \pazocal B V|^2\prec \big( 1+\frac{1}{\delta_1} \big) \Big( \sum_{1\leq i<j\leq3} (\lambda_i+\lambda_j)^2 \Big) (|V_2|^2 + |V_5|^2 + |V_8|^2).
\end{equation} 
Further, by taking into considerations only the terms with $V_1$, $V_2$ and $V_3$ in $|\pazocal W^{(3)} V|^2$ we have 
\begin{eqnarray}
\nonumber	|\pazocal W^{(3)} V|^2 &=& |\lambda_2\lambda_3 V_1 - (\lambda_2 + \lambda_3)V_2 + V_3|^2 + |\lambda_3\lambda_1 V_1 - (\lambda_3 + \lambda_1)V_2 + V_3|^2 \\
\nonumber && \quad + |\lambda_1\lambda_2 V_1 - (\lambda_1 + \lambda_2)V_2 + V_3|^2 \\
\nonumber	&\succ& \gamma_1 \big( |(\lambda_2+\lambda_3)V_2-V_3|^2 + |(\lambda_3+\lambda_1)V_2-V_3|^2 \\
\label{eq:aux3}	&& \quad + |(\lambda_1+\lambda_2)V_2-V_3|^2 \big) - \gamma_2\big( \lambda_1^2\lambda_2^2 + \lambda_1^2\lambda_3^2 + \lambda_2^2\lambda_3^2 \big)|V_1|^2
\end{eqnarray} 
for some constant $\gamma_1, \gamma_2>0$ suitably chosen\footnote{Using $|z_1-z_2|^2 \geq \gamma_1|z_1|^2 - \gamma_2|z_2|^2$ with $\gamma_1=\frac{1}{2}$, $\gamma_2=1$.}. 
The hypothesis \eqref{eq:CondKinSp} implies
 \begin{eqnarray*}
	&&(\lambda_2-\lambda_1)^2 + (\lambda_3-\lambda_2)^2 + (\lambda_3-\lambda_1)^2 \geq \frac{2}{C} (\lambda_1^2 + \lambda_2^2 + \lambda_3^2) \\
	&& \qquad \geq \frac{1}{2C}\big( (\lambda_1 + \lambda_2)^2 + (\lambda_1 + \lambda_3)^2 + (\lambda_2+\lambda_3)^2 \big).
\end{eqnarray*} Applying the last inequality to \eqref{eq:aux3}, we obtain \begin{eqnarray*}
|\pazocal W^{(3)} V|^2 &\succ& \gamma_1 \big( |(\lambda_2+\lambda_3)V_2-V_3|^2 + |(\lambda_3+\lambda_1)V_2-V_3|^2 \\
&& \quad + |(\lambda_1+\lambda_2)V_2-V_3|^2 \big) - \gamma_2\big( \lambda_1^2\lambda_2^2 + \lambda_1^2\lambda_3^2 + \lambda_2^2\lambda_3^2 \big)|V_1|^2 \\
&\succ& \gamma_1 ((\lambda_2-\lambda_1)^2 + (\lambda_3-\lambda_2)^2 + (\lambda_3-\lambda_1)^2) |V_2|^2 \\
&& \quad - \gamma_2\big( \lambda_1^2\lambda_2^2 + \lambda_1^2\lambda_3^2 + \lambda_2^2\lambda_3^2 \big)|V_1|^2 \\
&\succ& \gamma_1' ( (\lambda_1 + \lambda_2)^2 + (\lambda_1 + \lambda_3)^2 + (\lambda_2+\lambda_3)^2)|V_2|^2 \\
&& \quad - \gamma_2\big( \lambda_1^2\lambda_2^2 + \lambda_1^2\lambda_3^2 + \lambda_2^2\lambda_3^2 \big)|V_1|^2. \\
\end{eqnarray*} 
Now, repeating the same argument for the terms involving $V_4, V_5, V_6$ and $V_7, V_8, V_9$, respectively, we get 
\[
|\pazocal W^{(3)} V|^2\succ \gamma_1' ( (\lambda_1 + \lambda_2)^2 + (\lambda_1 + \lambda_3)^2 + (\lambda_2+\lambda_3)^2)|V_5|^2 
 - \gamma_2\big( \lambda_1^2\lambda_2^2 + \lambda_1^2\lambda_3^2 + \lambda_2^2\lambda_3^2 \big)|V_4|^2
\]
and
\[
|\pazocal W^{(3)} V|^2\succ \gamma_1' ( (\lambda_1 + \lambda_2)^2 + (\lambda_1 + \lambda_3)^2 + (\lambda_2+\lambda_3)^2)|V_8|^2 
 - \gamma_2\big( \lambda_1^2\lambda_2^2 + \lambda_1^2\lambda_3^2 + \lambda_2^2\lambda_3^2 \big)|V_7|^2.
\]
It follows that for all $V\in (\Sigma_1^{\delta_1})^c$ the bound from below
\[
|\pazocal W^{(3)} V|^2 \succ \big(\gamma_1' - \frac{\gamma_2}{\delta_1} \big) \Big( \sum_{1\leq i<j\leq3} (\lambda_i+\lambda_j)^2 \Big) (|V_2|^2 +|V_5|^2 + |V_8|^2)
\]
holds, provided that $\delta_1$ is chosen large enough. Combining this with \eqref{eq:aux4}, we get $|\pazocal W^{(3)} \pazocal B V|\prec |\pazocal W V|$ on $(\Sigma_1^{\delta_1})^c$ and, thus, on $\R^9$.

\subsection{The general case} \label{sec:GeneralCaseB}

Recall from Section \ref{sec:StructBtxi} that the $m^2 \times m^2$ matrix $\pac B(t,\xi)$ is made up of $m$ matrices of dimension $m \times m^2$ that contain only in the last line non-zero elements, see \eqref{eq:LOTFOS}. To not further complicate the notation, we will in what follows denote $\pazocal W^{(m)}$ simply by $\pac W$ and will also assume that the $b_{ij}^{(l)}(t,\xi)$ in $\pac B(t,\xi)$ are properly scaled by $\lara{\xi}^{l-m}$. For that see Section \ref{sec:StructBtxi}, specifically formula \eqref{l_ij}. Thus, we have \begin{equation*}
	\pac B(t,\xi) = \begin{pmatrix}
	\pac B_1(t,\xi) \\ \vdots \\ \pac B_m(t,\xi)
	\end{pmatrix}, \quad B_i(t,\xi) = \begin{pmatrix}
	0 & 0 & \cdots & 0 \\
	\pac B^{(1)}_i(t,\xi)& \pac B^{(2)}_i(t,\xi) & \cdots & \pac B^{(m)}_i(t,\xi)
	\end{pmatrix}.
\end{equation*} The $\pac B_i(t,\xi)$ are then given by \begin{equation*}
	\pac B_i(t,\xi) = \left( b_{ij}^{(1)}(t,\xi), b_{ij}^{(2)}(t,\xi), \cdots, b_{ij}^{(m-1)}(t,\xi), 0 \right)
\end{equation*} for $1 \leq i \leq m$. Thus, we obtain \begin{equation} \label{eq:MatrixScale}
\pac W \pac B = \begin{pmatrix}
b_{11}^{(1)} & \cdots & b_{11}^{(m-1)} & 0 & \cdots & b_{1m}^{(1)} & \cdots & b_{1m}^{(m-1)} & 0 \\
\vdots &   & \vdots & \vdots &    &    &    & \vdots & \vdots \\
b_{11}^{(1)} & \cdots & b_{11}^{(m-1)} & 0 & \cdots & b_{1m}^{(1)} & \cdots & b_{1m}^{(m-1)} & 0 \\
b_{21}^{(1)} & \cdots & b_{21}^{(m-1)} & 0 & \cdots & b_{2m}^{(1)} & \cdots & b_{2m}^{(m-1)} & 0 \\
\vdots &   & \vdots & \vdots &  & \vdots &  & \vdots & \vdots \\
b_{21}^{(1)} & \cdots & b_{21}^{(m-1)} & 0 & \cdots & b_{2m}^{(1)} & \cdots & b_{2m}^{(m-1)} & 0 \\
\tbf{\vdots} &   & \tbf{\vdots} & \tbf{\vdots} &    &  \tbf{\vdots}  &    & \tbf{\vdots} & \tbf{\vdots} \\
b_{m1}^{(1)} & \cdots & b_{m1}^{(m-1)} & 0 & \cdots & b_{mm}^{(1)} & \cdots & b_{mm}^{(m-1)} & 0 \\
\vdots &   & \vdots & \vdots &  & \vdots &  & \vdots & \vdots \\
b_{m1}^{(1)} & \cdots & b_{m1}^{(m-1)} & 0 & \cdots & b_{mm}^{(1)} & \cdots & b_{mm}^{(m-1)} & 0 \\
\end{pmatrix}.
\end{equation}

We are now ready to prove the following theorem.

\begin{theorem} \label{thm:LOT} Let the entries of the matrix $\pac B(t,\xi)$ fulfill the conditions
 \begin{equation} \label{eq:CondLot}
		\sum_{k=1}^m|b_{kj}^{(l)}(t,\xi)|^2\prec \sum_{i=1}^m |\sigma_{m-l}^{(m-1)}(\pi_i \lambda)|^2
	\end{equation} for any $l=1,\dots, m-1$ and $j=1,\dots,m$. Then we have \begin{equation*}
		|\pac W \pac B V|\prec |\pac W V|
	\end{equation*} for all $V \in \C^{m^2}$. More precisely, we define \begin{equation} \begin{aligned}[l]
		& \Sigma_h^{\delta_h} := \Big\{ V \in \C^{m^2} : \quad \sum_{j=h+1}^{m-1} \sum_{i=1}^{m} |\sigma_{m-j}^{(m-1)}(\pi_i \lambda)|^2 \sum_{l=0}^{m-1} |V_{j+lm}|^2 \\ 
		& \qquad \qquad \qquad \qquad \leq \delta_h\sum_{i=1}^m |\sigma_{m-h}^{(m-1)} (\pi_i \lambda)|^2 \sum_{l=0}^{m-1} |V_{h+lm}|^2 \Big\}
	\end{aligned}
	\end{equation}  for $h = 1,\dots, m-2$. There exist suitable $\delta_h$, $h=1,\dots,m-2$ such that \begin{equation*}\begin{aligned}
	|\pac W \pac B V|^2 &\prec \sum_{i=1}^m |\sigma_{m-1}^{(m-1)}(\pi_i \lambda)|^2 \sum\limits_{l=0}^{m-1} |V_{1+lm}|^2 \\
	 |\pac W V|^2 &\succ \sum_{i=1}^m |\sigma_{m-1}^{(m-1)}(\pi_i \lambda)|^2 \sum\limits_{l=0}^{m-1} |V_{1+lm}|^2
	\end{aligned}\end{equation*} on $\Sigma_1^{\delta_1}$ and \begin{equation*}\begin{aligned}
		|\pac W \pac B V|^2 &\prec \sum_{i=1}^m |\sigma_{m-h}^{(m-1)}(\pi_i \lambda)|^2 \sum\limits_{l=0}^{m-1} |V_{h+lm}|^2 \\
		 |\pac W V|^2 &\succ \sum_{i=1}^m |\sigma_{m-h}^{(m-1)}(\pi_i \lambda)|^2 \sum\limits_{l=0}^{m-1} |V_{h+lm}|^2
		\end{aligned}\end{equation*} on $\big(\Sigma_1^{\delta_1}\big)^{\rm{c}} \cap\big(\Sigma_2^{\delta_2}\big)^{\rm{c}} \cap \cdots \cap \big(\Sigma_{h-1}^{\delta_{h-1}}\big)^{\rm{c}} \cap\Sigma_h^{\delta_h}$ for $2 \leq h \leq m-2$. Finally, \begin{equation*}\begin{aligned}
		|\pac W \pac B V|^2 &\prec \sum_{i=1}^m |\sigma_{1}^{(m-1)}(\pi_i \lambda)|^2 \sum\limits_{l=0}^{m-1} |V_{m-1+lm}|^2 \\
		|\pac W V|^2 &\succ \sum_{i=1}^m |\sigma_{1}^{(m-1)}(\pi_i \lambda)|^2 \sum\limits_{l=0}^{m-1} |V_{m-1+lm}|^2
		\end{aligned}\end{equation*} on $\big(\Sigma_1^{\delta_1}\big)^{\rm{c}} \cap\big(\Sigma_2^{\delta_2}\big)^{\rm{c}} \cap \cdots \cap \big(\Sigma_{m-2}^{\delta_{m-2}}\big)^{\rm{c}}$.
\end{theorem}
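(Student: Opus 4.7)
The plan is to follow the zone-decomposition strategy of Subsection \ref{lot_3_zone} and to prove two kinds of estimates in parallel: a uniform upper bound for $|\pac W \pac B V|^2$ coming from the Levi hypothesis \eqref{eq:CondLot}, and a zone-dependent lower bound for $|\pac W V|^2$ obtained by Vandermonde-type algebraic manipulations combined with the hyperbolicity condition \eqref{eq:CondKinSp}.

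For the upper bound I will use the explicit form \eqref{eq:MatrixScale} of $\pac W \pac B$: the entry $b_{kj}^{(l)}$ sits in column $l+(j-1)m$, and within each of the $m$ row-blocks the rows are identical. Cauchy--Schwarz together with \eqref{eq:CondLot} then yields, uniformly on $\C^{m^2}$,
\begin{equation*}
|\pac W \pac B V|^2 \prec \sum_{l=1}^{m-1} \Theta_l(\lambda)\, S_l(V), \quad \Theta_l := \sum_{i=1}^m |\sigma_{m-l}^{(m-1)}(\pi_i\lambda)|^2, \quad S_l := \sum_{k=0}^{m-1} |V_{l+km}|^2.
\end{equation*}
The zone definitions are tailored precisely for this bound: on $\Sigma_1^{\delta_1}$ the summands with $l\ge 2$ are dominated by $\delta_1 \Theta_1 S_1$, giving $|\pac W \pac B V|^2\prec \Theta_1 S_1$; on $(\Sigma_1^{\delta_1})^{\rm{c}}\cap\cdots\cap (\Sigma_{h-1}^{\delta_{h-1}})^{\rm{c}}\cap \Sigma_h^{\delta_h}$ the summands with $l>h$ are controlled by the last factor in the intersection and those with $l<h$ by $\frac{1}{\delta_l}\Theta_h S_h$ through the complement conditions, yielding $|\pac W \pac B V|^2\prec \Theta_h S_h$; and on the final zone $(\Sigma_1^{\delta_1})^{\rm{c}}\cap\cdots\cap(\Sigma_{m-2}^{\delta_{m-2}})^{\rm{c}}$ the same argument applies with $h=m-1$.

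The core of the proof is the matching lower bound $|\pac W V|^2 \succ \Theta_h S_h$, possibly modulo terms $\Theta_l S_l$ with $l<h$ that will be absorbed by the zone. Since $\pac W$ is block diagonal with $m$ identical blocks $W^{(m)}$, it suffices to prove, for every $k$, a single-block inequality on $W^{(m)}V^{(k)}$. Following the pattern of the $m=3$ case, I will form suitable iterated differences of the rows $W^{(m)}_i$: a single difference $W^{(m)}_i - W^{(m)}_j$ already factors as $(\lambda_i-\lambda_j)$ times a vector whose last entry vanishes, and Newton-type iterated differences of order $m-h$ annihilate the top $m-h$ entries, producing a linear form in $V^{(k)}_1,\dots,V^{(k)}_h$ whose coefficients are products of factors $(\lambda_i-\lambda_j)$ with elementary symmetric polynomials in the remaining eigenvalues. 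Squaring, summing over all admissible index choices and invoking the hyperbolicity bound $(\lambda_i-\lambda_j)^2\succ \lambda_i^2+\lambda_j^2$ will then produce $\Theta_h|V^{(k)}_h|^2$ as the leading contribution, together with error terms of the form $\Theta_l|V^{(k)}_l|^2$ with $l<h$. After summing over $k$, these errors are absorbed by an a posteriori choice of $\delta_1,\dots,\delta_{m-2}$ large enough; on $\Sigma_1^{\delta_1}$ no such errors arise, so this zone imposes no condition on the $\delta_h$'s.

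The main obstacle will be the combinatorial bookkeeping in this last step: identifying the correct family of iterated differences whose Vandermonde reduction produces a clean lower bound $\Theta_h|V^{(k)}_h|^2$, and tracking the residual cross terms so that they can be absorbed in the right order. Proposition \ref{prop_qs}(vi)--(vii), which links $\prod_{1\le i<j\le m}(\lambda_i-\lambda_j)^2$ and $q_{0,11}^{(m)}\cdots q_{0,mm}^{(m)}$ to symmetric polynomials in the $\lambda_i^2+\lambda_j^2$'s, is expected to play the decisive role in making the absorption step uniform in $(t,\xi)\in[0,T]\times\R^n$.
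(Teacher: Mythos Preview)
Your proposal is correct and follows essentially the same route as the paper: the upper bound $|\pac W\pac B V|^2\prec\sum_l\Theta_l S_l$ via Cauchy--Schwarz and \eqref{eq:CondLot}, the recursive zone bookkeeping to collapse this sum to a single term $\Theta_h S_h$, and the lower bound $|\pac W V|^2\succ\Theta_h S_h$ minus absorbable errors are exactly what the paper does.

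Two small points. First, the paper does not carry out the iterated-difference computation from scratch; it isolates the lower bound as a separate lemma (Lemma~\ref{lemma2}), proven by induction from the single-difference identity of Lemma~\ref{lemma1}, and then in the proof of the theorem simply splits $\sum_{j=1}^m=\sum_{j=1}^{h-1}+\sum_{j=h}^m$, applies $|a+b|^2\ge\gamma_1|a|^2-\gamma_2|b|^2$, and invokes Lemma~\ref{lemma2} on the tail. Your plan to run the iterated differences directly on the full sum is the same computation unpackaged; either way works. Second, your expectation that Proposition~\ref{prop_qs}(vi)--(vii) will be the decisive tool is off: those statements concern $\det Q_0^{(m)}$ and the product $q_{0,11}\cdots q_{0,mm}$, and they play no role here. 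The only external input needed for the lower bound is the pointwise hyperbolicity inequality $(\lambda_i-\lambda_j)^2\succ\lambda_i^2+\lambda_j^2$ from \eqref{eq:CondKinSp}, applied after each difference via Lemma~\ref{lemma1}.
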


Note that if $m=2$ no zone argument is needed to prove the theorem above (see Subsection \ref{lot_2_zone}) and when $m=3$ just one zone is needed (see Subsection \ref{lot_3_zone}). The proof of Theorem \ref{thm:LOT} has the same structure as the proof of Theorem 5 in \cite{GR} and requires some auxiliary lemmas.

\begin{lemma} \label{lemma1}
For all $i$ and $j$ with $1\le i,j\le m$ and $k=1,...,m-1,$ one has \begin{equation} \label{formula_diff} \begin{aligned}
& \sigma_{m-k}^{(m-1)}(\pi_i\lambda)-\sigma_{m-k}^{(m-1)}(\pi_j\lambda)\\
& \qquad = (-1)^{m-k}(\lambda_j-\lambda_i)
\sum_{\substack{i_h\neq i,\, i_h\neq j\\ 1\le i_1<i_2<\cdots<i_{m-k-1}\le m}} \lambda_{i_1}\lambda_{i_2}\cdots\lambda_{i_{m-k-1}}
\end{aligned} \end{equation}
\end{lemma}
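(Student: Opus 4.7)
The plan is to prove the identity by a direct combinatorial decomposition of each elementary symmetric polynomial according to whether the ``extra'' excluded index appears as a factor. Specifically, I would start from the defining formula
\[
\sigma_{m-k}^{(m-1)}(\pi_i\lambda)
= (-1)^{m-k}\!\!\sum_{\substack{1\le i_1<\cdots<i_{m-k}\le m\\ i_h\neq i}}\!\!\lambda_{i_1}\cdots\lambda_{i_{m-k}},
\]
and split the sum into two pieces: those $(m-k)$-tuples that contain the index $j$, and those that do not. Pulling out a factor $\lambda_j$ from the first piece, I obtain
\[
\sigma_{m-k}^{(m-1)}(\pi_i\lambda)=(-1)^{m-k}\lambda_j\!\!\!
\sum_{\substack{1\le i_1<\cdots<i_{m-k-1}\le m\\ i_h\neq i,\, i_h\neq j}}\!\!\!\lambda_{i_1}\cdots\lambda_{i_{m-k-1}}
+(-1)^{m-k}\!\!\!
\sum_{\substack{1\le i_1<\cdots<i_{m-k}\le m\\ i_h\neq i,\,i_h\neq j}}\!\!\!\lambda_{i_1}\cdots\lambda_{i_{m-k}}.
\]

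Next I would perform the analogous decomposition of $\sigma_{m-k}^{(m-1)}(\pi_j\lambda)$, this time splitting according to whether the index $i$ appears. The key observation is that the two ``residual'' sums, namely the sum in which neither $i$ nor $j$ appears among the indices $(i_1,\dots,i_{m-k})$, are literally the same expression in both cases, since both are sums over $(m-k)$-subsets of $\{1,\dots,m\}\setminus\{i,j\}$. Therefore these terms cancel upon subtraction, leaving only the contributions coming from the factors $\lambda_j$ and $\lambda_i$ respectively.

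Combining these, the difference reduces to
\[
\sigma_{m-k}^{(m-1)}(\pi_i\lambda)-\sigma_{m-k}^{(m-1)}(\pi_j\lambda)
=(-1)^{m-k}(\lambda_j-\lambda_i)\!\!\!
\sum_{\substack{1\le i_1<\cdots<i_{m-k-1}\le m\\ i_h\neq i,\,i_h\neq j}}\!\!\!\lambda_{i_1}\cdots\lambda_{i_{m-k-1}},
\]
which is exactly the claimed formula \eqref{formula_diff}. There is no serious obstacle here; the only care needed is the bookkeeping of the index ranges and the sign conventions from the definition \eqref{eq:DefSymPol}. Edge cases (for example $k=m-1$, where the residual sum is empty and the remaining sum reduces to the single empty product $1$) are automatically consistent with the convention $\sigma_0^{(m-1)}=1$, so no separate argument is required.
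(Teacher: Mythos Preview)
Your argument is correct: the splitting of each sum according to whether the distinguished index ($j$ in the first, $i$ in the second) occurs, followed by cancellation of the common ``neither $i$ nor $j$'' piece, yields exactly \eqref{formula_diff}, and your handling of the sign and of the degenerate case $k=m-1$ is accurate. The paper does not give an independent proof here but simply refers to \cite[Lemma~3]{GR}; your direct combinatorial decomposition is precisely the standard argument one would expect to find there, so there is no meaningful difference in approach.
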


\begin{proof} The proof can be found in \cite[Lemma 3]{GR}. $\blacksquare$ \end{proof}

\begin{lemma}
\label{lemma2}
For all $k=1,...,m$, we have \begin{equation} \label{formula_k}
\sum_{l=0}^{m-1} \sum_{i=1}^m\biggl|\sum_{j=k}^m \sigma_{m-j}^{(m-1)}(\pi_i\lambda) V_{j+lm} \biggr|^2
\succ
\sum_{i=1}^m|\sigma_{m-k}^{(m-1)}(\pi_i\lambda)|^2 \sum_{l=0}^{m-1}|V_{k+lm}|^2.
\end{equation}
\end{lemma}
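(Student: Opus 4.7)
My first step is to note that the outer summation over $l$ decouples the inequality into $m$ independent pieces. For each $l \in \{0, 1, \ldots, m-1\}$, the expression $\sum_{j=k}^m \sigma_{m-j}^{(m-1)}(\pi_i\lambda) V_{j+lm}$ involves only the components $V_{j+lm}$ with $j \in \{k, \ldots, m\}$, and these index blocks are pairwise disjoint across different values of $l$. Setting $W_j := V_{j+lm}$, both sides of \eqref{formula_k} split as the same sum over $l$ of expressions in the $W_j$'s, so it is enough to prove, uniformly in $l$, the scalar inequality
\[
\sum_{i=1}^m \biggl| \sum_{j=k}^m \sigma_{m-j}^{(m-1)}(\pi_i\lambda) W_j \biggr|^2 \succ \sum_{i=1}^m |\sigma_{m-k}^{(m-1)}(\pi_i\lambda)|^2 \, |W_k|^2.
\]
When $k = m$ this is trivial since $\sigma_0^{(m-1)} \equiv 1$ forces both sides to equal $m|W_m|^2$, so I may concentrate on $1 \le k \le m-1$.

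To establish the scalar inequality I would set $T_k(i) := \sum_{j=k}^m \sigma_{m-j}^{(m-1)}(\pi_i\lambda) W_j$ and study its pair differences using Lemma \ref{lemma1}, obtaining
\[
T_k(i_1) - T_k(i_2) = (\lambda_{i_1} - \lambda_{i_2}) \sum_{j=k}^m (-1)^{m-j+1} s_{i_1,i_2}^{(j)} W_j,
\]
where $s_{i_1,i_2}^{(j)}$ denotes the $(m-j-1)$-th elementary symmetric polynomial in the variables $\{\lambda_l : l \neq i_1, i_2\}$. Forming appropriate higher-order divided differences of the $T_k(i)$'s with respect to the $\lambda_i$'s (or, equivalently, inverting the Vandermonde-like submatrix $M_k$ of $W^{(m)}$ formed by its columns $k, k+1, \ldots, m$) eventually isolates $W_k$ multiplied by a product of differences $(\lambda_{i_p} - \lambda_{i_q})$. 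The Kinoshita--Spagnolo hypothesis \eqref{eq:CondKinSp}, used in the form $(\lambda_{i_1} - \lambda_{i_2})^2 \succ \lambda_{i_1}^2 + \lambda_{i_2}^2$, then converts each such product into a symmetric combination of $\lambda_l^2$'s, and a degree count matches this combination with $\sum_i |\sigma_{m-k}^{(m-1)}(\pi_i\lambda)|^2$ up to constants independent of $t,\xi$.

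The principal obstacle is the combinatorial bookkeeping: the first-order difference $T_k(i_1) - T_k(i_2)$ still depends on all of $W_k, W_{k+1}, \ldots, W_m$, and one must iterate the difference operation so that the contributions coming from $W_{k+1}, \ldots, W_m$ cancel. The fact that $M_k$ does have maximal rank for distinct $\lambda_i$'s (which justifies the inversion step) follows from Proposition \ref{prop_qs}(v)--(vi), since $\det Q_0^{(m)}(\lambda) = (m-1)! \prod_{i<j}(\lambda_i - \lambda_j)^2$. The scalar argument is otherwise the direct counterpart of the corresponding step in the proof of Theorem 5 of \cite{GR}, to which I defer for the complete combinatorial calculation.
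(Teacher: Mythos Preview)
Your proposal is correct and follows essentially the same approach as the paper. The paper's proof is equally terse: it says the result follows either by induction using Lemma~\ref{lemma1}, or by applying Lemma~4 of \cite{GR} block by block to the respective groups of $V_i$; your decoupling over $l$ and reduction to the scalar inequality is precisely this second route, and your deferral to \cite{GR} for the combinatorics is exactly what the paper does.
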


\begin{proof} The proof of this lemma follows by induction by applying Lemma \ref{lemma1} and can also be obtained by repeated application of Lemma 4 in \cite{GR} to the respective groups of $V_i$. $\blacksquare$ \end{proof}

\begin{proof} \emph{of Theorem \ref{thm:LOT}.}

By the definition of $\pac B$, we have that $|\pac W \pac B V|^2\prec |\pac WV|^2$ is equivalent to \begin{equation} \label{eq_est}
\begin{aligned}
\sum_{i=1}^m \biggl| \sum_{j=1}^{m-1}\sum_{l=1}^m b_{il}^{(j)} V_{j+(l-1)m} \biggr|^2\prec   \sum_{l=0}^{m-1} \sum_{i=1}^m \biggl|\sum_{j=1}^m \sigma^{(m-1)}_{m-j}(\pi_i\lambda)V_{j+lm} \biggr|^2.
\end{aligned}
\end{equation}

Making use of the Levi-type conditions \eqref{eq:CondLot}, we obtain \begin{eqnarray}
\nonumber \sum_{i=1}^m \biggl| \sum_{j=1}^{m-1}\sum_{l=1}^m b_{il}^{(j)} V_{j+(l-1)m} \biggr|^2 &\prec& \sum_{l=1}^m \sum_{j=1}^{m-1} \biggl( \sum_{i=1}^m |b_{il}^{(j)}|^2 \biggr) |V_{j+(l-1)m}|^2 \\
\label{eq:aux5} &\prec&  \sum_{j=1}^{m-1} \sum_{i=1}^m |\sigma_{m-j}^{(m-1)}(\pi_i \lambda)|^2 \sum_{l=0}^{m-1} |V_{j+lm}|^2.
\end{eqnarray}

On $\Sigma_1^{\delta_1}$, we further obtain the estimate \begin{equation*}
\sum_{i=1}^m \biggl| \sum_{j=1}^{m-1}\sum_{l=1}^m b_{il}^{(j)} V_{j+(l-1)m} \biggr|^2\prec (1+\delta_1) \sum_{i=1}^m |\sigma_{m-1}^{(m-1)}(\pi_i \lambda)|^2 \sum\limits_{l=0}^{m-1} |V_{1+lm}|^2.
\end{equation*}

Lemma \ref{lemma2} gives, setting $k=1$ in \eqref{formula_k} that \begin{equation*}
\sum_{l=0}^{m-1} \sum_{i=1}^m \biggl|\sum_{j=1}^m \sigma^{(m-1)}_{m-j}(\pi_i\lambda)V_{j+lm} \biggr|^2 \succ
\sum_{i=1}^m|\sigma_{m-1}^{(m-1)}(\pi_i\lambda)|^2 \sum_{l=0}^{m-1}|V_{1+lm}|^2.
\end{equation*}

This proves inequality \eqref{eq_est} in $\Sigma_1^{\delta_1}$. Now, we assume that $V \in (\Sigma_1^{\delta_1})^c \cap (\Sigma_2^{\delta_2})^c \cap \dots \cap (\Sigma_{h-1}^{\delta_{h-1}})^c \cap \Sigma_h^{\delta_h}$ for $2 \leq h \leq m-2$. From the definition of the zones for $1\le k\le h-1$ and $\delta_k\ge 1$, we obtain \begin{equation*}
\begin{aligned}
&\sum_{i=1}^m |\sigma^{(m-1)}_{m-(h-1)}(\pi_i\lambda)|^2 \sum_{l=0}^{m-1} |V_{h-1+lm}|^2 \\
& \qquad <\frac{1}{\delta_{h-1}}\biggl( \sum_{j=h+1}^{m-1}\sum_{i=1}^m|\sigma^{(m-1)}_{m-j}(\pi_i\lambda)|^2 \sum_{l=0}^{m-1}|V_{j+lm}|^2 \\
& \qquad \qquad +\sum_{i=1}^m|\sigma^{(m-1)}_{m-h}(\pi_i\lambda)|^2 \sum_{l=0}^{m-1}|V_{h+lm}|^2 \biggl) \\
& \qquad \le \frac{1}{\delta_{h-1}}(1+\delta_h)\sum_{i=1}^m|\sigma^{(m-1)}_{m-h}(\pi_i\lambda)|^2 \sum_{l=0}^{m-1} |V_{h+lm}|^2,
\end{aligned}
\end{equation*} as well as \begin{equation*}
\begin{aligned}[l]
& \sum_{i=1}^m |\sigma^{(m-1)}_{m-(h-2)}(\pi_i\lambda)|^2 \sum_{l=0}^{m-1} |V_{h-2+lm}|^2\\
& \qquad <\frac{1}{\delta_{h-2}}\biggl(\sum_{j=h+1}^{m-1}\sum_{i=1}^m|\sigma^{(m-1)}_{m-j}(\pi_i\lambda)|^2 \sum_{l=0}^{m-1} |V_{j+lm}|^2\\
& \qquad \qquad +\sum_{i=1}^m|\sigma^{(m-1)}_{m-h}(\pi_i\lambda)|^2 \sum_{l=0}^{m-1} |V_{h+lm}|^2 \\
& \qquad \qquad + \sum_{i=1}^m|\sigma^{(m-1)}_{m-(h-1)}(\pi_i\lambda)|^2 \sum_{l=0}^{m-1} |V_{h-1+lm}|^2 \biggl)\\
& \qquad \le\frac{1}{\delta_{h-2}}\big(1+\delta_h+\frac{1}{\delta_{h-1}}(1+\delta_h)\big) \sum_{i=1}^m|\sigma^{(m-1)}_{m-h}(\pi_i\lambda)|^2 \sum_{l=0}^{m-1} |V_{h+lm}|^2\\
& \qquad \le (1+\delta_h)\big(\frac{1}{\delta_{h-1}}+\frac{1}{\delta_{h-2}})\sum_{i=1}^m|\sigma^{(m-1)}_{m-h}(\pi_i\lambda)|^2 \sum_{l=0}^{m-1} |V_{h+lm}|^2.
\end{aligned}
\end{equation*}

Continuing these estimates recursively, we obtain that \begin{equation} \label{bound}
\begin{aligned}
& \sum_{i=1}^m |\sigma^{(m-1)}_{m-j}(\pi_i\lambda)|^2 \sum_{l=0}^{m-1} |V_{j+lm}|^2\\
& \qquad \qquad\prec (1+\delta_h)\sum_{k=1}^{h-1}\frac{1}{\delta_k}\sum_{i=1}^m|\sigma^{(m-1)}_{m-h}(\pi_i\lambda)|^2 \sum_{l=0}^{m-1} |V_{h+lm}|^2
\end{aligned}
\end{equation} for all $j$ with $1\le j \le h-1$ is valid on the zone $\big(\Sigma_1^{\delta_1}\big)^{\rm{c}} \cap\big(\Sigma_2^{\delta_2}\big)^{\rm{c}} \cap \cdots \cap \big(\Sigma_{h-1}^{\delta_{h-1}}\big)^{\rm{c}} \cap\Sigma_h^{\delta_h}$.

From \eqref{eq:aux5}, the estimate \eqref{bound} and the definition of the zone $\big(\Sigma_1^{\delta_1}\big)^{\rm{c}} \cap\big(\Sigma_2^{\delta_2}\big)^{\rm{c}} \cap \cdots \cap \big(\Sigma_{h-1}^{\delta_{h-1}}\big)^{\rm{c}} \cap\Sigma_h^{\delta_h}$ we get the following estimate of the left-hand side of \eqref{eq_est}:
\begin{equation*}
\begin{aligned}
&\sum_{i=1}^m \biggl| \sum_{j=1}^{m-1}\sum_{l=1}^m b_{il}^{(j)} V_{j+(l-1)m} \biggr|^2\prec  \sum_{j=1}^{m-1} \sum_{i=1}^m |\sigma_{m-j}^{(m-1)}(\pi_i \lambda)|^2 \sum_{l=0}^{m-1} |V_{j+lm}|^2 \\
& \quad\prec \sum_{j=h+1}^{m-1} \sum_{i=1}^m |\sigma_{m-j}^{(m-1)}(\pi_i\lambda)|^2 \sum_{l=0}^{m-1} |V_{h+lm}|^2 \\
& \qquad +\sum_{i=1}^m |\sigma_{m-h}^{(m-1)}(\pi_i\lambda)|^2 \sum_{l=0}^{m-1} |V_{h+lm}|^2
+\sum_{j=1}^{h-1}\sum_{i=1}^m |\sigma_{m-j}^{(m-1)}(\pi_i\lambda)|^2 \sum_{l=0}^{m-1} |V_{j+lm}|^2 \\
& \quad\prec \sum_{i=1}^m |\sigma_{m-h}^{(m-1)}(\pi_i\lambda)|^2 \sum_{l=0}^{m-1} |V_{h+lm}|^2.
\end{aligned}
\end{equation*}

Now, we have to estimate the right-hand side of \eqref{eq_est} on $\big(\Sigma_1^{\delta_1}\big)^{\rm{c}} \cap\big(\Sigma_2^{\delta_2}\big)^{\rm{c}} \cap \cdots \cap \big(\Sigma_{h-1}^{\delta_{h-1}}\big)^{\rm{c}} \cap\Sigma_h^{\delta_h}$. We make use of Lemma \ref{lemma2} and of the bound \eqref{bound}. We obtain \begin{equation*}
\begin{aligned}
& \sum_{l=0}^{m-1} \sum_{i=1}^m\biggl|\sum_{j=1}^m  \sigma^{(m-1)}_{m-j}(\pi_i\lambda)V_{j+lm}\biggr|^2\\
& \quad \succ \gamma_1 \sum_{l=0}^{m-1} \sum_{i=1}^m \biggl|\sum_{j=h}^{m}\sigma^{(m-1)}_{m-j}(\pi_i\lambda)  V_{j+lm}
\biggr|^2 -\gamma_2 \sum_{l=0}^{m-1} \sum_{i=1}^m \biggl|\sum_{j=1}^{h-1}\sigma^{(m-1)}_{m-j}(\pi_i\lambda)  V_{j+lm}
\biggr|^2\\
& \quad \succ \gamma_1 \sum_{l=0}^{m-1} \sum_{i=1}^m \biggl|\sum_{j=h}^{m}\sigma^{(m-1)}_{m-j}(\pi_i\lambda)  V_{j+lm}
\biggr|^2 \\
& \qquad \quad -\gamma_2 \sum_{i=1}^m \sum_{j=1}^{h-1} |\sigma^{(m-1)}_{m-j}(\pi_i\lambda)|^2 \sum_{l=0}^{m-1}  |V_{j+lm}|^2 \\
& \quad \succ \gamma_1  \sum_{i=1}^m|\sigma_{m-h}^{(m-1)}(\pi_i\lambda)|^2 \sum_{l=0}^{m-1} |V_{h+lm}|^2 \\
& \qquad \quad -\gamma_2
(1+\delta_h) \sum_{k=1}^{h-1}\frac{1}{\delta_k}\sum_{i=1}^m|\sigma^{(m-1)}_{m-h}(\pi_i\lambda)|^2\sum_{l=0}^{m-1}|V_{h+lm}|^2\\
& \quad =\biggl(\gamma_1-\gamma_2(1+\delta_h)  \sum_{k=1}^{h-1}\frac{1}{\delta_k}\biggr)
\sum_{i=1}^m|\sigma^{(m-1)}_{m-h}(\pi_i\lambda)|^2 \sum_{l=0}^{m-1}|V_{h+lm}|^2,
\end{aligned}
\end{equation*} where the second inequality follows from \begin{equation*}
	\sum_{l=0}^{m-1} \sum_{i=1}^m \biggl|\sum_{j=1}^{h-1}\sigma^{(m-1)}_{m-j}(\pi_i\lambda)  V_{j+lm}
	\biggr|^2 \leq (h-1) \sum_{i=1}^m \sum_{j=1}^{h-1} |\sigma^{(m-1)}_{m-j}(\pi_i\lambda)|^2 \sum_{l=0}^{m-1}  |V_{j+lm}|^2
\end{equation*} which follows from $|z_1 + \cdots + z_k| \leq k \sum_{i=1}^k |z_i|^2$. This yields estimate \eqref{eq_est} on the zone $\big(\Sigma_1^{\delta_1}\big)^{\rm{c}} \cap\big(\Sigma_2^{\delta_2}\big)^{\rm{c}} \cap \cdots \cap \big(\Sigma_{h-1}^{\delta_{h-1}}\big)^{\rm{c}} \cap\Sigma_h^{\delta_h}$ for any $\delta_h > 0$ provided that $\delta_1$, $\dots$, $\delta_{h-1}$ are chosen large enough.

The last step is assuming that $V \in \big(\Sigma_1^{\delta_1}\big)^{\rm{c}} \cap \big(\Sigma_2^{\delta_2}\big)^{\rm{c}} \cap \cdots \cap
\big(\Sigma_{m-2}^{\delta_{m-2}}\big)^{\rm{c}}$. Thus, from the definition of the $\Sigma^{\delta_h}$, we have \begin{equation} \label{eq:AuxProofLOT2} \begin{aligned}
&\sum_{j=h+1}^{m-1}\sum_{i=1}^{m-1} |\sigma^{(m-1)}_{m-j}(\pi_i\lambda)|^2 \sum_{l=0}^{m-1} |V_{j+lm}|^2 \\
& \qquad > \delta_h \sum_{i=1}^m|\sigma^{(m-1)}_{m-h}(\pi_i\lambda)|^2 \sum_{l=0}^{m-1} |V_{h+lm}|^2
\end{aligned} \end{equation} for $1 \leq h \leq m-2$. More precisely from the previous estimate we obtain $m-2$ inequalities starting with 
\begin{equation} \label{eq:AuxProofLOT1} \begin{aligned}
& \sum_{i=1}^m|\sigma^{(m-1)}_{m-1}(\pi_i\lambda)|^2 \sum_{l=0}^{m-1} |V_{1+lm}|^2 \\
& \qquad < \frac{1}{\delta_1} \sum_{j=2}^{m-1}\sum_{i=1}^{m-1} |\sigma^{(m-1)}_{m-j}(\pi_i\lambda)|^2 \sum_{l=0}^{m-1} |V_{j+lm}|^2,
\end{aligned} \end{equation} 
(where we put $h=1$ in \eqref{eq:AuxProofLOT2}) and ending with \begin{equation*} \begin{aligned}
& \sum_{i=1}^m|\sigma^{(m-1)}_{2}(\pi_i\lambda)|^2 \sum_{l=0}^{m-1} |V_{(m-2)+lm}|^2 \\
& \qquad < \frac{1}{\delta_{m-2}} \sum_{i=1}^{m-1} |\sigma^{(m-1)}_{1}(\pi_i\lambda)|^2 \sum_{l=0}^{m-1} |V_{(m-1)+lm}|^2,
\end{aligned} \end{equation*} (where $h=m-2$ in \eqref{eq:AuxProofLOT2}). 
Using now the second of the inequalities, i.e. $h=2$ in \eqref{eq:AuxProofLOT2}, on the right hand side of \eqref{eq:AuxProofLOT1}, we get\begin{eqnarray*}
	&& \frac{1}{\delta_1} \sum_{j=3}^{m-1}\sum_{i=1}^{m-1} |\sigma^{(m-1)}_{m-j}(\pi_i\lambda)|^2 \sum_{l=0}^{m-1} |V_{j+lm}|^2 + \frac{1}{\delta_1} \sum_{i=1}^m |\sigma^{(m-1)}_{m-2}(\pi_i\lambda)|^2 \sum_{l=0}^{m-1} |V_{2+lm}|^2 \\
	&& \qquad \leq \left( \frac{1}{\delta_1} + \frac{1}{\delta_1} \frac{1}{\delta_2} \right) \sum_{j=3}^{m-1}\sum_{i=1}^{m-1} |\sigma^{(m-1)}_{m-j}(\pi_i\lambda)|^2 \sum_{l=0}^{m-1} |V_{j+lm}|^2.
\end{eqnarray*} Then using the remaining estimates for $h=3$ to $h=m-2$ recursively, we finally arrive at \begin{equation} \label{eq:AuxEst6} \begin{aligned}
&\sum_{i=1}^m |\sigma_{m-j}^{(m-1)}(\pi_i \lambda)|^2 \sum_{l=0}^{m-1} |V_{j+lm}|^2 \\
& \qquad \leq \sum_{h=1}^{m-2} \frac{1}{\delta_h} \sum_{i=1}^{m} |\sigma_{1}^{(m-1)}(\pi_i \lambda)|^2 \sum_{l=0}^{m-1} |V_{m-1+lm}|^2
\end{aligned}
\end{equation} for any $1 \leq j \leq m-2$, $\delta_h\geq 1$. From \eqref{eq:AuxEst6} and the Levi-type conditions we deduce that \begin{equation*}  \begin{aligned}
\sum_{i=1}^m \biggl| \sum_{j=1}^{m-1}\sum_{l=1}^m b_{il}^{(j)} V_{j+(l-1)m} \biggr|^2\prec \sum_{i=1}^m |\sigma_1^{(m-1)}(\pi_i \lambda)|^2 \sum_{l=0}^{m-1} |V_{m-1+lm}|^2
\end{aligned} \end{equation*} in $\big(\Sigma_1^{\delta_1}\big)^{\rm{c}} \cap \big(\Sigma_2^{\delta_2}\big)^{\rm{c}} \cap \cdots \cap
\big(\Sigma_{m-2}^{\delta_{m-2}}\big)^{\rm{c}}$.

Using Lemma \ref{lemma2}, we get \begin{equation*} \begin{aligned}
& \sum_{l=0}^{m-1} \sum_{i=1}^m\biggl|\sum_{j=1}^m  \sigma^{(m-1)}_{m-j}(\pi_i\lambda)V_{j+lm}\biggr|^2 \\
& \qquad = \sum_{l=0}^{m-1} \sum_{i=1}^m\biggl| \sum_{j=1}^{m-2}  \sigma^{(m-1)}_{m-j}(\pi_i\lambda)V_{j+lm} + \sum_{j=m-1}^{m}  \sigma^{(m-1)}_{m-j}(\pi_i\lambda)V_{j+lm}\biggr|^2 \\
& \qquad \succ \gamma_1 \sum_{l=0}^{m-1} \sum_{i=1}^m\biggl|\sum_{j=m-1}^m \sigma^{(m-1)}_{m-j}(\pi_i\lambda)V_{j+lm}\biggr|^2 - \gamma_2 \sum_{l=0}^{m-1} \sum_{i=1}^m\biggl| \sum_{j=1}^{m-2}  \sigma^{(m-1)}_{m-j}(\pi_i\lambda)V_{j+lm}\biggr|^2 \\
& \qquad \succ \gamma_1 \sum_{i=1}^m |\sigma_1^{(m-1)}(\pi_i \lambda)|^2 \sum_{l=0}^{m-1} |V_{m-1+lm}|^2 \\
& \qquad \qquad -\gamma_2 \sum_{l=0}^{m-1} \sum_{i=1}^m\biggl| \sum_{j=1}^{m-2}  \sigma^{(m-1)}_{m-j}(\pi_i\lambda)V_{j+lm}\biggr|^2.
\end{aligned} \end{equation*} The second term on the right-hand side of the last inequality can be estimated with \eqref{eq:AuxEst6} and we obtain \begin{equation*}
\sum_{l=0}^{m-1} \sum_{i=1}^m\biggl|\sum_{j=1}^m  \sigma^{(m-1)}_{m-j}(\pi_i\lambda)V_{j+lm}\biggr|^2 \succ \sum_{i=1}^m |\sigma_1^{(m-1)}(\pi_i \lambda)|^2 \sum_{l=0}^{m-1} |V_{m-1+lm}|^2
\end{equation*} provided that the $\delta_h$, $1\leq h\leq m-2$ are chosen large enough. Thus \eqref{eq_est} holds on the zone $\big(\Sigma_1^{\delta_1}\big)^{\rm{c}} \cap \big(\Sigma_2^{\delta_2}\big)^{\rm{c}} \cap \cdots \cap
\big(\Sigma_{m-2}^{\delta_{m-2}}\big)^{\rm{c}}$ and the proof of Theorem \ref{thm:LOT} is complete. 
$\blacksquare$
\end{proof}

\section{Well-posedness results} \label{sec_final}

In this section we prove our main result: the well-posedness of the Cauchy problem \eqref{CauchyP}. We formulate the following theorem by adopting the language and the notations of the previous sections concerning the lower order terms. A different formulation will be given in Theorem \ref{theo_GJ_2}. Note that Theorem \ref{theo_GJ} and Theorem \ref{theo_GJ_2} correspond to Theorem \ref{theo_GJ_intro} and Theorem \ref{theo_GJ_2_intro}, respectively.
\begin{theorem}
\label{theo_GJ}
Let $A(t,D_x)$, $t\in[0,T]$, $x\in\R^n$, be an $m\times m$ matrix of first order differential operators with $C^\infty$-coefficients. Let $A(t,\xi)$ have real eigenvalues satisfying condition \eqref{eq:CondKinSp}. Let 
\begin{equation*}
\left\{ \begin{array}{ll}
& D_t u - A(t,D_x)u = 0,~(t,x) \in [0,T] \times \R^n\\
& \left. u \right|_{t=0} = u_0,~x \in \R^n
\end{array} \right.
\end{equation*}
be the Cauchy problem \eqref{CauchyP}. Assume that the Cauchy problem \eqref{eq:TrafoSys},
\[
	\left\{ \begin{array}{l}
	D_t U = \pazocal A(t,D_x) U + \pazocal B(t,D_x)U, \\
	\left. U \right|_{t=0} = U_0=(U_{0,1}, \cdots, U_{0,m})^T,
	\end{array} \right.
\]
obtained from \eqref{CauchyP} by block Sylvester reduction as in Section \ref{sec:BlockDiagonal} has the lower order terms matrix $ \pazocal B(t,D_x)$  fulfilling the Levi-type conditions \eqref{eq:CondLot}. Hence, for all $s\ge 1$ and for all $u_0\in \gamma^s(\R^n)^m$ there exists a unique solution $u\in C^1([0,T], \gamma^s(\R^n))^m$ of the Cauchy problem \eqref{CauchyP}.
\end{theorem}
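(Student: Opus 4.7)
The proof assembles the machinery of Sections \ref{SEC:qs}--\ref{sec_lot} into an energy inequality for the Fourier-transformed system \eqref{eq:FTSysTrafo}, then optimises in the parameter $\eps$ of the quasi-symmetriser to produce Gevrey bounds on $V(t,\xi)$. By construction of the block Sylvester reduction of Section \ref{sec:BlockDiagonal}, these bounds translate into Gevrey bounds on $U$ and, via $u_k = \Dx^{-(m-1)} U_{k,1}$ for $k = 1,\dots,m$, into the desired regularity of the solution $u$ to \eqref{CauchyP}.

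Plugging the three estimates obtained in Subsections \ref{first_term}--\ref{third_term} into \eqref{eq:EE}, with the third term controlled by the Levi hypothesis \eqref{cond:LeviPrec} through Theorem \ref{thm:LOT} and \eqref{eq:cB}, one arrives at
\[
\partial_t E_\eps(t,\xi) \leq K_\eps(t,\xi)\, E_\eps(t,\xi) + C_2\, \eps\, \lara{\xi}\, E_\eps(t,\xi) + C_3\, E_\eps(t,\xi),
\]
where $\int_0^T K_\eps(t,\xi)\,dt \leq C_1\, \eps^{-2(m-1)/k}$ uniformly in $\xi \in \R^n$ for every integer $k \geq 1$ (with $C_1$ depending on $k$). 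Gronwall's inequality combined with the two-sided bound \eqref{eq:p2} then yields
\[
|V(t,\xi)|^2 \leq C_m^2\, \eps^{-2(m-1)} \exp\bigl(C_1\, \eps^{-2(m-1)/k} + C_2 T\, \eps\, \lara{\xi}\bigr)\, |V(0,\xi)|^2.
\]
The two $\eps$-dependent exponents in the bracket balance for $\eps = \lara{\xi}^{-k/(k+2(m-1))}$, which turns the amplification factor into $C\, \lara{\xi}^{m-1} \exp\bigl(C'\, \lara{\xi}^{2(m-1)/(k+2(m-1))}\bigr)$.

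Since $A(t,\xi)$ is $C^\infty$ in $t$, the quasi-symmetriser admits $t$-derivatives of every order and $k$ may be taken as large as needed; the choice $k \geq 2(m-1)(s-1)$ makes the amplification at most $\exp\bigl(C''\, \lara{\xi}^{1/s}\bigr)$, which together with the Fourier characterisation of $\gamma^s$ (cf.\ \cite{GarRuz:1}) gives $U(t,\cdot) \in \gamma^s(\R^n)^{m^2}$ and hence $u(t,\cdot) \in \gamma^s(\R^n)^m$. In the borderline case $s=1$ the exponential decay of $\widehat{u_0}$ dominates the sub-exponential amplification, so analyticity is preserved. The initial vector $V(0,\xi)$ is built from $u_0$ by iterating $D_t u = A u$ to express $D_t^{j-1} u|_{t=0}$ in terms of $x$-derivatives of $u_0$ and $t$-derivatives of $A$ at $t=0$, and hence lies in $\gamma^s$; uniqueness, and the fact that the reconstructed $u$ actually satisfies the original system, both follow from applying the same energy estimate to $w = D_t u - A u$, which vanishes together with its first $m-1$ $t$-derivatives at $t=0$ by the construction of $V(0,\xi)$. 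The principal delicate point is the uniformity in $\xi$ of the constants appearing in \eqref{eq:p2}--\eqref{eq:p3} and of $\|\pac Q_\eps(\cdot,\xi)\|_{C^k([0,T])}$, which is what permits the pointwise optimisation $\eps = \eps(\xi)$; this uniformity follows from the $0$-homogeneity in $\xi$ of the rescaled eigenvalues $\lambda_i(t,\xi)$ together with the near-diagonality of $\pac Q_\eps$ provided by Proposition \ref{prop_SM}.
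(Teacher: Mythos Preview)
Your argument is correct and follows the same route as the paper: combine the differential inequality \eqref{eq:EE} with the three bounds of Subsections \ref{first_term}--\ref{third_term}, apply Gronwall together with \eqref{eq:p2}, and optimise $\eps=\eps(\xi)$; you spell out the optimisation and the choice of $k$ explicitly, whereas the paper simply refers to \cite[Theorem~6]{GR} at that point. Two minor differences are worth noting. First, the paper does not treat $s=1$ via the energy method at all but cites \cite{Jannelli:analytic-CPDE-1984,Kajitani:analytic-CPDE-1986}; your direct ``sub-exponential vs.\ exponential'' argument is fine, but it relies on the finite-propagation-speed reduction to compactly supported $u_0$ (so that $\widehat{u_0}$ actually decays like $e^{-c\lara{\xi}}$), which the paper invokes explicitly and you leave implicit. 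Second, the paper takes \emph{existence} of a solution $u$ to \eqref{CauchyP} for granted from \cite{KY:06,Yu:05} and uses the energy estimate only to upgrade its regularity; consequently it never needs your closing step of checking that a solution $U$ of the transformed system reconstructs to a genuine solution of the original one, since $U$ is built from a pre-existing $u$ via \eqref{trans_T}.
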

\begin{proof}
We assume $s>1$ since the case $s=1$ is known thanks to see
\cite{Jannelli:analytic-CPDE-1984} and \cite{Kajitani:analytic-CPDE-1986}. By the finite propagation speed for hyperbolic equations it is not restrictive to take compactly supported initial data and, therefore, to have the solution $u$ compactly supported in $x$. Note that if $u_0\in \gamma_c^s(\R^n)^m$ then by deriving the system in \eqref{CauchyP} with respect to $t$ we immediately have that $D_t^j u(0,x)\in\gamma_c^s(\R^n)^m$ for $j=1,\dots,m-1$. It follows that if $u$ solves \eqref{CauchyP} then $U$ defined in \eqref{trans_T} solves the Cauchy problem \eqref{eq:TrafoSys} with initial data $U_0\in\gamma_c^s(\R^n)^{m^2}$. We now prove that $U\in C^1([0,T],\gamma^s(\R^n))^{m^2}$. This will allow us to conclude that $u\in C^1([0,T], \gamma^s(\R^n))^m$. We recall that the Cauchy problem \eqref{eq:TrafoSys} is given by the system
\[
D_t U = \pazocal A(t,D_x) U + \pazocal B(t,D_x)U,
\] 
where $\pazocal A(t,\xi)$ is a block Sylvester matrix with $m$ identical blocks having the same eigenvalues of $A(t,\xi)$. We make use of the energy $E_\eps$ defined via the quasi-symmetriser in Section \ref{sec_energy}. Combining the energy estimate \eqref{eq:EE} with the estimates of the first, second and third term in Subsection \ref{first_term}, \ref{second_term} and \ref{third_term}, respectively,  we get
\begin{equation}\label{est-rev}
\partial_t E_\eps(t,\xi)\le (K_\eps(t,\xi)+C_2\eps\lara{\xi}+C_3)E_\eps(t,\xi),
\end{equation}
where $K_\eps(t,\xi)$ is defined in Subsection \ref{first_term}, the bound from above 
\[
\int_0^T K_\eps(t,\xi)\, dt\le C_1\eps^{-2(m-1)/k},
\]
holds for all $k\ge 1$ and $C_1, C_2, C_3$ are positive constants. Note that in the estimate \eqref{est-rev} we have used both the condition \eqref{eq:CondKinSp} on the eigenvalues and the Levi-type conditions \eqref{eq:CondLot}. Thanks to the reduction to block Sylvester form that we have applied to obtain the Cauchy problem \eqref{eq:TrafoSys}, we deal here with the same kind of energy employed in \cite{GR} for the scalar weakly hyperbolic equations of order $m$. The proof therefore continues as the proof of Theorem 6 in \cite{GR} with the only difference that $k$ can be taken arbitrary. This is due to the fact that the coefficients of the matrix $A(t,\xi)$ are $C^\infty$ with respect to $t$. It follows, by working on the Fourier transform level, that $U\in C^1([0,T],\gamma^s(\R^n))^{m^2}$ and therefore $u\in C^1([0,T], \gamma^s(\R^n))^m$.
$\blacksquare$
\end{proof}
We now formulate Theorem \ref{theo_GJ} with an additional condition on the matrix $A(t,\xi)$ which implies the Levi-type conditions \eqref{eq:CondLot}.  \begin{theorem}
\label{theo_GJ_2}
Let $A(t,D_x)$, $t\in[0,T]$, $x\in\R^n$, be an $m\times m$ matrix of first order differential operators with $C^\infty$-coefficients. Let $A$ have real eigenvalues satisfying condition \eqref{eq:CondKinSp} and let $Q=(q_{ij})$ be the symmetriser of $A_0=\lara{\xi}^{-1}A$. Assume that 
\begin{equation}
\label{cond_LOT_2}
\max_{k=1,\dots,m-1}\Vert D_t^k A_0(t,\xi)\Vert^2 \prec q_{j,j}(t,\xi)
\end{equation}
for all $(t,\xi) \in [0,T] \times \R^n$ and $j=1,\dots,m-1$. Hence, for all $s\ge 1$ and for all $u_0\in \gamma^s(\R^n)^m$ there exists a unique solution $u\in C^1([0,T], \gamma^s(\R^n))^m$ of the Cauchy problem \eqref{CauchyP}.
 
\end{theorem}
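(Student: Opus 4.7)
The plan is to derive Theorem~\ref{theo_GJ_2} from Theorem~\ref{theo_GJ} by proving that the hypothesis \eqref{cond_LOT_2} on $A_0$ implies the Levi-type conditions \eqref{eq:CondLot} on the matrix $\pac B(t,\xi)$ of lower order terms produced by the block Sylvester reduction. Once that implication is established, the well-posedness conclusion is an immediate consequence of Theorem~\ref{theo_GJ}.

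The first ingredient is to read off the diagonal of $Q=Q_0^{(m)}(\lambda)$. By Proposition~\ref{prop_qs}(v), $Q_0^{(m)}(\lambda)=(m-1)!\,W^{(m)}(\lambda)^\ast W^{(m)}(\lambda)$, whose $j$-th diagonal entry is the squared $\ell^2$-norm of the $j$-th column of $W^{(m)}(\lambda)$. Since the $j$-th entry of $W^{(m)}_i(\lambda)$ is precisely $\sigma_{m-j}^{(m-1)}(\pi_i\lambda)$ for $1\le j\le m-1$, we obtain
\[
q_{jj}(t,\xi)=(m-1)!\,\sum_{i=1}^{m}|\sigma_{m-j}^{(m-1)}(\pi_i\lambda)|^2,\qquad j=1,\dots,m-1,
\]
so \eqref{eq:CondLot} at index $l$ is equivalent to $\sum_{k=1}^{m}|b_{kj}^{(l)}|^2\prec q_{ll}$ for $j=1,\dots,m$.

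The second ingredient is a direct pointwise bound on the rescaled entries of $\pac B(t,\xi)$ through the explicit formula \eqref{eq:GoodLOTdef2},
\[
\tbf{B}_l(t,D_x)=\sum_{h'=0}^{m-l-1}\binom{m-1-h'}{l-1}\,\tbf{A}_{h'}(t,D_x)\,(D_t^{m-l-h'}A)(t,D_x).
\]
Writing $A=\lara{\xi}A_0$ and using \eqref{eq:AuxEqDef1}, the entries of $\tbf{A}_{h'}/\lara{\xi}^{h'}$ are polynomials in the rescaled eigenvalues $\lambda_i$ (bounded by hyperbolicity and smoothness of $A$) and in the bounded matrix $A_0$, multiplied by non-positive powers of $\lara{\xi}$; in particular they are uniformly bounded on $[0,T]\times\R^n$. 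Since likewise $D_t^{m-l-h'}A=\lara{\xi}\,D_t^{m-l-h'}A_0$, after incorporating the scaling factor $\lara{\xi}^{l-m}$ prescribed in Section~\ref{sec:StructBtxi} (cf.~\eqref{l_ij}), the overall $\xi$-weight of each summand becomes $\lara{\xi}^{l-m+h'+1}\le 1$, and one gets
\[
|b_{kj}^{(l)}(t,\xi)|\prec\sum_{h'=0}^{m-l-1}\|D_t^{m-l-h'}A_0(t,\xi)\|\prec \max_{1\le k'\le m-1}\|D_t^{k'}A_0(t,\xi)\|.
\]
Squaring, summing over $k$ and using \eqref{cond_LOT_2} with $j=l$ yields $\sum_{k=1}^m|b_{kj}^{(l)}|^2\prec q_{ll}$, which is exactly \eqref{eq:CondLot}.

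The main technical step is the scaling bookkeeping in the previous paragraph: one has to check carefully that the $\xi$-orders of $\tbf{A}_{h'}$ and $D_t^{m-l-h'}A$ combine with the overall normalisation $\lara{\xi}^{l-m}$ to give a power $\lara{\xi}^{l-m+h'+1}$ that is $\le 0$ for every admissible $h'$, so that only the single leading term $h'=m-l-1$ is of order zero (and the subleading ones are harmless), while the sum is dominated by $\max_{1\le k'\le m-1}\|D_t^{k'}A_0\|$. Everything else is the Gram-matrix identification of $q_{jj}$ and a direct application of Theorem~\ref{theo_GJ}.
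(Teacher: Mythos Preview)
Your proof is correct and follows essentially the same approach as the paper: bound the rescaled entries $b_{kj}^{(l)}$ of $\pac B$ by $\max_{k'}\|D_t^{k'}A_0\|$ via the explicit formula \eqref{eq:GoodLOTdef2}, then invoke \eqref{cond_LOT_2} to deduce the Levi-type conditions \eqref{eq:CondLot} and conclude by Theorem~\ref{theo_GJ}. Your version is in fact more detailed than the paper's, since you spell out the identification $q_{ll}=(m-1)!\sum_i|\sigma_{m-l}^{(m-1)}(\pi_i\lambda)|^2$ and the $\xi$-scaling bookkeeping that the paper leaves implicit.
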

\begin{proof}
From Proposition \ref{prop_B_lot} and Corollary \ref{cor:LOT} we have that \begin{equation*}
\sum_{k=1}^m|b_{kj}^{(l)}(t,\xi)|^2 \prec \max_{k=1,\dots,m-1}\Vert D_t^k A_0(t,\xi)\Vert^2
\end{equation*} for all $(t,\xi) \in [0,T] \times \R^n$ and $l=1,\dots, m-1$ and $j=1,\dots,m$. It follows that \eqref{cond_LOT_2} implies the Levi-type conditions \eqref{eq:CondLot} and therefore Theorem \ref{theo_GJ_2} follows from Theorem \ref{theo_GJ}.
$\blacksquare$
\end{proof}

It is clear that the hypothesis \eqref{cond_LOT_2} on the matrix $A_0=A\lara{\xi}^{-1}$ is in general stronger than the Levi-type conditions  \eqref{eq:CondLot}. However, in some cases  \eqref{eq:CondLot} and \eqref{cond_LOT_2} coincide as illustrated by the following examples.

\begin{example} In the special case $D_t^2 u - a(t)D_x^2 u = 0$ with $a(t) \geq 0$ and appropriate Cauchy data, the Levy-type condition is automatically satisfied for $a \in C^2[0,T]$. Indeed, with $a_{11}=0$, $a_{12}=1$, $a_{21}=a(t)$, and $a_{22} = 0$, condition \eqref{cond:LevytypeEx} becomes $|D_t a(t)| \leq C a(t)$ which is satisfied by Glaeser's inequality \cite{Glaeser}. \end{example}

\begin{example}
When $m=2$, the Levi-type conditions \eqref{eq:CondLot} imply \eqref{cond_LOT_2} (and therefore coincide with it). Indeed, as observed in Subsection \ref{lot_2_zone}, the Levi-type conditions are formulated as 
\begin{eqnarray*}
	(|D_t a_{11}(t)|^2 + |D_t a_{21}(t)|^2)\lara{\xi}^{-2}&\prec & \lambda_1^2(t,\xi) + \lambda_2^2(t,\xi), \\
	(|D_t a_{12}(t)|^2 + |D_t a_{22}(t)|^2)\lara{\xi}^{-2}&\prec& \lambda_1^2(t,\xi) + \lambda_2^2(t,\xi).
\end{eqnarray*} This implies 
\[
\Vert D_t A_0\Vert^2 \prec q_{1,1}
\]
which is condition \eqref{cond_LOT_2}.

\end{example}
\begin{example}
Let us now take a $3\times 3$ matrix $A$ with trace zero. For simplicity let us assume that $n=1$ and that the eigenvalues of the corresponding $A_0$ are $\lambda_1(t,\xi)=-\sqrt{a(t)}\xi\lara{\xi}^{-1}$, $\lambda_2(t,\xi)=0$ and $\lambda_3(t,\xi)=\sqrt{a(t)}\xi\lara{\xi}^{-1}$ with $a(t)\ge 0$ for $t\in[0,T]$. It follows that the hypothesis \eqref{eq:CondKinSp} on the eigenvalues is satisfied. By direct computations we get
\[
\begin{split}
q_{1,1}&=\lambda_1^2\lambda_2^2+\lambda_1^2\lambda_3^2+\lambda_2^2\lambda_3^2=a(t)\xi^2\lara{\xi}^{-2},\\
q_{2,2}&=(\lambda_1+\lambda_2)^2+(\lambda_1+\lambda_3)^2+(\lambda_2+\lambda_3)^2=2a(t)\xi^2\lara{\xi}^{-2}.
\end{split}
\]
It follows that both $q_{1,1}$ and $q_{2,2}$ are comparable to $a$ and therefore combining \eqref{eq:condm3} with \eqref{form_b_3} we conclude that
\[
\begin{split}
	|b^{(1)}_{kj}|^2 &= |D_t^2 a_{kj} + 2D_t a_{kj}|^2\prec a(t),\\
 |b^{(2)}_{kj}|^2& = |a_{k1}D_t a_{1j} + a_{k2}D_t a_{2j} + a_{k3}D_t a_{3j}|^2\prec a(t),
 \end{split}
\]
for $k=1,2,3$ and $j=1,2$.
We can easily see on the matrix
\[
A(t,\xi)=\begin{pmatrix}
0 & a(t) & 0\\
1 & 0 & 0\\
0 &  1 & 0
\end{pmatrix}\xi
\]
that the conditions above on the entries of $A$ entail 
\[
|D_t^k a(t)|^2\prec a(t)
\]
for all $t\in[0,T]$ and $k=1,2$, i.e. condition  \eqref{cond_LOT_2} . 
\end{example}

We now assume that the coefficients of the matrix $A(t,\xi)$ are analytic with respect to $t$. We will prove that in this case the Cauchy problem \eqref{CauchyP} with the same Levi-type conditions employed above is $C^\infty$ well-posed.

The proof of the $C^\infty$ well-posedness follows very closely the arguments in \cite{GR}. Thus, we will only give a sketch with the differences and refer the reader to the cited work for more details. We begin by recalling a lemma on analytic functions whose proof can be found in \cite{GR} (see Lemma 5 in \cite{GR}). 

\begin{lemma}  
\label{lem:Analytic} 
Let $f(t,\xi)$ be an analytic function in $t \in [0,T]$,  continuous and homogeneous of order $0$ in $\xi \in \R^n$. Then, \begin{enumerate}[{\quad (}i{)}]
		\item for all $\xi$ there exists a finite partition $(\tau_{h(\xi)})$ of the interval $[0,T]$ such that \begin{equation*}
			0 = \tau_0 < \tau_1 < \dots < \tau_{h(\xi)}< \dots < \tau_{N(\xi)} = T
		\end{equation*} with $\sup_{\xi \neq 0} N(\xi) < +\infty$, such that $f(t,\xi) \neq 0$ in each open interval $(\tau_{h(\xi)}, \tau_{(h+1)(\xi)})$;
		\item there exists a positive constant $C$ such that \begin{equation*}
			|\partial_t f(t,\xi)| \leq C \left( \frac{1}{t-\tau_{h(\xi)}} + \frac{1}{\tau_{(h+1)(\xi)}-t} \right) |f(t,\xi)|
		\end{equation*} for all $t \in (\tau_{h(\xi)}, \tau_{(h+1)(\xi)})$, $\xi \in \R^n \setminus \{0\}$ and $0 \leq h(\xi) \leq N(\xi)-1$.
	\end{enumerate}
\end{lemma}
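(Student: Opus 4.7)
The plan is to exploit joint analyticity of $f(t,\xi)$ (which in our applications follows because $f$ is analytic in $t$ with polynomial or analytic dependence on $\xi$, modulo the rescaling by $\lara{\xi}^{-h}$ that makes it homogeneous of degree $0$) together with the compactness gained from homogeneity. Since $f$ is homogeneous of degree zero in $\xi$, it suffices to study $f$ on the compact slab $[0,T]\times S^{n-1}$. For each $\xi\in S^{n-1}$ such that $f(\cdot,\xi)\not\equiv 0$, the function $t\mapsto f(t,\xi)$ is analytic on a complex neighbourhood of $[0,T]$, hence has finitely many zeros in $[0,T]$; these zeros provide the partition $0=\tau_0<\tau_1<\dots<\tau_{N(\xi)}=T$ required by (i).

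For part (i), the essential point is that $N(\xi)$ is bounded uniformly. I would cover the compact set $[0,T]\times S^{n-1}$ by finitely many neighbourhoods $U\times V$ on each of which Weierstrass preparation applies, yielding a factorisation
\[
f(t,\xi)=P(t,\xi)\,h(t,\xi),\qquad P(t,\xi)=(t-t_0)^k+a_{k-1}(\xi)(t-t_0)^{k-1}+\dots+a_0(\xi),
\]
with $h$ nonvanishing and analytic, and $P$ a distinguished polynomial of fixed degree $k$. In each such neighbourhood the number of $t$-zeros is at most $k$, so summing contributions from the finite subcover produces a uniform bound $\sup_\xi N(\xi)<+\infty$.

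For part (ii), the same local factorisation gives
\[
\frac{\partial_t f(t,\xi)}{f(t,\xi)}=\frac{\partial_t P(t,\xi)}{P(t,\xi)}+\frac{\partial_t h(t,\xi)}{h(t,\xi)}.
\]
The second summand is uniformly bounded on the neighbourhood since $h$ is nonvanishing and analytic on a compact set. For the first, a direct argument using the factorisation $P(t,\xi)=\prod_{j}(t-r_j(\xi))^{m_j}$ (with $\sum m_j\le k$) shows $|\partial_t P/P|\le C\sum_j(t-r_j(\xi))^{-1}$, and within a given zone $(\tau_{h(\xi)},\tau_{(h+1)(\xi)})$ the only roots of $f$ relevant to singular behaviour are the endpoints themselves, so the remaining $r_j(\xi)$ contribute bounded terms. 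Absorbing constants and the finite cover produces the claimed estimate.

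The main obstacle is making the covering argument genuinely uniform in $\xi$: the partition points $\tau_{h(\xi)}$ can depend discontinuously on $\xi$ (for example when two zeros collide), and the orders of vanishing can jump at such degeneracies. Weierstrass preparation is the right tool because it encodes the zeros of $f$ in the algebraic polynomial $P$ whose degree is locally constant, thereby trading the discontinuity of $\{\tau_{h(\xi)}\}$ for continuous dependence of coefficients $a_j(\xi)$. Showing that the constant $C$ in (ii) is independent of $\xi$ then reduces to the uniform bound on the degrees of the Weierstrass polynomials obtained from the finite subcover, together with uniform lower bounds on $|h|$ on each piece of the cover, both of which are standard consequences of compactness.
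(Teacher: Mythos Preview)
The paper does not give its own proof of this lemma; it simply cites \cite{GR} (Lemma~5 there). Your strategy---reduction to the compact slab $[0,T]\times S^{n-1}$ via homogeneity, local Weierstrass preparation, and a finite subcover---is the natural one and is almost certainly the argument in \cite{GR}. Two points deserve comment.

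\textbf{Hypotheses.} You rightly flag that Weierstrass preparation needs joint analyticity in $(t,\xi)$, whereas the lemma as stated only assumes continuity in $\xi$. In the paper's applications the relevant $f$ are entries of the quasi-symmetriser, hence symmetric polynomials in the eigenvalues, hence polynomials in the coefficients $\sigma_h^{(m)}(\lambda)$ of the characteristic polynomial of $A_0(t,\xi)$; these are analytic in $t$ and polynomial in $\xi/|\xi|$, so joint analyticity is available. Your disclaimer is therefore appropriate, but you should state clearly that you are proving the lemma under this stronger (and, for the paper, sufficient) hypothesis.

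\textbf{A genuine gap in part (ii).} You take the partition points $\tau_{h(\xi)}$ to be exactly the real zeros of $f(\cdot,\xi)$, and then assert that in a zone $(\tau_h,\tau_{h+1})$ ``the remaining $r_j(\xi)$ contribute bounded terms''. This is not correct uniformly in $\xi$. Consider $f(t,\xi)=(t-\tfrac12)^2+(\xi_1/|\xi|)^2$ on $[0,1]$: for $\xi_1\neq 0$ there are no real zeros, so your partition is $\{0,1\}$; yet $|\partial_t f/f|$ reaches order $|\xi|/|\xi_1|$ near $t=\tfrac12$, which is not bounded by $C(1/t+1/(1-t))$. The lemma does \emph{not} require the $\tau_{h(\xi)}$ to be zeros of $f$---only that $f\neq 0$ on the open subintervals---and the cure is to enlarge the partition: take as $\tau$'s the real parts of \emph{all} roots $r_j(\xi)$ of the local Weierstrass polynomial that fall in $[0,T]$ (together with $0$ and $T$). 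Since $|t-r_j|\ge |t-\operatorname{Re} r_j|$, each summand $m_j/|t-r_j|$ in $|\partial_t P/P|$ is then dominated by $m_j/|t-\tau_{k}|$ for the nearest partition point $\tau_k=\operatorname{Re} r_j$, and the total is controlled by $C\bigl(\tfrac{1}{t-\tau_h}+\tfrac{1}{\tau_{h+1}-t}\bigr)$ with $C$ depending only on the Weierstrass degree, hence uniformly via the finite cover. With this correction your argument goes through.
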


\begin{theorem}
\label{theo_GJ_anal}
	If all entries of $A(t,D_x)$ in \eqref{CauchyP} are analytic on $[0,T]$, the eigenvalues satisfy \eqref{eq:CondKinSp} and the entries of the matrix $\pac B(t,\xi)$ in \eqref{eq:TrafoSys} satisfy the Levi conditions \eqref{eq:CondLot} for $\xi$ away from $0$, then the Cauchy problem \eqref{CauchyP} is $C^\infty$ well-posed, i.e., for all $u_0\in C^\infty(\R^n)^m$ there exists a unique solution $u\in C^1([0,T], C^\infty(\R^n))^m$ of the Cauchy problem \eqref{CauchyP}.

\end{theorem}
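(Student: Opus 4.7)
The plan is to follow the same path as for Theorem \ref{theo_GJ} — block Sylvester reduction, quasi-symmetriser energy, the three-term decomposition \eqref{eq:EE} — and then sharpen the bound on the first term $\int_0^T K_\eps(t,\xi)\,dt$ using the analyticity assumption. Concretely, I carry out Steps 1--5 of Subsection \ref{sec:ResultScheme} verbatim to arrive at
\[
\partial_t E_\eps(t,\xi) \le \bigl( K_\eps(t,\xi) + C_2\,\eps\lara{\xi} + C_3 \bigr)\, E_\eps(t,\xi),
\]
with $K_\eps$ as in Subsection \ref{first_term}. The second term is handled by choosing $\eps=\lara{\xi}^{-1}$ (so $\eps\lara{\xi}$ is bounded), and the third is uniformly bounded by $C_3$ thanks to the Levi-type conditions \eqref{eq:CondLot} as in Section \ref{sec_lot}. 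The whole issue is therefore to show that with this choice of $\eps$, one has $\int_0^T K_\eps(t,\xi)\,dt \le C \log\lara{\xi} + C'$, which forces only a polynomial loss $\lara{\xi}^C$ in the energy and hence $C^\infty$ well-posedness by a standard Paley--Wiener / Fourier argument.

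To obtain the logarithmic bound on $\int_0^T K_\eps$, the key observation is that every entry of the quasi-symmetriser $\pac Q_\eps^{(m)}(t,\xi)$ is a symmetric polynomial in the eigenvalues $\lambda_i(t,\xi)$, hence a polynomial in the coefficients of the characteristic polynomial of $A(t,\xi)$, which are themselves analytic in $t$ (with $\lara{\xi}^{-1}$-homogeneous dependence on $\xi$). Therefore the scalar function
\[
f(t,\xi) := \det \pac Q_\eps^{(m)}(t,\xi),
\]
(or, more precisely, each of the diagonal entries $q_{\eps,jj}^{(m)}$ together with $\det \pac Q_\eps^{(m)}$) is analytic in $t\in[0,T]$ and $0$-homogeneous in $\xi$ after appropriate rescaling. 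I apply Lemma \ref{lem:Analytic} to produce a partition $0=\tau_0<\tau_1<\dots<\tau_{N(\xi)}=T$ with $\sup_\xi N(\xi)<\infty$, on whose open pieces $f$ does not vanish and satisfies $|\partial_t f|\prec(\frac{1}{t-\tau_h}+\frac{1}{\tau_{h+1}-t})|f|$.

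Next I decompose each interval $(\tau_h,\tau_{h+1})$ into a ``small'' part (of length $\lesssim \eps^{2/(m-1)}$) near each endpoint and a ``large'' middle part. On the middle part, the lower bound $\pac Q_\eps^{(m)} \ge c\,\eps^{2(m-1)}\,\mathrm{diag}\,\pac Q_\eps^{(m)}$ from the nearly-diagonal property (Proposition \ref{prop_SM}, Lemma \ref{lem_old}) together with Glaeser-type control coming from the analytic estimate in Lemma \ref{lem:Analytic}(ii) yields $K_\eps(t,\xi)\prec \frac{1}{t-\tau_h}+\frac{1}{\tau_{h+1}-t}$, which integrates to $C\log(1/\eps)$. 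On each small endpoint part, a cruder bound of the type $K_\eps \prec \eps^{-2(m-1)/k}$ (from Subsection \ref{first_term} with any $k\ge 1$) multiplied by an interval length $\lesssim \eps^{2/(m-1)}$ is bounded by a constant. Summing over the finitely many intervals gives $\int_0^T K_\eps(t,\xi)\,dt \le C\log(1/\eps)+C'$, and with $\eps=\lara{\xi}^{-1}$ this is $C\log\lara{\xi}+C'$ as required. This is exactly the scheme used in \cite{GR} for scalar equations, and the block-diagonal structure of $\pac Q_\eps^{(m)}$ (Remark \ref{rem:QuasiSym}) permits the argument to be performed blockwise.

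Putting the three estimates together, we get $E_\eps(t,\xi)\le \lara{\xi}^{C}\, E_\eps(0,\xi)$ uniformly in $t\in[0,T]$. By the coercivity $E_\eps(t,\xi)\ge C_m^{-1}\eps^{2(m-1)}|V(t,\xi)|^2$ with $\eps=\lara{\xi}^{-1}$, this translates into $|V(t,\xi)|\le \lara{\xi}^{N}|V(0,\xi)|$ for some $N$ depending only on $m$ and the constants above, which is precisely the bound needed for $C^\infty$ well-posedness of \eqref{eq:FTSysTrafo} (and hence of \eqref{CauchyP} via the transformation \eqref{trans_T}) by the standard Paley--Wiener characterisation of $C^\infty$ as in Theorem \ref{theo_GJ}. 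The hardest part of the argument is the construction of the analytic partition and the verification that the logarithmic estimate on $K_\eps$ genuinely survives the quasi-symmetriser setting; everything else is essentially the same as in the proof of Theorem \ref{theo_GJ}, specialised to arbitrarily large $k$ via the analyticity.
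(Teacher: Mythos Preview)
Your overall strategy---energy estimate, choice $\eps=\lara{\xi}^{-1}$, logarithmic control of $\int K_\eps$, polynomial Fourier loss---is exactly right and matches the paper. The gap is in your treatment of the ``small endpoint parts''. Subsection~\ref{first_term} does \emph{not} give a pointwise bound $K_\eps(t,\xi)\prec \eps^{-2(m-1)/k}$; it gives only the integral bound $\int_0^T K_\eps\,dt\le C_1\eps^{-2(m-1)/k}$ via Lemma~\ref{lem_new}, and that bound does not localise usefully to short subintervals. The only pointwise control one actually has is the crude $K_\eps\le C\eps^{-2(m-1)}$ coming from $(\pac Q_\eps^{(m)}V|V)\ge C_m^{-1}\eps^{2(m-1)}|V|^2$. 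With that, an endpoint interval of length $\delta$ contributes $C\delta\,\eps^{-2(m-1)}$ to $\int K_\eps$, which is bounded only if $\delta\lesssim\eps^{2(m-1)}$; but the middle-part estimate $\int_{\tau_h+\delta}^{\tau_{h+1}-\delta}K_\eps\,dt\le C\log(T/\delta)$ needs $\delta\gtrsim\eps$ in order to produce $C\log(1/\eps)$. For $m\ge 2$ these two requirements are incompatible, and your choice $\delta\sim\eps^{2/(m-1)}$ helps neither side.

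The paper resolves this not by estimating $K_\eps$ on the bad intervals but by \emph{switching energies} there: on each $\eps$-neighbourhood of a zero $\tau_{h(\xi)}$ one uses the Kovalevskayan energy $|V|^2$, for which the system itself gives $\partial_t|V|^2\le C\lara{\xi}|V|^2$ and hence $|V|^2\le e^{C\eps\lara{\xi}}|V(\text{start})|^2$, uniformly bounded when $\eps=\lara{\xi}^{-1}$. On the complementary middle pieces one uses $(\pac Q_\eps^{(m)}V|V)$ together with the nearly-diagonal property and Lemma~\ref{lem:Analytic}(ii) applied entrywise to obtain $\int K_\eps\le C\log(T/\eps)$, essentially as you describe. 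Each switch between the two energies costs a factor $\eps^{-2(m-1)}=\lara{\xi}^{2(m-1)}$ (from \eqref{eq:p2}), and since $N(\xi)$ is bounded by Lemma~\ref{lem:Analytic}(i), the total is a fixed power $\lara{\xi}^\kappa$. This Kovalevskayan/hyperbolic alternation, already used in \cite{GR,KS,Jannelli}, is the missing ingredient in your sketch.
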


\begin{proof}
Thanks to the finite propagation speed property it is not restrictive to assume that the initial data have compact support. By Remark \ref{rem:QuasiSym}, the entries of the quasi-symmetriser $\pac Q_\eps^{(m)}(t,\xi)$ are analytic in $t \in [0,T]$ and, using Proposition \ref{prop_qs}, can be written as \begin{equation} \label{eq:AuxProof1}
	q_{\eps,ij}(t,\xi) = q_{0,ij}(t,\xi) + \eps^2 q_{1,ij}(t,\xi) + \dots + \eps^{2(m-1)}q_{m-1,ij}(t,\xi).
\end{equation} We note that $q_{\eps,{(i+hm)(j+hm)}} = q_{\eps,ij}$, $h=0,\dots,m-1$ due to the block-diagonal structure of $\pac Q_\eps^{(m)}(t,\xi)$. Since all functions on the right hand side of \eqref{eq:AuxProof1} are analytic, we can use Lemma \ref{lem:Analytic} on each of them. Note that the partition $(\tau_{h(\xi)})$ in Lemma \ref{lem:Analytic} can be chosen independent from $\eps$.

Now, following \cite{GR,KS}, we use a Kovalevskayan-type energy near the points $\tau_{h(\xi)}$ and a hyperbolic-type energy on the rest of the interval $[0,T]$ (see also \cite{Jannelli}). We start with the interval $[0,\tau_{1}]$ ($\tau_1 = \tau_{1(\xi)}$), setting \begin{equation*}
	E_\eps(t,\xi) = \left\{ \begin{array}{lcl}
	|V(t,\xi)|^2 &\text{for}& t \in [0,\eps] \cup [\tau_1-\eps,\tau_1], \\
	\langle Q_\eps^{(m)}(t,\xi)V(t,\xi) | V(t,\xi)\rangle &\text{for}& t \in [\eps,\tau_1-\eps].
	\end{array} \right.
\end{equation*} The estimate on $[0,\eps] \cup [\tau_1-\eps,\tau_1]$ is standard and the details are left to the reader. We obtain, as in \cite{GR},
\begin{equation} \label{eq:AuxAnal1}
E_\eps(t,\xi) \leq \left\{ \begin{array}{lcl}
e^{2C \eps \lara{\xi}} E_\eps(0,\xi) &\text{for}& t \in [0,\eps] \\
e^{2C\eps\lara{\xi}} E_\eps(\tau_1-\eps,\xi) &\text{for}& t \in [\tau_1-\eps].
\end{array} \right.
\end{equation} 
On $[\eps, \tau_1-\eps]$, we get \begin{equation} \label{eq:AuxAnalytic1}
	\nonumber \partial_t E(t,\xi) \leq \left( \frac{|(\partial_t \pac Q^{(m)}_\eps V,V)|}{(\pac Q_\eps^{(m)} V| V)} + C_2 \eps \lara{\xi} + C_3\right) E_\eps(t,\xi),
\end{equation} where we used \eqref{eq:p3} (see (iii) in Proposition \ref{prop_qs}) and the Levi-type conditions \eqref{eq:CondLot} for $|\xi| \geq R$ to ensure that we have
\[
|((\pac{Q}_0^{(m)} B-B^\ast \pac{Q}_0^{(m)})V | V)| \leq C |\pac W^{(m)} V|^2 = (\pac Q^{(m)}_0 V| V),
 \]
see also \eqref{eq:cB} in Subsection \ref{third_term}. Thanks to Proposition \ref{prop_SM}, the family $\{\pac Q_\eps^{(m)}\}$ is nearly diagonal, when the eigenvalues $\lambda_l$, $l=1,\dots,m$ of $A$ satisfy \eqref{eq:CondKinSp}. Thus, we have $\pac Q_\eps \geq c_0 \diag(\pac Q_\eps^{(m)})$, i.e, \begin{eqnarray*}
 (\pac Q_\eps^{(m)} V | V) \geq c_0 \sum_{h=1}^m q_{\eps,hh} \sum_{l=0}^{m-1} |V_{h+lm}|^2
 = c_0\sum_{h=1}^{m^2} q_{\eps,hh}  |V_h|^2.
\end{eqnarray*} 
Using Proposition \ref{prop_qs} and the Cauchy-Schwarz inequality, we obtain \begin{equation*}
|q_{\eps,ij}||V_i||V_j| \leq \sum_{h=1}^{m^2} q_{\eps,hh}|V_h|^2.
\end{equation*} Together with Lemma \ref{lem:Analytic}, using the last two inequalities, we conclude that \begin{equation*}
\int\limits_{\eps}^{\tau_1-\eps} \frac{|(\partial_t \pac Q^{(m)}_\eps V,V)|}{(\pac Q_\eps^{(m)} V| V)} dt \leq \frac{1}{c_0} \int\limits_\eps^{\tau_1-\eps} \sum_{i,j=1}^{m^2} \frac{|\partial_t q_{ij}(t,\xi)|}{|q_{ij}(t,\xi)|} dt \leq C \log\left( \frac{T}{\eps} \right)
\end{equation*} for a certain positive constant $C$ not depending on $t$ and $\xi$. 
Thanks to the block diagonal form of the quasi-symmetriser, the proof now continues as the proof of Theorem 7 in \cite{GR}. This leads to the inequality
\begin{equation*}
|V(t,\xi)| \leq c \lara{\xi}^{N(\xi)(m-1)} e^{N(\xi)C_T} \lara{\xi}^{N(\xi)C_T},
\end{equation*}
obtained by setting $\eps=\lara{\xi}^{-1}$.
Lemma \ref{lem:Analytic} guarantees that the function $N(\xi)$ is bounded in $\xi$. Therefore, we can conclude that there exists a $\kappa \in \N$, depending only on $n$, $m$, and $T$ as well as a positive constant $C>0$ such that 
\[
|V(t,\xi)| \leq C \lara{\xi}^\kappa |V(0,\xi)|
\]
for all $t \in [0,T]$ and $|\xi| \geq R$. Clearly this estimate implies the $C^\infty$ well-posedness of the Cauchy problem \eqref{CauchyP}. $\blacksquare$
\end{proof}

\begin{remark}
\label{rem_reg}
Since the entries of the matrix $A$ are at least $C^\infty$ with respect to $t$  in both Theorem \ref{theo_GJ} and \ref{theo_GJ_anal}, from the system itself in \eqref{CauchyP} we obtain that the dependence in $t$ of the solution $u$ is actually not only $C^1$ but $C^\infty$.  \end{remark}
\begin{remark}
\label{rem_non_hom}
In this paper we have studied homogeneous systems. Our method, described in the previous sections, can be generalised to non-homogeneous systems with some technical work on the lower order terms. Key point is to investigate the relation of the matrix of the lower order terms in the original system with the matrix $\pac B$ obtained after reduction to block Sylvester form. 
\end{remark}

%

\appendix

\setcounter{section}{11}

\section{Some linear algebra auxiliary results} \label{sec:Appendix}

This appendix contains some general linear algebra results which have been employed throughout the paper. We start with  the following definition.
\begin{definition}[Adjunct/classical adjoint] \label{def:Adjunct}
	Let $A \in \R^{m \times m}$. Then, the adjunct (or classical adjoint) of $A$, denoted $\adj(A)$, is defined as the matrix consisting of the elements \begin{equation*}
		\adj(A)_{ij} = (-1)^{i+j} \det(A_{\hat{j}\hat{i}}),
	\end{equation*} where $\det(A_{\hat{j}\hat{i}})$ is the determinant of the $(m-1) \times (m-1)$ sub-matrix of $A$ obtained by deletion of row $j$ and column $i$. The adjunct matrix of $A$ is the transpose of the so-called cofactor matrix $\cof(A)$ of $A$.
\end{definition}

Further information about the adjunct may be found in \cite{HJ85}. By a straightforward application of the Laplace expansion formula for determinants \cite{HJ85}, one can prove the following proposition.
\begin{proposition} \label{prop:BasicAdj} Let $A \in \R^{m \times m}$, then, with the above definition, we have \begin{enumerate}[{\qquad(}i{)}]
		\item $\adj(A)A = A \adj(A) = \det(A)I_m$,
		\item $\adj(-A) = (-1)^{m-1}\adj(A)$,
		\item $\adj(A^T) = \adj(A)^T = \cof(A)$.
	\end{enumerate}
\end{proposition}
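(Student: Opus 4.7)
The proof is a direct calculation from Definition \ref{def:Adjunct} combined with the Laplace expansion formula, so the plan is simply to verify each identity entry-by-entry.

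For part (i), I would compute the $(i,j)$ entry of $\adj(A)A$ as
\[
\bigl(\adj(A)A\bigr)_{ij}=\sum_{k=1}^m\adj(A)_{ik}A_{kj}=\sum_{k=1}^m(-1)^{i+k}\det(A_{\hat k\hat i})A_{kj}.
\]
When $i=j$ this is exactly the Laplace expansion of $\det(A)$ along the $i$-th column, giving $\det(A)$. When $i\neq j$ the same expression equals the determinant of the matrix obtained from $A$ by replacing its $i$-th column by its $j$-th column; such a matrix has two identical columns, so the sum vanishes. Hence $\adj(A)A=\det(A)I_m$. The identity $A\adj(A)=\det(A)I_m$ follows by the symmetric argument using Laplace expansion along rows (or by applying the same computation to $A^T$ together with part (iii)).

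For part (ii), observe that each minor $\det((-A)_{\hat j\hat i})$ is the determinant of an $(m-1)\times(m-1)$ matrix obtained by scaling every row of $A_{\hat j\hat i}$ by $-1$, hence equals $(-1)^{m-1}\det(A_{\hat j\hat i})$. Multiplying by the sign $(-1)^{i+j}$ in Definition \ref{def:Adjunct} gives the claim.

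For part (iii), the key observation is the identity $(A^T)_{\hat j\hat i}=(A_{\hat i\hat j})^T$ at the level of submatrices, combined with $\det(M^T)=\det(M)$. Then
\[
\adj(A^T)_{ij}=(-1)^{i+j}\det((A^T)_{\hat j\hat i})=(-1)^{i+j}\det(A_{\hat i\hat j})=\adj(A)_{ji}=\bigl(\adj(A)^T\bigr)_{ij},
\]
and the equality $\adj(A)^T=\cof(A)$ is precisely the last sentence of Definition \ref{def:Adjunct}. There is no genuine obstacle here: the whole proposition is a bookkeeping exercise, and the only mildly delicate point is being careful with the row/column indices (which get swapped by the transpose in the definition) when verifying the submatrix identity in (iii).
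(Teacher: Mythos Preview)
Your proof is correct and is exactly the ``straightforward application of the Laplace expansion formula'' that the paper invokes without spelling out; the paper gives no further details beyond that remark, so your entry-by-entry verification is precisely what is intended.
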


\begin{remark} We note that the adjunct/cofactor of a matrix is not uniquely determined if the matrix is singular. Since we use only the relation \emph{(i)}, we mean by $\adj(A)$ a matrix associated to $A$ that satisfies \emph{(i)}, specified by \eqref{eq:AdjA}. For further details we refer to \cite{Beslin92,Wallace65}. \end{remark}

We recall that the elementary symmetric polynomials $\sigma_h^{(m)}(\lambda)$, $\lambda = (\lambda_1, \dots, \lambda_m)$, are defined by the formula \begin{equation*}
	\sigma_h^{(m)}(\lambda) = (-1)^h \sum_{1 \leq i_1 < i_2 <\dots < i_h \leq m} \lambda_{i_1} \cdot \dots \cdot \lambda_{i_h}
\end{equation*} for $1 \leq h \leq m$ and $\sigma_0^{(m)}(\lambda) = 1$. Using the definition of $\sigma_h^{(m)}(\lambda)$, we get \begin{equation} \label{eq:CharPolDef}
\prod_{h=1}^m (\tau-\lambda_h) = \sum_{h=0}^m \sigma_h^{(m)}(\lambda) \tau^{m-h} = \det(I_m \tau - A) = \sum_{h=0}^m c_h \tau^{m-h},
\end{equation} where $\lambda_1$, \dots, $\lambda_m$ are the eigenvalues of $A$ and $c_{h}=\sigma_h^{(m)}(\lambda)$ for $0 \leq h \leq m$. It is clear that \begin{equation*}
\sigma_1^{(m)}(\lambda) = c_{1} = -\tr(A), \quad \sigma_{m}^{(m)}(\lambda) = c_m = (-1)^m \det(A).
\end{equation*}

The next lemma plays a key role in Section \ref{sec:BlockDiagonal}.
 \begin{lemma} \label{lem:AdjunctRep} Let $A \in \R^{m \times m}$, then the following formulas hold true \begin{eqnarray}
		\label{eq:CH} && \det(A - A)  = A^m + c_{1} A^{m-1} + \dots + c_{m-1} A + c_m I_m = 0, \\
		\label{eq:AdjA} && \adj(A) = (-1)^{m-1} (A^{m-1} + c_{1}A^{m-2} + c_2 A^{m-3} + \dots + c_{m-1} I_m ), \\
		\label{eq:AdjSym}&& \adj(I_m\tau - A) = \sum_{h = 1}^{m} \left[ \sum_{h' = 0}^{h-1} c_{h'} A^{h-h'-1}  \right] \tau^{m-h}.
	\end{eqnarray}
\end{lemma}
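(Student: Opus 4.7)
The plan is to establish the three formulas in an order that exploits their interdependence, starting with the most general one, \eqref{eq:AdjSym}. The starting point is the fundamental identity $\adj(I_m\tau - A)(I_m\tau - A) = \det(I_m\tau - A)I_m$ from Proposition \ref{prop:BasicAdj}(i), now read as an equality of polynomials in $\tau$ with matrix coefficients (the identity $\adj(M)M = \det(M) I_m$ is polynomial in the entries of $M$, so it transfers to the polynomial ring $\R[\tau]$). Since $\adj(I_m\tau - A)$ is a polynomial in $\tau$ of degree at most $m-1$ by the cofactor formula, I would write
\[
\adj(I_m\tau - A) = \sum_{k=0}^{m-1} B_k\, \tau^{m-1-k}
\]
for matrices $B_k \in \R^{m \times m}$ to be determined.

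Next, I would expand the product $\bigl(\sum_{k=0}^{m-1} B_k \tau^{m-1-k}\bigr)(I_m\tau - A)$ and match coefficients of each power of $\tau$ with those of $\det(I_m\tau - A) = \sum_{h=0}^m c_h \tau^{m-h}$. This yields $B_0 = I_m$ (coefficient of $\tau^m$), the recursion $B_h = B_{h-1}A + c_h I_m$ for $1 \leq h \leq m-1$ (coefficient of $\tau^{m-h}$), and the terminal condition $-B_{m-1}A = c_m I_m$ (coefficient of $\tau^0$). A straightforward induction on $h$ solves the recursion to give $B_h = \sum_{h'=0}^h c_{h'} A^{h-h'}$ (using $c_0 = 1$). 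Reindexing via $h = k+1$ yields exactly \eqref{eq:AdjSym}.

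The Cayley--Hamilton identity \eqref{eq:CH} then follows by substituting the closed form $B_{m-1} = \sum_{h'=0}^{m-1} c_{h'} A^{m-1-h'}$ into the terminal condition $-B_{m-1}A = c_m I_m$, which rearranges to $\sum_{h=0}^{m} c_h A^{m-h} = 0$. Finally, \eqref{eq:AdjA} is obtained by evaluating \eqref{eq:AdjSym} at $\tau = 0$: only the $h = m$ term survives on the right, producing $A^{m-1} + c_1 A^{m-2} + \dots + c_{m-1} I_m$, while the left becomes $\adj(-A) = (-1)^{m-1}\adj(A)$ by Proposition \ref{prop:BasicAdj}(ii); multiplying through by $(-1)^{m-1}$ delivers the claim.

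There is no substantial obstacle. The only conceptual point requiring care is recognizing that the coefficient-matching step is legitimate, since the relation $\adj(M)M = \det(M) I_m$ is an identity of polynomials in the entries of $M$ and therefore remains valid when $M$ has entries in $\R[\tau]$ rather than $\R$; this avoids any need to invoke density arguments or to treat the singular case of \eqref{eq:AdjA} separately.
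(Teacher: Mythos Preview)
Your proposal is correct and follows essentially the same approach as the paper: both write $\adj(I_m\tau - A)$ as a polynomial in $\tau$ with undetermined matrix coefficients, invoke the identity $\adj(I_m\tau - A)(I_m\tau - A) = \det(I_m\tau - A)I_m$, and compare coefficients. The only cosmetic differences are that you solve the resulting recursion by a short induction (the paper uses telescoping sums obtained by multiplying the coefficient relations by powers of $A$) and that you derive \eqref{eq:AdjA} by evaluating \eqref{eq:AdjSym} at $\tau=0$ together with Proposition~\ref{prop:BasicAdj}(ii), whereas the paper reads it off directly from the constant-term relation $-B_0 A = c_m I_m$.
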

Note that formula \eqref{eq:CH} is just the well known Cayley-Hamilton theorem (see for instance \cite{HJ85}). The other two formulas follow from a variant of its proof.
\begin{proof} 
	We consider the product $\adj(I_m \tau - A)(I_m\tau-A)$. By Proposition \ref{prop:BasicAdj}, we have \begin{eqnarray} \label{eq:auxApAdj1}
		\adj(I_m \tau - A)(I_m\tau-A) &=& \det(I_m\tau-A)I_m
	\end{eqnarray} Since the entries of $\adj(I_m \tau - A)$ are, bey Definition \ref{def:Adjunct}, all polynomials of order $\leq m-1$ in $\tau$, we can collect the coefficients in matrices and write \begin{equation*}
	\adj(I_m \tau - A) = \sum_{h=1}^{m} B_{m-h}\tau^{m-h}.
\end{equation*} Plugging this into the left-hand-side of \eqref{eq:auxApAdj1}, we get \begin{equation*}
\sum_{h=1}^{m} B_{m-h}\tau^{m-h+1} - \sum_{h=1}^{m} B_{m-h}A \tau^{m-h} = \sum_{h=0}^m c_{h}I_m \tau^{m-h},
\end{equation*} where we use \eqref{eq:CharPolDef}. Thus, \begin{equation} \label{eq:ApAux2}
\tau^m B_{m-1} + \sum_{h=1}^{m-1} \tau^{m-h}(B_{m-h-1} - B_{m-h}A)  - B_0 A = \sum_{h=0}^m c_{h}I_m \tau^{m-h}. 
\end{equation} A comparison of the coefficients leads to: \begin{table}[H]
\centering
\begin{tabular}{c|c|cl}
	& Coeff. left-hand side \eqref{eq:ApAux2} & Coeff. right-hand side \eqref{eq:ApAux2} \\
	\hline
	$\tau^m$ & $B_{m-1}$ & $ c_0 I_m$  \\
	$\tau^{m-h}$ & $B_{m-h-1}-B_{m-h}A$ & $ c_{h}I_m$,~$1\leq h \leq m-1$\\
	$\tau^0$ & $-B_0A$ & $ c_m I_m$
\end{tabular}
\end{table} If one multiplies the coefficients of $\tau^{m-h}$ with $A^{m-h}$ for $0\leq h \leq m$ and sums them up for $h$ from $0$ to $m$, the sum over the middle column telescopes and adds up to zero which proves \eqref{eq:CH}. If we multiply the coefficients of $\tau^{m-h}$ by $A^{m-1-h}$ for $0 \leq h \leq m-1$, we get, summing up over $h$ from $0$ to $m-1$ that the middle column telescopes and leaves $B_0$. With the sum over the right column, we obtain \begin{equation*}
B_0  = \sum_{h = 0}^{m-1} c_{h}A^{m-1-h}.
\end{equation*}

By the comparison of coefficients, we obtained $-B_0 A = -AB_0 = c_m I_m = (-1)^m \det(A) I_m$, where the second equal sign can be proven by reversing the order of multiplication in \eqref{eq:auxApAdj1}. Thus, we have \begin{equation*}
\adj(A) = (-1)^{m-1} B_0 = (-1)^{m-1} \sum_{h = 0}^{m-1} c_{h}A^{m-1-h}.
\end{equation*} Hence, \eqref{eq:AdjA} is proven. Now we can obtain the $B_{i}$, $i=1,\dots,m$ by multiplying the coefficients of $\tau^{m-h}$ by $A^{m-(i+1)-h}$ for $0 \leq h \leq m-(i+1)$ and summing the equated middle and right column from $0$ to $m-(i+1)$, we obtain \begin{equation*}
B_i = \sum_{h=0}^{m-(i+1)} c_{h} A^{m-(i+1)-h}, 
\end{equation*} and, thus, \begin{equation*}
\adj(I_m \tau - A) = \sum_{h = 1}^{m} \left[ \sum_{h' = 0}^{h-1} c_{h'} A^{h-h'-1}  \right] \tau^{m-h}
\end{equation*} Hence we get \eqref{eq:AdjSym} and the lemma is proven. $\blacksquare$
\end{proof} 

\begin{example} We consider $m=2$. From \eqref{eq:AdjSym}, we have \begin{eqnarray*}
		\adj(I_2 \tau - A(t,\xi)) &=& \sum_{h=1}^{2} \left[ \sum_{h'=0}^{h-1} c_{h'} A^{h-h'-1} \right] \tau^{2-h} \\
		&=& c_0 \tau + (c_0 A + c_1 I_2) \tau^0 = I_2 \tau - \adj(A), 
	\end{eqnarray*} where we used the representation $\adj(A) = -(A - \tr(A) I_2)$ from formula \eqref{eq:AdjA} and $c_1 = \sigma_1^{(2)}(\lambda) = -\tr(A)$, $c_0 = \sigma_0^{(2)}(\lambda) = 1$. This also coincides with our computations in Section \ref{sec:ExCase2}. \end{example}

\begin{example} \label{ex:Adjm3} We consider $m=3$. Now we get \begin{eqnarray*}
		\adj(I_3 \tau - A(t,\xi)) &=& \sum_{h=1}^{3} \left[ \sum_{h'=0}^{h-1} c_{h'} A^{h-h'-1} \right] \tau^{3-h} \\
		&=& c_0 I_3 \tau^2 + (c_0 A + c_1 I_3) \tau + (c_0 A^2 + c_1 A + c_2 I_3) \tau^0 \\
		&=& I_3 \tau_2 + (A-\tr(A)I_3) \tau + \adj(A),
	\end{eqnarray*} where we used $\adj(A) = A^2 + c_1A + c_2I_3$ from \eqref{eq:AdjA} and the coefficients of the characteristic polynomial of $A$
	\begin{eqnarray*}
	 c_0 = 1, \quad c_1 = -\tr(A), \quad c_2 = a_{11}a_{22}+a_{11}a_{33} + a_{22}a_{33} - a_{12}a_{21} - a_{13}a_{31} - a_{23}a_{32}.
\end{eqnarray*} 
This result coincides with our computations in Section \ref{sec:ExCase3}. \end{example}

\end{document}